\numberwithin{equation}{section}
\newtheorem{thm}{Theorem}[section]
\newtheorem{Mthm}{Main Theorem}
\newtheorem{prop}[thm]{Proposition}
\newtheorem{cor}[thm]{Corollary}
\newtheorem{lem}[thm]{Lemma}
\newtheorem{de}[thm]{Definition}
\newtheorem{ex}[thm]{Example}
\def\ty{{\widetilde y}}
\def\w{\omega}
\def\bx{\bold x}
\def\av{\alpha^{\vee}}
\def\tw{\widetilde W}
\newcommand{\eqa}{\begin{eqnarray}}
\newcommand{\eeqa}{\end{eqnarray}}
\newcommand{\beq}{\begin{equation}}
\newcommand{\eeq}{\end{equation}}
\newcommand{\nn}{\nonumber}
\newcommand{\p}{\partial}
\newcommand{\ve}{\epsilon}
\newcommand{\lm}{\lambda}
\newcommand{\pal}{\partial}
\newcommand{\al}{\alpha}
\newcommand{\sg}{\sigma}
\def \ep{\epsilon}
 \def \dsum{\displaystyle\sum}
 \def\res{\mathop{\text{\rm res}}}
\begin{document}

\title[]
{Frobenius manifolds and a new class of  extended affine Weyl groups $\widetilde{W}^{(k,k+1)}(A_l)$ }
\author[]{Dafeng Zuo}


\address{School of Mathematical Science,
University of Science and Technology of China,
 Hefei 230026,
P.R.China}

\email{dfzuo@ustc.edu.cn}

\subjclass[2000]{Primary 53D45; Secondary 32M10}

\keywords{Extended affine Weyl group, Frobenius manifold}

\date{\today}

\begin{abstract} We present a new class of extended affine Weyl groups $\widetilde{W}^{(k,k+1)}(A_l)$ for
$1\leq k <l$ and obtain an analogue of Chevalley-type theorem for their invariants. We further show the existence
of Frobenius manifold structures  on the orbit spaces of $\widetilde{W}^{(k,k+1)}(A_l)$ and also construct
Landau--Ginzburg superpotentials for these Frobenius manifold structures.

\end{abstract}

\maketitle 
{\tableofcontents}
\newpage
\section{Introduction}

 E.Witten, R.Dijkgraaf, E.Verlinde
and H.Verlinde (\cite{W1990,DVV1991}) in 1990's introduced a remarkable system of
partial differential equations, i.e., WDVV equations of associativity,  on
two dimensional topological field theory (briefly 2dTFT). In order to understand a geometrical
foundation of 2dTFT on the bases of WDVV equations, B.Dubrovin (\cite{Du1992,Du1996}) extended
the Atiyah's axioms of 2dTFT (\cite{A1988}) and  invented
a nice geometrical object, that is, Frobenius manifold designed as a coordinate-free
formulation of WDVV equations.

 For an arbitrary $n$-dimensional Frobenius manifold, B.Dubrovin (\cite{Du1996}) defined a monodromy group, which acts
on $n$-dimensional  linear space and can be regarded as (an extension of) a group generated by reflections.
The Frobenius manifold itself can be identified with the orbit space of the group in the sense to be
specified for each class of monodromy groups, which gives a clue to understanding of Frobenius manifold
structure on the orbit space.
It was shown by B.Dubrovin (\cite{Du1996,Du1998}) that any finite Coxeter group can serve as
a monodromy group of a polynomial Frobenius manifold, that is to say, the potential is
a polynomial with respect to the flat coordinates $t^1,\cdots,t^l$. Furthermore, he put forward the
following conjecture,
{\it ``Any massive polynomial Frobenius manifold with positive invariant
 degrees is isomorphic to the orbit space of a finite Coxeter group",}
 which was proved by C.Hertling (\cite{Her2002}). Besides these, in \cite{Ber3,Z2007} it has been
shown that there are $l$ different Frobenius manifold structures on the orbit spaces of the
Coxeter groups $B_l$ and $D_l$. Especially, we also proved in \cite{Z2007} that the corresponding potentials are
meromorphic along the divisors $t^k=0$ for a given $1\leq k\leq l-2$, where $t^1,\dots,
t^{l}$ are the flat coordinates of the Frobenius manifold.

Let $R$ be an irreducible reduced root system defined in an
$l$-dimensional Euclidean space $V$ with Euclidean inner product
$(~,~)$, we fix a basis of simple roots $\al_1,\dots,\al_l$ and
denote by  $\alpha_j^{\vee},\ j=1,2,\cdots,l$ the corresponding
coroots. The Weyl group $W(R)$ is generated by the reflections
$
{\bx}\mapsto \bx-(\al_j^\vee,\bx)\al_j,\quad \forall\, {\bx}\in V,\ j=1,\dots,l.\nn
$
 The semi-direct product of $W(R)$ by the lattice of coroots yields the
affine Weyl group $W_a(R)$ that acts on $V$ by the affine
transformations \beq \bx\mapsto w(\bx)+\sum_{j=1}^l m_j\av_j,\quad
w\in W,\ m_j\in \mathbb{Z}. \nn \eeq
We denote by $\omega_1,\dots,\omega_l$ the fundamental weights  defined by the relations
\beq
(\omega_i,\al_j^\vee)=\delta_{ij},\quad i, j=1,\dots,l.\nn
\eeq

B.Dubrovin and Y.Zhang in \cite{DZ1998} (also \cite{DSZZ2019}) defined  {\it an extended affine Weyl group $\tw^{(k)}(R)$}, which acts on the extended space
$V\oplus \mathbb{R}$
and is generated by the transformations
\begin{equation}
x=(\bx,x_{l+1})\mapsto (w(\bx)+\sum_{j=1}^l m_j\av_j, \
x_{l+1}),\quad w\in W,\ m_j\in \mathbb Z,\nn
\end{equation}
and
\begin{equation}
x=(\bx,x_{l+1})\mapsto (\bx+\gamma\, \w_k,\ x_{l+1}-\gamma).\nn
\end{equation}
Here $1\le k\le l$, $\gamma=1$ except for the cases when $R=B_l, k=l$ and $R=F_4, k=3$ or $k=4$, in these three
cases $\gamma=2$. For a particular choice of a simple root $\al_k$, they proved an analogue of Chevalley theorem for their invariants.
On the orbit space of $\widetilde{W}^{(k)}(R)$, they constructed a Frobenius mainfold structure
whose potential is a weighted homogeneous polynomial of $t^1,\cdots, t^{l+1}, e^{t^{l+1}}$, where $t^1,\dots,
t^{l+1}$ are the flat coordinates of the Frobenius manifold.

 Observe that for the root system of type $A_l$, there is
in fact no restrictions on
the choice of $\al_k$. However, for the root systems of
type $B_l, C_l$, $D_l, E_6$, $E_7$, $E_8$, $F_4$, $G_2$ there is
only one choice for each. In \cite{SL1997} Slodowy pointed out that
the Chevalley type theorem is a consequence of the results of
Looijenga and Wirthm\"uller \cite{LO1,LO2,W1986}, and in fact it holds true for
any choice of the base element $\al_k$. A natural question is that
{\it `` Whether the geometric structures that were revealed by Dubrovin-Zhang's construction
 also exist on the orbit spaces of the extended affine
Weyl groups for an arbitrary choice of $\al_k$?"}
 Our recent work in \cite{DSZZ2019} is to give an affirmative
answer to this question for the root systems of type $B_l, C_l$ and also for $D_l$. We show, by fixing
another integer $0\le m\le l-k$, that on the corresponding orbit spaces there also exist Frobenius manifold
structures with potentials $F(t)$ that are weighted homogeneous polynomials  w.r.t $t^1,\cdots, t^{l+1},
\frac{1}{t^{l-m}}, \frac1{t^l}$, $e^{t^{l+1}}$. We also construct Landau--Ginzburg (briefly LG) superpotentials for these
Frobenius manifold structures.

In this paper we will present a new extension of affine Weyl groups denoted by $\tw^{(k,k+1)}(R)$, which is different from 
those in \cite{DZ1998,DSZZ2019}, and study Frobenius manifold structures on the corresponding orbit spaces $\mathcal{M}^{(k,k+1)}(R)$, where
the new extended affine Weyl groups $\widetilde{W}^{(k,k+1)}(R)$ act on the extended space
$V\oplus\mathbb{R}^2$ generated by the transformations
\beq\begin{array}{c}
 x=({\bf x},~x_{l+1},x_{l+2})\mapsto
 (w({\bf x})+\dsum_{j=1}^lm_j\alpha_j^{\vee},~x_{l+1},x_{l+2}),\quad
 w\in W,\quad m_j\in \mathbb{Z},\end{array}\nn \eeq
and
\begin{equation}
x=({\bf x},~x_{l+1},x_{l+2})\mapsto (\bx+\gamma\, \w_k,\ x_{l+1}-\gamma,\ x_{l+2})\nn
\end{equation}
and
\begin{equation}
x=({\bf x},~x_{l+1},x_{l+2})\mapsto (\bx+\gamma\, \w_{k+1},\ x_{l+1},\ x_{l+2}-\gamma)\nn
\end{equation}
Here $1\le k\le l-1$, $\gamma=1$ except for the cases when $R=B_l, k=l$ and $R=F_4, k=3$ or $k=4$, in these three
cases $\gamma=2$.

By a direct verification, we could not obtain any flat pencil of metrics and  Frobenius manifold structures
on the orbit spaces $\mathcal{M}^{(1,2)}(B_2)$,\, $\mathcal{M}^{(1,2)}(C_2)$ and $\mathcal{M}^{(1,2)}(G_2)$ etc.
We thus have to restrict our study to the type $A_l$ case, i.e. $\mathcal{M}^{(k,k+1)}(A_l)$ and will show that (see Theorem \ref{thm4.7})

\begin{Mthm}\label{mt1}
 For any fixed integer $1\le k<l$, there exists a unique Frobenius manifold structure
 of charge $d=1$ on the orbit
space $\mathcal{M}^{(k,k+1)}(A_l)\setminus\{\tilde{y}_{l+1}=0\}\cup\{\tilde{y}_{l+2}=0\}$ of $\widetilde{W}^{(k,k+1)}(A_l)$ such that

\begin{enumerate}
\item the invariant flat metric and the intersection form of the Frobenius manifold structure coincide
with the metrics $(\eta^{ij}(y))$ in \eqref{dfeta} and $(g^{ij}(y))$ in \eqref{df3.3} respectively;
\item the unity and the Euler vector fields have the form
\beq e=\dfrac{\p}{\p y^k}+\dfrac{\p}{\p y^{k+1}}\label{df1.1} \eeq
and
\begin{equation}E=\dsum_{\alpha=1}^{l}d_\alpha y^\alpha \dfrac{\p}{\p y^\alpha}
+\dfrac{1}{k}\dfrac{\p}{\p y^{l+1}}+\dfrac{1}{l-k}\dfrac{\p}{\p y^{l+2}}~,\label{df1.2}
\end{equation}
where $d_1,\dots,d_{l}$ are defined in \eqref{df2.11};

\item in the flat coordinates $t^1$, \dots, $t^{l+2}$ of the metric \eqref{dfeta} defined on certain covering of $\mathcal{M}^{(k,k+1)}(A_l)$
the potential of the Frobenius manifold structure is of the form  $F(t)=\widehat{F}(t)+\frac{1}{2}(t^{k+1})^2\log(t^{k+1})$, where $\widehat{F}(t)$ is
a weighted homogeneous polynomial in $t^1,t^2$, $\cdots, t^{l+2}$, $e^{t^{l+1}}$, $e^{t^{l+2}-t^{l+1}}$.\end{enumerate}
\end{Mthm}

On the orbit space of the extended affine Weyl group $\widetilde{W}^{(k)}(A_l)$, an alternative construction of the Frobenius manifold
structure was given in \cite{DZ1998}. This structure was given in terms of a LG superpotential construction. In particular, it was shown that
$\widetilde{W}^{(k)}(A_l)$ describes the monodromy of roots of trigonometric polynomials - the superpotential - with a given
bidegree being of the form
\beq \lambda(\varphi)=e^{{\bf i} k\varphi}+a_1e^{{\bf i} (k-1)\varphi}+\cdots+a_{l+1}e^{{\bf i} (k-l-1)\varphi},\quad a_{l+1}\ne 0.\nn\eeq
A natural question is that

``{\it Whether a similar construction about the Frobenius manifold structure exists  on the orbit space $\mathcal{M}^{(k,k+1)}(A_l)\setminus\{\tilde{y}_{l+1}=0\}\cup\{\tilde{y}_{l+2}=0\}$ of
 the extended affine Weyl group $\widetilde{W}^{(k,k+1)}(A_l)$?}\,"

 Let $\mathbb{M}_{k,l-k+1,1}$ be the space of a particular class of LG superpotentials consisting of trigonometric-Laurent series of one variable with tri-degree $(k+1,l-k,1)$,
these being functions of the form
$$\lambda(\varphi)=(e^{{\bf i}\varphi}-a_{l+2})^{-1}(e^{{\bf i}(k+1)\varphi}+a_1e^{{\bf i}k\varphi}+\cdots+a_{l+1} e^{{\bf i}(k-l)\varphi}), \quad
 a_{l+1}a_{l+2}\ne 0,$$
where $a_j \in \mathbb{C}$ for $j=1,\cdots, l+2$.
The space $\mathbb{M}_{k,l-k+1,1}$ carries a natural structure of Frobenius manifold. Its invariant inner product
$\eta$ and the intersection form $g$ of two vectors $\p'$, $\p''$
tangent to $\mathbb{M}_{k,l-k+1,1}$ at a point $\lambda(\varphi)$ can be defined by
the formulae \eqref{AM2} and \eqref{AM3}.
We will show that (see Theorem\,\ref{Main2})

\begin{Mthm}\label{mt2}
 The Frobenius manifolds $\mathcal{M}^{(k,k+1)}(A_l)\setminus\{\tilde{y}_{l+1}=0\}\cup\{\tilde{y}_{l+2}=0\}$ and  $\mathbb{M}_{k,l-k+1,1}$ are locally isomorphic.
\end{Mthm}

\section{A new class of extended affine Weyl groups $\widetilde{W}^{(k,k+1)}(A_l)$ }
\label{sec-2}

To keep self-contained, we recall some known facts about Weyl groups of type $A_l$,
see \cite{Bour} for details. Let $\mathbb{R}^{l+1}$ be a $(l+1)$-dimensional Euclidean space with Euclidean inner product
$(~,~)$ and an orthonormal basis
$\ep_1,\cdots,\ep_{l+1}$. Let $A_l$ be an irreducible
 reduced root system in the hyperplane $V=\left\{ \left. \dsum_{s=1}^{l+1}v_s\ep_s\in \mathbb{R}^{l+1}\right |\dsum_{s=1}^{l+1} v_{s}=0\right\}.$
 We fix a basis
 $$\al_1=\ep_1-\ep_2\, ,\cdots,\, \al_l=\ep_l-\ep_{l+1} $$
of simple roots. The corresponding coroots are
$\al_j^{\vee}=\al_j$ for $j=1,\cdots,l$.
The Weyl group  $W=W(A_l)$ is generated by the reflections
 \beq {\bx}\mapsto \bx-(\al_j^\vee,\bx)\al_j,\quad \forall\, {\bx}\in V,\
j=1,\dots,l. \eeq $W$ acts on $V$ by permutations of the
coordinates $v_1,\cdots,v_{l+1}$. The basic $W$-invariant Fourier
polynomials coincide with the elementary symmetric functions
 \beq
y_j(\bx)=\sigma_j(e^{2\pi i v_1},\cdots, e^{2\pi i v_{l+1}}),
\quad j=1,\cdots,l.\label{df1.2}\eeq

\begin{de}For any fixed integer $1\leq k<l$, we call $\widetilde{W}=\widetilde{W}^{(k,k+1)}(A_l)$ to be
an extended affine Weyl group of type $A$ if it acts on
\begin{equation*}\widetilde{V}= V\oplus\mathbb{R}^2\end{equation*} generated by the transformations
\beq\begin{array}{c}
 x=({\bf x},~x_{l+1},x_{l+2})\mapsto
 (w({\bf x})+\dsum_{j=1}^lm_j\alpha_j^{\vee},~x_{l+1},x_{l+2}),\quad
 w\in W,\quad m_j\in \mathbb{Z},\end{array}\eeq
 and
 \beq x=({\bf x},~x_{l+1},x_{l+2})\mapsto
({\bf x}+\omega_{k},~x_{l+1}-1, x_{l+2}),\label{df2.2} \eeq and
\beq x=({\bf x},~x_{l+1},x_{l+2})\mapsto ({\bf x}
+\omega_{k+1},~x_{l+1},x_{l+2}-1). \label{df2.3}\eeq
\end{de}

Coordinates $x_1,\cdots, x_l$ may be introduced on the space $V$ via the expression
 \beq {\bf x}=x_1\al_1^{\vee}+\cdots+ x_l\al_l^{\vee}.\eeq
 That is to say,
\beq v_1=x_1, \quad v_i=x_j-x_{j-1},\quad v_{l+1}=-x_{l},\quad ~j=2,\cdots, l.\eeq

\begin{de}\label{def2.1}$\mathcal{A}=\mathcal{A}^{(k,k+1)}(A_l)$ is the ring of
all $\widetilde{W}$-invariant Fourier polynomials of
$x_1,\cdots,x_l$, $\frac{1}{l+1}x_{l+1}$, $\frac{1}{l+1}x_{l+2}$ that
are bounded in the following limit conditions
\begin{equation} {\bf x}={\bf x}^0-i\omega_{k}\tau,~
x_{l+1}=x_{l+1}^0+i\tau, x_{l+2}=x_{l+2}^0,~\tau\to+\infty
\label{df2.6}\end{equation} and
 \begin{equation} {\bf x}={\bf
x}^0-i\omega_{k+1}\tau,~x_{l+1}=x_{l+1}^0, ~x_{l+2}=x_{l+2}^0+i\tau,~
\tau\to+\infty \label{df2.7}\end{equation}
 for any $x^0=({\bf x}^0,x_{l+1}^0,x_{l+2}^0)$.
\end{de}

Conditions \eqref{df2.6} and \eqref{df2.7} are essential for this construction as did in \cite{DZ1998,DSZZ2019}.
For simplicity, we introduce a set of numbers
\begin{equation}d_{j,k}:=(\omega_j,\omega_k)=\left\{
\begin{array}{ll}
\frac{j(l-k+1)}{l+1},\quad j=1,\cdots, k,\\
\frac{k(l-j+1)}{l+1},\quad j=k+1,\cdots, l
\end{array}\right.\label{df2.8}
\end{equation}
and define the following Fourier polynomials
\begin{equation}\begin{array}{l}\widetilde{y}_j(x)=e^{2\pi i(d_{j,k}\,x_{l+1}+d_{j,k+1}\,x_{l+2})}y_j({\bf
x}),\quad j=1,\cdots, l,\\
\widetilde{y}_{l+1}(x)=e^{2\pi ix_{l+1}},\quad
\widetilde{y}_{l+2}(x)=e^{2\pi ix_{l+2}}. \end{array}\label{df2.9}\end{equation}

\begin{lem}\label{lem2.3}(\cite{DZ1998}) For any fixed integer $1\leq k_r\leq l$, we have
\begin{equation}y_j({\bf x})=e^{2\pi d_{j,k_r}\tau}[y_{j}^{0,r}({\bf x^0})+\mathcal{O}
(e^{-2\pi\tau})], \quad \tau\to+\infty, \quad \mbox{for $j=1,\cdots,l$},\nn \end{equation} where ${\bf x}={\bf x}^0-i\omega_{k_r}\tau$  and
\begin{equation*}
y_j^{0,r}({\bf x^0})=\dfrac{1}{n_j}\dsum_{\tiny{w\in W,(w(\omega_j)-\omega_j,\omega_{k_r})=0}} e^{2\pi
i(w(\omega_j),~{\bf x^0})},\end{equation*}
where $n_j={\#\{w\in W|e^{2\pi i(\omega_j,w({\bf x}))}=e^{2\pi i(\omega_j,{\bf x})}\}}.$
Moreover, the Fourier polynomials
 $y_1^{0,r}({\bf x^0}),\cdots,y_l^{0,r}({\bf x^0})$ are algebraically independent.\end{lem}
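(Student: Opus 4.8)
The plan is to start from the fact that each basic invariant is an orbit character. Because $\omega_j$ is a minuscule fundamental weight of $A_l$, the weights of the $j$-th fundamental representation $\wedge^j\mathbb{C}^{l+1}$ form the single orbit $W\omega_j$, each with multiplicity one, and on the hyperplane $V$ the elementary symmetric function $y_j(\bx)=\sigma_j(e^{2\pi i v_1},\dots,e^{2\pi i v_{l+1}})$ is exactly
$$y_j(\bx)=\sum_{\mu\in W\omega_j}e^{2\pi i(\mu,\bx)}=\frac{1}{n_j}\sum_{w\in W}e^{2\pi i(w(\omega_j),\bx)},$$
the $j$-element subsets $S\subseteq\{1,\dots,l+1\}$ indexing the orbit via $\mu_S=\sum_{s\in S}\epsilon_s-\tfrac{j}{l+1}\sum_s\epsilon_s$. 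Here $n_j=|\mathrm{Stab}_W(\omega_j)|$ is the integer in the statement, since two characters $e^{2\pi i(\mu,\cdot)}$ and $e^{2\pi i(\nu,\cdot)}$ agree as functions on $V$ iff $\mu=\nu$, so the $w$ with $e^{2\pi i(\omega_j,w(\bx))}\equiv e^{2\pi i(\omega_j,\bx)}$ are precisely those fixing $\omega_j$. Substituting $\bx=\bx^0-i\omega_{k_r}\tau$ gives $e^{2\pi i(\mu,\bx)}=e^{2\pi i(\mu,\bx^0)}e^{2\pi(\mu,\omega_{k_r})\tau}$, so the behaviour as $\tau\to+\infty$ is governed by the rate $(\mu,\omega_{k_r})$. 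By the standard dominant-weight inequality $(w(\omega_j),\omega_{k_r})\le(\omega_j,\omega_{k_r})=d_{j,k_r}$ for all $w$ (both weights being dominant), the top rate is $d_{j,k_r}$, realized precisely by those $\mu=w(\omega_j)$ with $(w(\omega_j)-\omega_j,\omega_{k_r})=0$; collecting these reproduces the claimed leading coefficient $y_j^{0,r}(\bx^0)$.

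To pin down the remainder and make the leading term explicit I would record the elementary identity $(\mu_S,\omega_{k_r})=|S\cap\{1,\dots,k_r\}|-\tfrac{jk_r}{l+1}$. The admissible rates therefore differ by integers, with maximum $\min(j,k_r)-\tfrac{jk_r}{l+1}=d_{j,k_r}$ and next value $d_{j,k_r}-1$, which forces the uniform remainder $\mathcal{O}(e^{-2\pi\tau})$. The maximising subsets are: for $j\le k_r$, all $S\subseteq\{1,\dots,k_r\}$, whence $y_j^{0,r}=\sigma_j(z_1,\dots,z_{k_r})$; for $j>k_r$, the subsets containing $\{1,\dots,k_r\}$, whence $y_j^{0,r}=(z_1\cdots z_{k_r})\,\sigma_{j-k_r}(z_{k_r+1},\dots,z_{l+1})$, where $z_s:=e^{2\pi i v_s^0}$ and $\prod_s z_s=1$ because $\bx^0\in V$.

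The substantive step is algebraic independence, which I would obtain by showing the map $(z_1,\dots,z_{l+1})\mapsto(y_1^{0,r},\dots,y_l^{0,r})$ is dominant (generically finite) on the torus $\prod_s z_s=1$ via reconstruction. The first $k_r$ values, being $\sigma_1,\dots,\sigma_{k_r}$ of the block $z_1,\dots,z_{k_r}$, recover the monic polynomial $\prod_{s=1}^{k_r}(t-z_s)$, hence the multiset $\{z_1,\dots,z_{k_r}\}$ and in particular $P:=z_1\cdots z_{k_r}\ne0$ generically. Dividing the remaining $l-k_r$ values by $P$ recovers $\sigma_1,\dots,\sigma_{l-k_r}$ of the complementary block, and the single missing top symmetric function $z_{k_r+1}\cdots z_{l+1}$ is forced to equal $P^{-1}$ by $\prod_s z_s=1$; thus $\{z_{k_r+1},\dots,z_{l+1}\}$ is also determined. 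Generic fibres are therefore finite, the image is $l$-dimensional, and the $y_j^{0,r}$ are algebraically independent, equivalently their Jacobian is generically nonsingular. I expect the orbit-sum and dominant-weight steps to be routine; the only real care is needed in the integrality argument fixing the remainder at exactly $\mathcal{O}(e^{-2\pi\tau})$ and in the reconstruction underlying the algebraic independence.
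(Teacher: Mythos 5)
Your proposal is correct and complete. Note, however, that the paper offers no proof of this lemma to compare against: it is imported verbatim from [DZ1998] (where it is stated for a general root system $R$), so the only meaningful comparison is with that source. There the argument is phrased representation-theoretically: $y_j$ is expanded in Weyl-orbit sums $m_\mu$ with $\mu$ running over dominant weights $\preceq \omega_j$, the inequality $(w(\omega_j),\omega_{k_r})\le(\omega_j,\omega_{k_r})$ isolates the leading exponential rate, and algebraic independence of the limits is established via the structure of the stabilizer subgroup. Your type-$A$ argument is a legitimately different and more elementary route: because every fundamental weight of $A_l$ is minuscule, $y_j$ is exactly a single orbit sum, and indexing the orbit by $j$-element subsets turns the rate computation into the explicit identity $(\mu_S,\omega_{k_r})=|S\cap\{1,\dots,k_r\}|-\tfrac{jk_r}{l+1}$, which gives for free both the integrality of the rate gaps (hence the precise $\mathcal{O}(e^{-2\pi\tau})$ remainder) and the closed forms $y_j^{0,r}=\sigma_j(z_1,\dots,z_{k_r})$ for $j\le k_r$ and $y_j^{0,r}=(z_1\cdots z_{k_r})\,\sigma_{j-k_r}(z_{k_r+1},\dots,z_{l+1})$ for $j>k_r$. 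Your block-reconstruction proof of algebraic independence (recover the first block from its elementary symmetric functions, divide out $P=z_1\cdots z_{k_r}$, and use $\prod_s z_s=1$ to supply the missing top symmetric function of the second block) is sound and arguably more transparent than a Jacobian computation; what it costs is generality, since it does not transfer to non-minuscule root systems, but for the $A_l$ setting of this paper that is no loss.
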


From these explicit expressions in \eqref{df2.9} and Lemma \ref{lem2.3}, it is not difficult to see that
$\widetilde{y}_j(x)\in\mathcal{A}$ for $j=1,\cdots, l+2$. Furthermore, we have

 \begin{thm}(Chevalley-type theorem)
\label{thm2.4}The ring $\mathcal{A}$ is isomorphic to the ring of
polynomials of $\widetilde{y}_1(x),\cdots,\widetilde{y}_{l+2}(x)$.\end{thm}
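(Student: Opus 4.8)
The plan is to prove two things: that $\widetilde{y}_1(x),\dots,\widetilde{y}_{l+2}(x)$ are algebraically independent, and that they generate $\mathcal{A}$ as a polynomial ring. Algebraic independence is immediate: by Lemma \ref{lem2.3} the functions $y_1(\bx),\dots,y_l(\bx)$ are algebraically independent on $V$, while $\widetilde{y}_{l+1}=e^{2\pi i x_{l+1}}$ and $\widetilde{y}_{l+2}=e^{2\pi i x_{l+2}}$ introduce the two extra independent variables $x_{l+1},x_{l+2}$. Since each $\widetilde{y}_j$ with $j\le l$ differs from $y_j(\bx)$ only by an invertible exponential factor in $x_{l+1},x_{l+2}$, no nontrivial polynomial relation among the $\widetilde{y}_j$ can hold.

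For the generation statement I would first reduce a general element $f\in\mathcal{A}$ to a normal form. For fixed $x_{l+1},x_{l+2}$, invariance under the lattice part and under $W$, together with the classical Chevalley theorem, shows that $f$ is a polynomial in $y_1(\bx),\dots,y_l(\bx)$ whose coefficients are Fourier polynomials in $\frac{1}{l+1}x_{l+1},\frac{1}{l+1}x_{l+2}$. Thus $f$ is a finite sum of terms $\prod_{j=1}^l y_j(\bx)^{n_j}\cdot e^{\frac{2\pi i}{l+1}(p\,x_{l+1}+q\,x_{l+2})}$ with $n_j\in\mathbb{Z}_{\ge 0}$ and $p,q\in\mathbb{Z}$. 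Next I would impose invariance under the extra generators \eqref{df2.2} and \eqref{df2.3}. A direct computation gives $y_j(\bx+\w_k)=e^{-2\pi i jk/(l+1)}y_j(\bx)$ and $y_j(\bx+\w_{k+1})=e^{-2\pi i j(k+1)/(l+1)}y_j(\bx)$, because translation by a fundamental weight scales every $e^{2\pi i v_s}$ by a common root of unity and $y_j$ is homogeneous of degree $j$ in these variables. Since the $y_j$ are algebraically independent and the exponentials in $x_{l+1},x_{l+2}$ are linearly independent, invariance holds term by term, forcing the congruences $p\equiv -kN$ and $q\equiv -(k+1)N \pmod{l+1}$, where $N:=\sum_j j\,n_j$.

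Comparing with $\widetilde{y}_j=e^{2\pi i(d_{j,k}x_{l+1}+d_{j,k+1}x_{l+2})}y_j(\bx)$ and using $d_{j,k}+\frac{jk}{l+1}\in\mathbb{Z}$, which follows directly from \eqref{df2.8}, I would then show that each admissible term equals $\prod_{j=1}^l \widetilde{y}_j^{\,n_j}\,\widetilde{y}_{l+1}^{\,a}\,\widetilde{y}_{l+2}^{\,b}$ with $a=\frac{p}{l+1}-\sum_j n_j d_{j,k}$ and $b=\frac{q}{l+1}-\sum_j n_j d_{j,k+1}$; the congruences guarantee $a,b\in\mathbb{Z}$. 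Hence every $f\in\mathcal{A}$ is a Laurent polynomial in $\widetilde{y}_{l+1},\widetilde{y}_{l+2}$ and a genuine polynomial in $\widetilde{y}_1,\dots,\widetilde{y}_l$.

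The final and most delicate step is to rule out negative powers of $\widetilde{y}_{l+1}$ and $\widetilde{y}_{l+2}$ using the boundedness conditions \eqref{df2.6} and \eqref{df2.7}; this is where I expect the main difficulty. Along the ray \eqref{df2.6} Lemma \ref{lem2.3} gives $y_j(\bx)\sim e^{2\pi d_{j,k}\tau}\,y_j^{0,1}(\bx^0)$, and this exponential growth cancels exactly against the decay $e^{-2\pi d_{j,k}\tau}$ coming from the prefactor $e^{2\pi i d_{j,k}x_{l+1}}$, so each $\widetilde{y}_j$ with $j\le l$ approaches a finite generic limit while $\widetilde{y}_{l+1}\sim e^{-2\pi\tau}\to 0$ and $\widetilde{y}_{l+2}$ stays constant. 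Grouping $f$ by powers of $\widetilde{y}_{l+1}$, the term with the most negative exponent $a$ dominates like $e^{-2\pi a\tau}$, and its limiting coefficient is a nonzero polynomial in the algebraically independent $y_j^{0,1}$ unless it vanishes identically. Boundedness for all $x^0$ therefore forces $a\ge 0$, and the symmetric argument along \eqref{df2.7} forces $b\ge 0$. The hard part is precisely this separation-of-exponential-rates argument: one must exclude cancellation among distinct powers of $\widetilde{y}_{l+1}$, which is exactly what the algebraic independence in Lemma \ref{lem2.3} guarantees. Combining the three steps yields $\mathcal{A}=\mathbb{C}[\widetilde{y}_1,\dots,\widetilde{y}_{l+2}]$.
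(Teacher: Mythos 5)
Your proposal is correct and follows essentially the same route as the paper: algebraic independence of the $\widetilde{y}_j$, reduction of a $\widetilde{W}$-invariant to a Laurent polynomial in $\widetilde{y}_{l+1},\widetilde{y}_{l+2}$, and then use of the boundedness conditions \eqref{df2.6}--\eqref{df2.7} together with Lemma \ref{lem2.3} to exclude negative powers by comparing exponential growth rates in $\tau$. The only difference is that you spell out the middle reduction step (the congruences on the exponents forced by the translations \eqref{df2.2}--\eqref{df2.3}), which the paper simply asserts as a consequence of $\widetilde{W}$-invariance; your computation there is correct.
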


\begin{proof}  Observe that $\widetilde{y}_1(x),\cdots,\widetilde{y}_{l+2}(x)$
 are algebraically independent. So in order to prove the theorem, we only need to
show that any element $f(x)$ of the ring $\mathcal{A}$ can be represented as a polynomial of
$\widetilde{y}_1(x),\cdots, \widetilde{y}_{l+2}(x)$. From the invariance with respect to $\widetilde{W}$, it follows
that  $f(x)$ can be represented as a polynomial of
$\widetilde{y}_1(x),\cdots, \widetilde{y}_{l+1}(x)$, $\widetilde{y}_{l+2}(x)$,
$\widetilde{y}^{-1}_{l+1}(x)$, $\widetilde{y}^{-1}_{l+2}(x)$. It suffices to
show that in $f(x)$ there are no negative powers of
$\widetilde{y}_{l+1}(x)$ and $\widetilde{y}_{l+2}(x)$.

Assume that
\begin{equation}
f(x)=\dsum_{s\geq-S}\widetilde{y}_{l+1}^s\dsum_{t\in \Lambda}\widetilde{y}_{l+2}^t\,Q_{s,t}(\widetilde{y}_1(x),\cdots,
\widetilde{y}_l(x))
\end{equation}
for a positive integer $S$ and the polynomial $Q_{-S, t_0}(\widetilde{y}_1(x),\cdots,\widetilde{y}_l(x))$ does not vanish identically,
where $t_0=\mbox{min} \{t\in \Lambda| \mbox{ $Q_{-S,t}$ does not vanish identically}\}$ and $\Lambda$ is a finite subset of $\mathbb{Z}$.
With the use of Lemma \ref{lem2.3}, in the limit \eqref{df2.6} the function $f(x)$ behaves as
\eqa
f(x)=e^{2\pi S\tau-2\pi iSx_{l+1}^0}\,
\dsum_{t\in \Lambda}e^{2\pi itx_{l+2}^0}\,[Q_{s,t}(\widetilde{y}_1^{0,1}(x^0),\cdots,\widetilde{y}_l^{0,1}(x^0))+\mathcal{O}(e^{-2\pi\tau})],\nn
\eeqa where $\widetilde{y}_j^{0,1}(x^0)=e^{2\pi i
(d_{j,k}x_{l+1}^0+d_{j,k+1}x_{l+2}^0)}y_j^{0,1}({\bf x^0})$ for $j=1,\cdots,l$.
In order to assure the function $f(x)$ bounded for $\tau\mapsto +\infty$,
it is necessary to have
\begin{equation}Q_{-S,t_0}(\widetilde{y}_1^{0,1}(x^0),\cdots,\widetilde{y}_l^{0,1}(x^0))
\equiv 0,\nn \end{equation} which is a contradiction with the algebraic independence of $\widetilde{y}_1^{0,1}(x^0), \cdots,
\widetilde{y}_l^{0,1}(x^0)$. This means that
there are no negative powers of $\widetilde{y}_{l+1}(x)$. Similarly one can show that
there are no negative powers of $\widetilde{y}_{l+2}(x)$. This completes the proof of the
theorem.\end{proof}

\begin{cor}\label{cor2.6} The function $\deg$  defined as
\beq \begin{array}{l}
\deg \widetilde{y}_{l+1}=\dfrac{1}{k},\quad \deg \widetilde{y}_{l+2}=\dfrac{1}{l-k},\\
d_j:=\deg \widetilde{y}_j=\dfrac{d_{j,k}}{k} +\dfrac{d_{j,k+1}}{l-k}=\left\{\begin{array}{cl}
\frac{j}{k},& j=1,\cdots, k,\\
 \frac{l-j+1}{l-k}, & j=k+1,\cdots, l
\end{array}\right.\end{array} \label{df2.11}\eeq
determines on $\mathcal{A}$ a structure of graded polynomial ring. Especially,
\beq d_k=d_{k+1}=1> d_s, \quad s\ne k, k+1.\label{df2.12}\eeq
\end{cor}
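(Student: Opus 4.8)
The plan is to derive everything from the Chevalley-type theorem (Theorem \ref{thm2.4}) together with a direct evaluation of the scalar products recorded in \eqref{df2.8}. Since Theorem \ref{thm2.4} identifies $\mathcal{A}$ with the polynomial ring on the algebraically independent generators $\widetilde{y}_1,\dots,\widetilde{y}_{l+2}$, any assignment of rational weights to these generators extends uniquely and multiplicatively to a grading of $\mathcal{A}$; so the graded-polynomial-ring claim is immediate and the only substantive task is to check that the weights declared in \eqref{df2.11} are precisely those forced by $\deg\widetilde{y}_{l+1}=1/k$, $\deg\widetilde{y}_{l+2}=1/(l-k)$ and the relation $d_j=\frac{d_{j,k}}{k}+\frac{d_{j,k+1}}{l-k}$, and then to read off \eqref{df2.12}.

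First I would substitute the closed form \eqref{df2.8} for $d_{j,k}=(\w_j,\w_k)$ and its $k\mapsto k+1$ analogue $d_{j,k+1}=(\w_j,\w_{k+1})$ into $d_j=\frac{d_{j,k}}{k}+\frac{d_{j,k+1}}{l-k}$, splitting the index range according to where $j$ falls relative to the thresholds $k$ and $k+1$. For $1\le j\le k$ both factors lie in the first branch of \eqref{df2.8}, yielding $d_j=\frac{j(l-k+1)}{k(l+1)}+\frac{j}{l+1}$, and the identity $(l-k+1)+k=l+1$ collapses this to $d_j=j/k$. For $k+2\le j\le l$ both factors lie in the second branch, yielding $d_j=\frac{l-j+1}{l+1}\bigl(1+\frac{k+1}{l-k}\bigr)$, and the identity $(l-k)+(k+1)=l+1$ collapses this to $d_j=\frac{l-j+1}{l-k}$. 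This reproduces \eqref{df2.11}.

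The one point demanding care — and the only place a boundary error could creep in — is the transition index $j=k+1$: there $d_{j,k}$ already sits in the \emph{second} branch of \eqref{df2.8} while $d_{j,k+1}$ still sits in the \emph{first} branch, since the first branch for $\w_{k+1}$ runs through $j=k+1$. Treating this case separately gives $d_{k+1}=\frac{1}{k}\cdot\frac{k(l-k)}{l+1}+\frac{1}{l-k}\cdot\frac{(k+1)(l-k)}{l+1}=\frac{(l-k)+(k+1)}{l+1}=1$, which agrees with the value $\frac{l-j+1}{l-k}$ at $j=k+1$; thus the two branches of \eqref{df2.11} match continuously and no weight is miscounted.

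Finally I would deduce \eqref{df2.12} from monotonicity. On $1\le j\le k$ the value $d_j=j/k$ strictly increases to $d_k=1$, so $d_j<1$ for $j<k$; on $k+1\le j\le l$ the value $d_j=(l-j+1)/(l-k)$ strictly decreases from $d_{k+1}=1$, so $d_j<1$ for $j>k+1$. Hence the weight $1$ is attained exactly at $j=k$ and $j=k+1$ and strictly dominates every other $d_s$, giving $d_k=d_{k+1}=1>d_s$ for $s\ne k,k+1$. I do not anticipate any genuine obstacle: the entire argument is a bookkeeping computation whose only trap is the branch boundary at $j=k+1$.
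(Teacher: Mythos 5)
Your proof is correct and is exactly the routine verification the paper omits: the corollary is stated without proof, and your argument (grading via the algebraic independence of the $\widetilde{y}_j$ from Theorem \ref{thm2.4}, the branch-by-branch substitution of \eqref{df2.8} including the transition case $j=k+1$, and the monotonicity giving \eqref{df2.12}) is the intended one. No issues.
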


The numbers $d_1,\dots,d_{l+2}$ with $d_{l+1}=d_{l+2}=0$ satisfy a duality relation. For any given integer $k$, we
denote $A_l\setminus \{\alpha_{k},\alpha_{k+1}\}={\mathcal R}_{1}\cup {\mathcal R}_{2}$, where
 ${\mathcal R}_{1}=\{\alpha_{1},\cdots,\alpha_{k-1}\}$ and
 ${\mathcal R}_{2}=\{\alpha_{k+2},\cdots,\alpha_{l}\}$.
On each component we have an involution $j\mapsto j^*$ given by the
reflection with respect to the center of the component. Let us define
\beq
k^*=l+1,~~{(k+1)}^*=l+2,~~(l+2)^*=k+1,~~(l+1)^*=k,\label{df2.13}\eeq
then
\beq d_j+d_{j^*}=1,\quad j=1,\dots, l+2. \label{df2.14}\eeq

\section{A flat pencil of metrics on the orbit space $\mathcal{M}$}\label{sec-3}

Let us denote
$\mathcal{M}^{(k,k+1)}:=\widetilde{V}\otimes\mathbb{C}/\widetilde{W}$, called
the $orbit$ $space$ of the extended Weyl group
$\widetilde{W}$. We define an indefinite flat metric $(dx_i,dx_j)^{\thicksim}$ on
$\widetilde{V}_\mathbb{C} = \widetilde{V} \otimes_\mathbb{R} \mathbb{C}$
where
$\widetilde V$ is the orthogonal direct sum of $V$ and $\mathbb
R^2$.  Here ${V}$ is endowed with the $W$-invariant Euclidean metric \footnote{{As is common in the
Frobenius manifold literature, we use the word metric to denote
a complex-valued, symmetric, non-degenerate, bilinear form.}}
\beq (dx_a,dx_b)^\thicksim=\frac{1}{4\pi^2}(\omega_a,\omega_b),\quad 1\leq a,b\leq l \label{df3.01} \eeq
and $\mathbb R^2$ is endowed with the metric
\beq
(dx_{l+1},dx_{l+1})^\thicksim=-\frac{\tau_{11}}{4\pi^2},\quad
  (dx_{l+1},dx_{l+2})^\thicksim=-\frac{\tau_{12}}{4\pi^2},\quad (dx_{l+2},dx_{l+2})^\thicksim= -\frac{\tau_{22}}{4\pi^2},\label{df3.02}
\eeq
where
\beq\label{df3.3}
\left(\begin{array}{ll}
\tau_{11}& \tau_{12}\\
\tau_{12} & \tau_{22}\end{array}\right)=\left(\begin{array}{ll}
d_{k,k} & d_{k,k+1}\\
d_{k+1,k} & d_{k+1,k+1}\end{array}\right)^{-1}=\left(\begin{array}{cc}
\frac{k+1}{k} & -1\\
-1 & \frac{l-k+1}{l-k}\end{array}\right).\nn
\eeq

The set of generators for the ring ${\mathcal A}$ are defined by \eqref{df2.9}. They form a system of global coordinates on
${\mathcal M}^{(k,k+1)}$. We now introduce a system of
local coordinates on ${\mathcal M}^{(k,k+1)}$ as follows
\beq\label{zh9}
y^1=\ty_1,\dots, y^l=\ty_l,\ y^{l+1}=\log \ty_{l+1}=2 \pi i
x_{l+1},\ y^{l+2}=\log \ty_{l+2}=2 \pi i
x_{l+2}.
\eeq
They live on the universal covering $\widetilde{\mathcal M}$ of $\mathcal{M}$, where
$\mathcal{M}:={\mathcal M}^{(k,k+1)}\setminus\{\ty_{l+1}=0\}\cup\{\ty_{l+2}=0\}$.
 The projection
\beq
\Pr: \widetilde V\to \widetilde{\mathcal M}, \qquad (x_1,\cdots,x_{l+2})\mapsto (y^1,\cdots,y^{l+2})
\label{df3.2}\eeq
induces a symmetric
bilinear form on $T^{*}{\widetilde{\mathcal{M}}}$
\begin{equation}
(d y^i,d y^j)^{\sptilde}\equiv
g^{ij}(y):=\dsum_{a,b=1}^{l+2}\dfrac{\p y^i}{\p x_a}\dfrac{\p
y^j}{\p x_b}(dx_a,dx_b)^{\sptilde}.\label{df3.3}
\end{equation}

 \begin{lem} \label{lem3.1} The matrix entries $g^{ij}(y)$ of \eqref{df3.3}
are weighted homogeneous polynomials in $y^1,\cdots, y^{l}$, $e^{y^{l+1}}$, $e^{y^{l+2}}$
of the degree
$\deg g^{ij}(y)=\deg y^i+\deg y^j,$
here $\deg y^{l+1+\nu}=d_{l+1+\nu}=0$ for $\nu=0,1$.
The matrix $\left(g^{ij}(y)\right)$ does not degenerate outside the \mbox{Pr}-images of the hyperplanes
\beq\left\{({\bf x},x_{l+1},x_{l+2})|
(\beta ,{\bf x})=m\in\mathbb{Z}, \forall{x_{l+1}}, \forall{x_{l+2}}
\right\}, \quad \beta\in {\Phi}^+, \nn\eeq
where ${\Phi}^+$ is the set of the all positive roots.
\end{lem}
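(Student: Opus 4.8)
The plan is to prove Lemma~\ref{lem3.1} in three stages: first establish that each $g^{ij}(y)$ is a weighted homogeneous polynomial of the stated degree, then determine the degeneracy locus of the matrix. I would begin by writing out the change of coordinates explicitly. From \eqref{zh9} we have $y^{l+1}=2\pi i\,x_{l+1}$ and $y^{l+2}=2\pi i\,x_{l+2}$, so $\frac{\partial y^{l+1}}{\partial x_{l+1}}=\frac{\partial y^{l+2}}{\partial x_{l+2}}=2\pi i$ and these coordinates do not depend on $x_1,\dots,x_l$; meanwhile $y^j=\ty_j$ for $j\le l$ depends on all of $x_1,\dots,x_{l+2}$ through the formula \eqref{df2.9}. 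Substituting these partial derivatives, together with the flat metric entries \eqref{df3.01}--\eqref{df3.3} on $\widetilde V$, into the definition \eqref{df3.3} reduces the whole computation to differentiating the explicit Fourier polynomials $\ty_j(x)=e^{2\pi i(d_{j,k}x_{l+1}+d_{j,k+1}x_{l+2})}y_j(\mathbf{x})$.

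For the polynomiality and homogeneity claim, the key observation is that $\frac{\partial \ty_j}{\partial x_{l+1}}=2\pi i\,d_{j,k}\,\ty_j$ and $\frac{\partial \ty_j}{\partial x_{l+2}}=2\pi i\,d_{j,k+1}\,\ty_j$, so derivatives in the $x_{l+1},x_{l+2}$ directions simply reproduce $\ty_j$ up to scalar factors and keep us inside the ring $\mathcal A$. For the $x_a$-derivatives with $a\le l$, one uses that the $y_j(\mathbf x)=\sigma_j(e^{2\pi i v_1},\dots,e^{2\pi i v_{l+1}})$ are the basic $W$-invariant Fourier polynomials, so that $\frac{\partial y_j}{\partial x_a}$ is again a Fourier polynomial in the $y_1,\dots,y_l$ (this is the usual fact that $\partial y_i/\partial x_a$ lies in $\mathcal A$, guaranteed by Theorem~\ref{thm2.4}). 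Hence every $g^{ij}(y)$ is a polynomial in $y^1,\dots,y^l$, $e^{y^{l+1}}$, $e^{y^{l+2}}$. The degree count then follows by assigning each factor its weight from Corollary~\ref{cor2.6}: the metric \eqref{df3.01}--\eqref{df3.3} is constant (degree $0$), each $\ty_j$ carries degree $d_j$, and each differentiation $\partial/\partial x_a$ with respect to the logarithmic-type coordinate lowers degree appropriately so that the product $\frac{\partial y^i}{\partial x_a}\frac{\partial y^j}{\partial x_b}$ has total degree $\deg y^i+\deg y^j$. I would verify this bookkeeping by treating the Euler vector field $E$ of \eqref{df1.2} as the grading operator and checking $E\,g^{ij}=(\deg y^i+\deg y^j)\,g^{ij}$, which is cleaner than tracking monomials one by one.

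For the nondegeneracy statement, the strategy is to relate $\det(g^{ij}(y))$ back to the flat metric on $\widetilde V$ via the Jacobian of the projection $\mathrm{Pr}$. Since $(dx_a,dx_b)^\thicksim$ is a constant nondegenerate form on $\widetilde V_\mathbb C$, the matrix $(g^{ij}(y))$ degenerates exactly where the Jacobian $\det\!\big(\frac{\partial y^i}{\partial x_a}\big)$ vanishes. Because $y^{l+1},y^{l+2}$ are independent linear functions of $x_{l+1},x_{l+2}$ alone, this Jacobian factors as a nonzero constant times $\det\!\big(\frac{\partial \ty_j}{\partial x_a}\big)_{1\le j,a\le l}$, i.e.\ the Jacobian of the basic $W$-invariants $y_1(\mathbf x),\dots,y_l(\mathbf x)$ with respect to $x_1,\dots,x_l$ (the exponential prefactors $e^{2\pi i(\cdots)}$ are nowhere zero and contribute only nonvanishing factors). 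The vanishing locus of this classical Weyl-group Jacobian is precisely the union of reflection hyperplanes, which in the present Fourier/periodic setting are the sets $(\beta,\mathbf x)=m\in\mathbb Z$ for $\beta\in\Phi^+$. Pulling this back through $\mathrm{Pr}$ gives exactly the hyperplanes in the statement.

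The main obstacle I anticipate is the nondegeneracy part rather than the polynomiality: one must argue carefully that $\det\!\big(\frac{\partial \ty_j}{\partial x_a}\big)$ vanishes \emph{only} on the reflection hyperplanes and nowhere else on the orbit space. The standard tool is that for a Weyl group the Jacobian of the basic invariants is, up to a nonzero constant, the product $\prod_{\beta\in\Phi^+}(e^{2\pi i(\beta,\mathbf x)}-1)$ (the discriminant), and I would invoke the Chevalley-type freeness from Theorem~\ref{thm2.4} together with the algebraic independence of the $\ty_j$ to conclude that this product captures all zeros. The delicate point is checking that adjoining the two extra logarithmic coordinates $y^{l+1},y^{l+2}$ does not introduce new degeneracies away from these hyperplanes, which is where the explicit block structure of the metric \eqref{df3.01}--\eqref{df3.3} and the invertibility of the $2\times2$ matrix $(\tau_{ij})$ (equivalently of $(d_{k,k},d_{k,k+1};d_{k+1,k},d_{k+1,k+1})$) must be used.
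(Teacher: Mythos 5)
Your treatment of the nondegeneracy statement is essentially the paper's own argument: the Jacobian of $\mathrm{Pr}$ is block-triangular, factors into nowhere-vanishing exponential prefactors times the Jacobian of the basic $W$-invariants $y_1({\bf x}),\dots,y_l({\bf x})$, and the latter vanishes precisely on the reflection hyperplanes by the Weyl denominator formula $J({\bf x})=e^{-\sum_{\alpha\in\Phi^+}\pi i(\alpha,{\bf x})}\prod_{\beta\in\Phi^+}(e^{2\pi i(\beta,{\bf x})}-1)$; combined with the constancy and nondegeneracy of $(dx_a,dx_b)^\thicksim$ on all of $\widetilde V_{\mathbb C}$ this gives the claim. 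That part is fine.

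The polynomiality argument, however, contains a genuine gap. You assert that each individual partial derivative $\partial y_j({\bf x})/\partial x_a$ is ``again a Fourier polynomial in the $y_1,\dots,y_l$'' and that this is ``guaranteed by Theorem~\ref{thm2.4}.'' This is false: differentiating a $W$-invariant Fourier polynomial in a single coordinate direction destroys $W$-invariance (already $\partial y_1/\partial x_1=2\pi i(e^{2\pi iv_1}-e^{2\pi iv_2})$ is not symmetric in the $v_s$), so Theorem~\ref{thm2.4} does not apply to these derivatives. What is $\widetilde W$-invariant is only the full contraction $\sum_{p,q}\frac{\partial y^i}{\partial x_p}\frac{\partial y^j}{\partial x_q}(dx_p,dx_q)^\thicksim$, because the metric \eqref{df3.01} is $W$-invariant; this is the object to which the Chevalley-type theorem must be applied, as in \eqref{dff3.5}. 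Moreover, even after establishing $\widetilde W$-invariance, Theorem~\ref{thm2.4} only yields membership in $\mathcal{A}$ after one verifies the boundedness conditions \eqref{df2.6} and \eqref{df2.7} of Definition~\ref{def2.1}; without that check one can conclude only that $g^{ij}(y)$ is a polynomial in $\ty_1,\dots,\ty_l,\ty_{l+1}^{\pm1},\ty_{l+2}^{\pm1}$, i.e.\ possibly containing negative powers of $e^{y^{l+1}}$ and $e^{y^{l+2}}$. Your proposal omits this verification entirely, and it is not automatic: one needs the estimate $(w(\omega_i),\omega_{k_r})\le(\omega_i,\omega_{k_r})$ for all $w\in W$ (via Lemma~\ref{lem2.3}) to see that every term of $g^{ij}$ stays bounded in those limits. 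Your suggestion to check homogeneity by applying the Euler grading operator is reasonable once polynomiality is in hand, but the two missing ingredients above are exactly the substance of the paper's proof.
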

\begin{proof}With the use of \eqref{df3.3},\eqref{df2.8} and \eqref{df2.11}, we obtain
\beq\begin{array}{l}
g^{j,l+1}(y)=\zeta_j d_jy^j, \quad g^{j,l+2}(y)=(1-\zeta_j) d_jy^j, \quad j=1,\cdots, l,\\
g^{l+1,l+1}(y)=\frac{k+1}{k},\quad  g^{l+1,l+2}(y)=-1,\quad  g^{l+2,l+2}(y)=\frac{l-k+1}{l-k},\label{df3.4}
\end{array}\eeq
where $\zeta_j=\left\{\begin{array}{ll} 1, & 1\leq j\leq k,\\
0, & k+1\leq j\leq l. \end{array}\right.$
Also, for $1\leq i,j\leq l$ we have
\beq g^{ij}(y)=c_{ij}y^iy^j+\frac{1}{4\pi^2}\dsum_{p,q=1}^{l}\dfrac{\p y^i}{\p x_p}\dfrac{\p
y^j}{\p x_q}(\omega_p,\omega_q),\quad c_{ij}=(\zeta_j d_{i,k}+(1-\zeta_j)d_{i,k+1})d_j,\label{dff3.5} \eeq
which are Fourier  polynomials invariant with respect to $\widetilde{W}$ and bounded in the limits \eqref{df2.6} and \eqref{df2.7}. It follows from
Theorem \ref{thm2.4} and \eqref{df3.3} that $g^{ij}(y)$ are weighted homogeneous polynomials
in $y^1,\cdots, y^{l}$, $e^{y^{l+1}}$, $e^{y^{l+2}}$ of the degree $\deg g^{ij}(y)=\deg y^i+\deg y^j$.

Observe that the Jacobian of the projection map \eqref{df3.2} is given by
\eqa
\det\left(\dfrac{\p y^j}{\p x_a}\right)&=&-4\pi^2e^{2\pi i \sum_{j=1}^l(d_{j,k}x_{l+1}+d_{j,k+1}x_{l+2})}\det\left(\dfrac{\p y_j({\bf x})}{\p x_p}\right)\nn\\
&=&c\, e^{2\pi i \sum_{j=1}^l(d_{j,k}x_{l+1}+d_{j,k+1}x_{l+2})} J({\bf x}), \label{df3.6}
\eeqa
where $J({\bf x})=e^{-\sum_{\alpha\in \Phi^+} {\pi i (\alpha,{\bf x})}}\displaystyle{\prod_{\beta\in\Phi^+}}(e^{2\pi i (\beta,{\bf x})}-1)$ and
$c$ is a nonzero constant (\cite{Bour}). So the projection map \eqref{df3.2} is  a local diffeomorphsim outside the
above hyperplanes, which assures the nondegeneracy of $(g^{ij}(y))$.
\end{proof}

\begin{lem}\label{lem3.2}For $k\leq i, j\leq k+1$, the term $y^iy^j$ only possibly appears in $g^{ij}(y)$ and $g^{ji}(y)$ with the coefficient $c_{ij}-d_{i,j}$, where
$c_{ij}=(\zeta_jd_{i,k}+(1-\zeta_j)d_{i,k+1})d_j$ and $d_{i,j}=(\omega_i,\omega_j)$.\end{lem}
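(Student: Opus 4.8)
The plan is to isolate the coefficient of the top-degree monomial $y^iy^j$ by combining the weighted homogeneity from Lemma \ref{lem3.1} with a leading-Fourier-mode (highest weight) analysis of the two pieces of $g^{ij}(y)$ displayed in \eqref{dff3.5}.

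First I would record that for $i,j\in\{k,k+1\}$ one has $d_i=d_j=1$, so by Lemma \ref{lem3.1} the entry $g^{ij}(y)$ is weighted homogeneous of degree $d_i+d_j=2$. Since $\deg y^s<1$ for $s\neq k,k+1$ and $\deg e^{y^{l+1}}=\deg e^{y^{l+2}}=0$ by \eqref{df2.12}, the only degree-two monomials built from $y^1,\dots,y^l$ without the exponential factors are $(y^k)^2$, $y^ky^{k+1}$, $(y^{k+1})^2$. I would then attach to each generator $\ty_s$ its character in the variables $x_{l+1},x_{l+2}$, namely the bidegree $(d_{s,k},d_{s,k+1})$ read off from \eqref{df2.9}; using the explicit values in \eqref{df2.8} one checks that these three products carry pairwise distinct bidegrees. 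Because $g^{ij}(y)$ itself has the single bidegree $(d_{i,k}+d_{j,k},\,d_{i,k+1}+d_{j,k+1})$ of $y^iy^j$, this already forces the monomial $y^iy^j$ to occur only in the entries $g^{ij}$ and $g^{ji}$, and reduces the problem to computing one scalar, its coefficient.

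Next I would extract that scalar by splitting $g^{ij}$ as in \eqref{dff3.5}. The term $c_{ij}y^iy^j$ supplies the contribution $c_{ij}$, so it remains to analyze the Euclidean summand $\frac{1}{4\pi^2}\sum_{p,q}\frac{\p y^i}{\p x_p}\frac{\p y^j}{\p x_q}(\w_p,\w_q)$. Pulling out the common prefactor $e^{2\pi i((d_{i,k}+d_{j,k})x_{l+1}+(d_{i,k+1}+d_{j,k+1})x_{l+2})}$, it equals the Euclidean pairing of the $W$-invariants $y_i(\bx),y_j(\bx)$. Writing $y_s(\bx)=\sum_{\mu\in W\w_s}e^{2\pi i(\mu,\bx)}$ and using $\p\bx/\p x_p=\av_p$ together with the dual-basis expansion $\mu=\sum_p(\mu,\av_p)\w_p$, this pairing evaluates to $-\sum_{\mu\in W\w_i,\,\nu\in W\w_j}(\mu,\nu)\,e^{2\pi i(\mu+\nu,\bx)}$. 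Now $y_iy_j=\sum_{\mu,\nu}e^{2\pi i(\mu+\nu,\bx)}$ has $e^{2\pi i(\w_i+\w_j,\bx)}$ as its highest mode with coefficient $1$; since $\w_i,\w_j$ are dominant, $\w_i-\mu$ and $\w_j-\nu$ are nonnegative sums of simple roots, so $\mu+\nu=\w_i+\w_j$ forces $\mu=\w_i,\nu=\w_j$. Hence the coefficient of this top mode in the Euclidean piece is $-(\w_i,\w_j)=-d_{i,j}$, and matching leading modes shows the Euclidean piece contributes exactly $-d_{i,j}\,y^iy^j$. Adding the two contributions yields the coefficient $c_{ij}-d_{i,j}$.

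The main obstacle is the control needed to justify that reading off a single Fourier coefficient genuinely computes the coefficient of the \emph{pure} monomial $y^iy^j$: one must rule out that some other degree-two monomial carrying extra factors of $e^{y^{l+1}}$ or $e^{y^{l+2}}$ shares the combined $(\bx;x_{l+1},x_{l+2})$-character of $y^iy^j$. This is exactly where the homogeneity bound (total degree $2$ with $\deg e^{y^{l+1}},\deg e^{y^{l+2}}>0$) together with the dominance/highest-weight uniqueness must be invoked, and where the degeneration limits \eqref{df2.6}--\eqref{df2.7} of Lemma \ref{lem2.3}, which annihilate positive powers of $\ty_{l+1}$ and $\ty_{l+2}$, provide the decisive tool; the remaining inner-product bookkeeping in \eqref{df2.8} is routine.
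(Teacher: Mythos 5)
Your proposal is correct and follows essentially the same route as the paper: the same split $g^{ij}=e^{2\pi i[\cdots]}\bigl(c_{ij}y_i({\bf x})y_j({\bf x})+\beta_{ij}({\bf x})\bigr)$ from \eqref{dff3.5}, the same highest-weight/partial-order argument showing that the top Fourier mode of $\beta_{ij}$ is $-d_{i,j}e^{2\pi i(\omega_i+\omega_j,{\bf x})}$, and a $(x_{l+1},x_{l+2})$-character comparison for the exclusivity claim which is just a systematized version of the paper's observation that $g^{kk}(y)/(y^ky^{k+1})$ and $g^{kk}(y)/(y^{k+1})^2$ carry a nontrivial exponential prefactor. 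One small inaccuracy --- the assertion that $(y^k)^2$, $y^ky^{k+1}$, $(y^{k+1})^2$ are the \emph{only} degree-two monomials in $y^1,\dots,y^l$ fails for $l\ge 3$ (e.g.\ $y^1y^{k-1}y^{k+1}$ also has degree $2$) --- but it is not load-bearing, since the bidegree comparison you actually invoke concerns only these three products and the stated conclusion about $y^iy^j$ is unaffected.
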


\begin{proof}
From \eqref{dff3.5}, we have for $k\leq i,j\leq k+1$
\beq g^{ij}(y)=e^{2\pi i [(d_{i,k}+d_{j,k})x_{l+1}+(d_{i,k+1}+d_{j,k+1})x_{l+2}]} (c_{ij}y_i({\bf x})y_j({\bf x})+\beta_{ij}({\bf x}))
,\label{df3.5} \eeq
where
\eqa\beta_{ij}({\bf x})&=&\frac{1}{4\pi^2}
\dsum_{p,q=1}^{l}\dfrac{\p y_i({\bf x})}{\p x_p}\dfrac{\p
y_j({\bf x})}{\p x_q}(\omega_p,\omega_q)\nn\\
&=&-\frac{1}{n_in_j}\dsum_{w,w'\in W}
e^{2\pi i (w(\omega_i)+w'(\omega_j),{\bf x})}
(w(\omega_i),w'(\omega_j).\nn \eeqa
Now we use the standard partial ordering of the weights (see the page 69 in \cite{Hum})
$$\omega \succ \omega' \quad \mbox{iff}\quad \omega-\omega'=\dsum_{m=1}^l c_m \alpha_m$$
for some nonnegative  integers $c_1,\cdots, c_l$. In this case, we will write
$e^{2\pi i (\omega, {\bf x})}\succ e^{2\pi i (\omega', {\bf x})}.$
All the terms in the $W$-invariant Fourier polynomials $\beta_{ij}({\bf x})$ are strictly less than
$e^{2\pi i (\omega_i+\omega_j, {\bf x})}$ except the terms $-d_{i,j} e^{2\pi i (\omega_i+\omega_j, {\bf x})}$.
So the term $y^iy^j$ possibly appears in $g^{ij}(y)$ and $g^{ji}(y)$  with the coefficient $c_{ij}-d_{i,j}$.

Observe that
$$ \frac{g^{kk}(y)}{y^ky^{k+1}}=e^{2\pi i \left(\frac{k}{l+1}x_{l+1}-\frac{l-k}{l+1}x_{l+2}\right)}
\left(\dfrac{c_{kk}y_k({\bf x})}{y_{k+1}({\bf x})}+\dfrac{\beta_{kk}({\bf x})}{y_k{(\bf x)}y_{k+1}({\bf x})}\right)
$$
and
$$\frac{g^{kk}(y)}{(y^{k+1})^2}=e^{2\pi i \left(\frac{k}{l+1}x_{l+1}-\frac{l-k}{l+1}x_{l+2}\right)}
\dfrac{c_{kk}(y_k({\bf x}))^2+\beta_{kk}({\bf x})}{(y_{k+1}({\bf x}))^2}.$$
So $y^{k}y^{k+1}$ and $(y^{k+1})^2$ do not appear in $g^{kk}(y)$. Similarly, we could prove the other cases.

\end{proof}

\begin{lem}\label{alem3.3} Denote
\beq e=\varsigma_1 \frac{\p}{\p y^{k}}+\varsigma_2 \frac{\p}{\p y^{k+1}}, \quad \varsigma_1,\varsigma_2\in\mathbb{R},\eeq
then for $1\le i,j\le l+2$,
\beq \mathcal{L}_e(\mathcal{L}_e g^{ij}(y))=0,\label{ddf3.11}\eeq
where $\mathcal{L}_e$ is the Lie derivative along
the vector field $e$.
\end{lem}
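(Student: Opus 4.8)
The plan is to reduce this tensorial identity to the absence of certain quadratic monomials in the entries $g^{ij}(y)$. Whether one reads $g^{ij}$ as the components of the contravariant metric or merely as scalar functions, the constancy of the coefficients of $e$ in the $y$-coordinates makes $\mathcal{L}_e$ act as the directional derivative $\varsigma_1\,\partial/\partial y^{k}+\varsigma_2\,\partial/\partial y^{k+1}$ (the remaining terms in the Lie derivative of a $(2,0)$-tensor involve derivatives of the components of $e$, which vanish). Hence
\[
\mathcal{L}_e\bigl(\mathcal{L}_e g^{ij}(y)\bigr)=\Bigl(\varsigma_1\frac{\partial}{\partial y^{k}}+\varsigma_2\frac{\partial}{\partial y^{k+1}}\Bigr)^{2}g^{ij}(y),
\]
and, since this must hold for arbitrary $\varsigma_1,\varsigma_2$, the assertion \eqref{ddf3.11} is equivalent to the simultaneous vanishing of $\partial^2_{y^k}g^{ij}$, $\partial_{y^k}\partial_{y^{k+1}}g^{ij}$ and $\partial^2_{y^{k+1}}g^{ij}$, i.e.\ to the claim that no monomial of $g^{ij}(y)$ is quadratic in the pair $y^k,y^{k+1}$.

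The second step is to localise the problem by weighted homogeneity. By Lemma \ref{lem3.1} each $g^{ij}(y)$ is weighted homogeneous of degree $d_i+d_j$ in $y^1,\dots,y^l,e^{y^{l+1}},e^{y^{l+2}}$, where $\deg e^{y^{l+1}}=\deg e^{y^{l+2}}=0$ and, by \eqref{df2.12}, $d_k=d_{k+1}=1$ exceed every other $d_s$. A monomial of $g^{ij}$ quadratic in $y^k,y^{k+1}$ therefore has degree at least $2$, whereas $d_i+d_j\le 2$; hence $d_i+d_j=2$, which forces $i,j\in\{k,k+1\}$, caps the total $y^k,y^{k+1}$ exponent at exactly $2$, and excludes any further factor $y^s$ with $s\neq k,k+1$. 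The exponentials $e^{y^{l+1}},e^{y^{l+2}}$ have degree $0$ and are not constrained at this stage. Thus only $g^{kk}$, $g^{k,k+1}$ and $g^{k+1,k+1}$ can carry a quadratic term, and each such term is of the form $y^\alpha y^\beta(e^{y^{l+1}})^{m}(e^{y^{l+2}})^{n}$ with $\alpha,\beta\in\{k,k+1\}$.

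For these three entries I would bring in the charge bookkeeping of Lemma \ref{lem3.2}. Since $g^{ij}$ possesses a single fixed $(x_{l+1},x_{l+2})$-charge (cf.\ the proof of Lemma \ref{lem3.2}), matching it against the charge of each candidate $y^\alpha y^\beta(e^{y^{l+1}})^{m}(e^{y^{l+2}})^{n}$ determines $m,n$ uniquely; solving these charge equations produces non-negative integers only for the ``bare'' choice $\{\alpha,\beta\}=\{i,j\}$ with $m=n=0$, while every dressed or cross monomial requires a negative power of $e^{y^{l+1}}$ or $e^{y^{l+2}}$. Hence, as recorded in Lemma \ref{lem3.2}, the only surviving quadratic monomial in $g^{ij}$ is $y^iy^j$, with coefficient $c_{ij}-d_{i,j}$. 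The decisive computation is that this coefficient vanishes: inserting $d_k=d_{k+1}=1$, $\zeta_k=1$ and $\zeta_{k+1}=0$ into $c_{ij}=(\zeta_j d_{i,k}+(1-\zeta_j)d_{i,k+1})\,d_j$ from \eqref{dff3.5} gives $c_{kk}=d_{k,k}$, $c_{k,k+1}=d_{k,k+1}$ and $c_{k+1,k+1}=d_{k+1,k+1}$, so that $c_{ij}-d_{i,j}=0$ in each case. Therefore $g^{ij}(y)$ has no quadratic part in $y^k,y^{k+1}$, and \eqref{ddf3.11} follows.

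The crux is conceptual rather than computational. A priori one would expect $g^{kk}$ to contain a $(y^k)^2$ term, so the real content of the lemma is that the quadratic part of the metric disappears in precisely the $y^k,y^{k+1}$ directions. Everything hinges on the exact cancellation $c_{ij}=d_{i,j}$ for $i,j\in\{k,k+1\}$, which is forced by the normalisation $d_k=d_{k+1}=1$ intrinsic to $\widetilde{W}^{(k,k+1)}(A_l)$; the only genuine care needed is to confirm, via the charge argument of Lemma \ref{lem3.2}, that no exponentially dressed or cross monomial intervenes, leaving this bare coefficient as the sole quantity to annihilate.
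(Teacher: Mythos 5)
Your proof is correct and follows essentially the same route as the paper's: reduce by weighted homogeneity (using $d_k=d_{k+1}=1>d_s$ and $d_{l+1}=d_{l+2}=0$) to the block $i,j\in\{k,k+1\}$, invoke Lemma \ref{lem3.2} to isolate $y^iy^j$ as the only quadratic monomial that can survive, and check the cancellation $c_{ij}=d_{i,j}$ there. One cosmetic point: the obstruction to the cross monomials (e.g.\ $y^ky^{k+1}$ or $(y^{k+1})^2$ in $g^{kk}$) is that the required exponent of $e^{y^{l+1}}$ or $e^{y^{l+2}}$ is \emph{fractional} (such as $\pm k/(l+1)$), not merely negative, but either way such a term is excluded from the polynomial ring, so your conclusion stands.
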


\begin{proof}According to the weighted homogeneity and \eqref{df2.12} and \eqref{df3.4},
it suffices to show that $k\leq i,j\leq k+1$,
\beq \dfrac{\p^2}{\p y^k \p y^{k}}{g^{ij}(y)}=\dfrac{\p^2}{\p y^{k+1} \p y^{k+1}}{g^{ij}(y)}=\dfrac{\p^2}{\p y^k \p y^{k+1}}{g^{ij}(y)}=0.\label{ddf3.12}\eeq
It follows from
$$ c_{k(k+1)}=d_{k,k+1},\quad c_{(k+\nu)(k+\nu)}=d_{k+\nu,k+\nu}$$
that $g^{k+\nu,k+\nu}(y)$ does not contain $(y^{k+\nu})^2$ for $\nu=0,1$, and $g^{k,k+1}(y)$ ($=g^{k+1,k}(y)$) does not contain $y^ky^{k+1}$.
Combining with Lemma \ref{lem3.2}, we obtain the desired \eqref{ddf3.12} and complete the proof of this lemma.
\end{proof}

\begin{cor}\label{cor3.4}
For $1\leq i,j\leq l+2$, $$g^{ij}(\cdots, y^{k}+\varsigma_1\lambda,y^{k+1}+\varsigma_2\lambda,\cdots)$$
are linear in the parameter $\lm$.
\end{cor}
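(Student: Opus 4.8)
The plan is to read off the claimed linearity directly from Lemma \ref{alem3.3} by interpreting the coordinate shift as the time-$\lm$ flow of the vector field $e$. Note first that $e=\varsigma_1\,\partial/\partial y^{k}+\varsigma_2\,\partial/\partial y^{k+1}$ has constant coefficients in the global coordinates $y^1,\dots,y^{l+2}$ (the $\varsigma_\nu$ are fixed reals), so its flow $\Phi_\lm$ is precisely the naive shift $y^{k}\mapsto y^{k}+\varsigma_1\lm$, $y^{k+1}\mapsto y^{k+1}+\varsigma_2\lm$ with all remaining coordinates held fixed. Moreover, because $g^{ij}$ is a scalar function, its Lie derivative along $e$ coincides with the plain directional derivative, $\mathcal{L}_e g^{ij}=e\,g^{ij}=\varsigma_1\,\partial_{y^{k}}g^{ij}+\varsigma_2\,\partial_{y^{k+1}}g^{ij}$.

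With these two observations in hand, I would fix the indices $i,j$ and introduce the single-variable function
\beq
h(\lm):=g^{ij}\bigl(\dots,\,y^{k}+\varsigma_1\lm,\;y^{k+1}+\varsigma_2\lm,\,\dots\bigr)=\bigl(g^{ij}\circ\Phi_\lm\bigr)(y).\nn
\eeq
By the chain rule, differentiation in $\lm$ reproduces the action of $e$ along the flow, so $h'(\lm)=(\mathcal{L}_e g^{ij})\circ\Phi_\lm$, and differentiating once more gives $h''(\lm)=(\mathcal{L}_e\mathcal{L}_e g^{ij})\circ\Phi_\lm$. Lemma \ref{alem3.3} asserts exactly that $\mathcal{L}_e(\mathcal{L}_e g^{ij})\equiv 0$ on $\widetilde{\mathcal M}$; since $\Phi_\lm$ maps into the domain of validity, this yields $h''(\lm)\equiv 0$ for every $\lm$. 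Integrating twice then gives $h(\lm)=h(0)+\lm\,h'(0)$, i.e. $g^{ij}$ restricted to the shifted line is affine (of degree at most one) in $\lm$, which is the assertion.

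There is no genuine obstacle here: the entire analytic content was already extracted in Lemma \ref{alem3.3} (itself resting on Lemma \ref{lem3.2} and the normalization $c_{k(k+1)}=d_{k,k+1}$, $c_{(k+\nu)(k+\nu)}=d_{k+\nu,k+\nu}$), and this corollary is its geometric reformulation. The only points that deserve an explicit word are the two bookkeeping facts used above—that $e$ is constant-coefficient in the $y$-coordinates, so that its flow is the stated shift, and that $\mathcal{L}_e$ acting on the scalars $g^{ij}$ is just $e\,g^{ij}$—after which vanishing of $h''$ is automatic.
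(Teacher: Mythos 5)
Your argument is correct and is exactly the reasoning the paper intends: the corollary is the twice-integrated form of Lemma \ref{alem3.3}, which the paper treats as immediate (note that Lemma \ref{lem3.6} explicitly states the analogous equivalence for the $\Gamma_m^{ij}$). Your identification of the shift with the flow of the constant-coefficient field $e$ and the computation $h''(\lm)=(\mathcal{L}_e\mathcal{L}_e g^{ij})\circ\Phi_\lm=0$ supplies precisely the omitted bookkeeping.
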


Suppose $\Sigma$ is the discriminant of $\mathcal{M}^{(k,k+1)}$, i.e.,
$\Sigma=\{y|\det(g^{ij}(y))=0\}$, then on $\mathcal{M}^{(k,k+1)}\setminus \Sigma$ the matrix $(g^{ij}(y))$ is
invertible. The inverse matrix
$(g_{ij}(y))$ determines a flat metric on $\mathcal{M}^{(k,k+1)}\setminus \Sigma$.
Let us now compute the coefficients of the correspondent Levi-Civita connection $\nabla$
for the metric $g_{ij}(y)$. It is convenient to consider the
contravariant components of the connection
\beq
\Gamma_{m}^{ij}(y)=(dy^i, \nabla_m dy^j),\nn \eeq
which are related to
the standard Christoffel coefficients by the formula
\beq
\Gamma_{m}^{ij}(y)=-g^{is}(y)\Gamma_{sm}^{j}(y).\nn \eeq
For the contravariant components, we have
the following formulae
\beq
\Gamma_m^{ij}(y)dy^m=\dfrac{\p y^i}{\p x_a}\dfrac{\p^2 y^j}{\p x_b \p x_r}(dx_a, dx_b)^\thicksim dx_r
\label{df3.13}\eeq
and
 \beq
2g^{sm}(y)\Gamma_m^{ij}(y)=g^{im}(y)\dfrac{\p g^{js}(y)}{\p
y^m}+g^{sm}(y)\dfrac{\p g^{ji}(y)}{\p y^m} -g^{jm}(y)\dfrac{\p
g^{is}(y)}{\p y^m}\,\label{df3.14}\eeq
and
\beq \dfrac{\p g^{ij}(y)}{\p y^m}=\Gamma_m^{ij}(y)+\Gamma_m^{ji}(y). \label{df3.15}\eeq

\begin{lem}\label{lem3.5} $\Gamma_{m}^{ij}(y)$ are weighted homogeneous polynomials in $y^1,\cdots, y^{l}$, $e^{y^{l+1}}$, $e^{y^{l+2}}$
of the degree $\deg \Gamma_{m}^{ij}(y)=\deg y^i+\deg y^j-\deg y^m.$
\end{lem}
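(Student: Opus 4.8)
The plan is to prove the two assertions of the lemma in turn: the weighted homogeneity of degree $\deg y^i+\deg y^j-\deg y^m$, which is soft, and the polynomiality in $y^1,\dots,y^l,e^{y^{l+1}},e^{y^{l+2}}$, which is the substantive point. For the homogeneity I would read everything off \eqref{df3.13}. On $\widetilde V$ the flow of the Euler field $E=\dsum_{\alpha=1}^{l}d_\alpha y^\alpha\frac{\p}{\p y^\alpha}+\frac1k\frac{\p}{\p y^{l+1}}+\frac1{l-k}\frac{\p}{\p y^{l+2}}$ is just the translation ${\bf x}\mapsto{\bf x}$, $x_{l+1}\mapsto x_{l+1}+\frac{s}{2\pi i\,k}$, $x_{l+2}\mapsto x_{l+2}+\frac{s}{2\pi i\,(l-k)}$; by \eqref{df2.9} and \eqref{df2.11} this multiplies each $y^i$ by $e^{(\deg y^i)s}$, while fixing every $dx_r$ and every constant $(dx_a,dx_b)^\thicksim$. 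Hence $\frac{\p y^i}{\p x_a}$ scales by $e^{(\deg y^i)s}$ and $\frac{\p^2 y^j}{\p x_b\p x_r}$ by $e^{(\deg y^j)s}$, so the right-hand side of \eqref{df3.13} is homogeneous of degree $\deg y^i+\deg y^j$ as a one-form; comparing with $dy^m\mapsto e^{(\deg y^m)s}dy^m$ forces each coefficient $\Gamma_m^{ij}(y)$ to be weighted homogeneous of degree $\deg y^i+\deg y^j-\deg y^m$.

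For the polynomiality, note first that each $\Gamma_m^{ij}$ is the coefficient of the Levi-Civita connection of the $\widetilde W$-invariant metric $g$ in the invariant coordinates $y$, hence a single-valued $\widetilde W$-invariant function on $\mathcal{M}^{(k,k+1)}\setminus\Sigma$, that is, an a priori rational $\widetilde W$-invariant function of $x$ whose only possible poles sit over the reflection hyperplanes $\{(\beta,{\bf x})\in\mathbb Z\}$ forming $\Sigma$. By the Chevalley-type Theorem \ref{thm2.4} it then suffices to prove that $\Gamma_m^{ij}$ is actually a Fourier polynomial that is bounded in the limits \eqref{df2.6} and \eqref{df2.7}; the homogeneity already established upgrades this to the desired weighted homogeneous polynomial in $y^1,\dots,y^l,e^{y^{l+1}},e^{y^{l+2}}$. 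Boundedness in the two limits is the easier half: solving \eqref{df3.14} writes $\Gamma_m^{ij}$ through $g^{ij}$, $\frac{\p g^{ij}}{\p y^m}$ and $\det(g^{ij})$, and feeding in the leading asymptotics from Lemma \ref{lem2.3} (all the $g^{ij}$ and $\frac{\p g^{ij}}{\p y^m}$ being bounded by Lemma \ref{lem3.1}) one checks that numerator and denominator carry the same leading exponential, so the quotient stays bounded.

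The main obstacle is the interior regularity of $\Gamma_m^{ij}$ across the discriminant $\Sigma$: \eqref{df3.14} only exhibits it as $\tfrac12(g^{-1})_{ms}$ times a polynomial, carrying $\det(g^{ij})$ in the denominator. What must really be shown is that the numerator produced by \eqref{df3.13} is divisible by the Jacobian factor $J({\bf x})$ of \eqref{df3.6}, i.e. that the Levi-Civita connection of this discriminant-type metric extends holomorphically across the reflection locus. Here I would exploit the block form \eqref{df3.4}: the $(l+1,l+2)$ block is the constant invertible matrix of determinant $\frac{l+1}{k(l-k)}\neq0$, and $g^{j,l+1},g^{j,l+2}$ are linear in $y^j$, which lets one organize the computation as a descending induction on the weighted degree (using \eqref{df3.15} to fix the symmetric parts) so that $\det(g^{ij})$ is never inverted and no pole is created. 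Verifying this cancellation, rather than the homogeneity or the behaviour at the two cusps, is where the real work lies; once it is in place, Theorem \ref{thm2.4} delivers the stated polynomiality.
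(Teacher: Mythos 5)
You have correctly located the crux of the lemma --- the regularity of $\Gamma_m^{ij}$ across the reflection locus, i.e.\ the divisibility of the Cramer numerator by the Jacobian factor $J({\bf x})$ --- but your proposal does not actually prove it; it only names it and gestures at a ``descending induction on the weighted degree'' organized around the block form \eqref{df3.4}. That sketch does not close. The relations obtained from \eqref{df3.14} with $s=l+1,\,l+2$ (the only rows of $(g^{ij})$ with constant or linear entries) give just two linear conditions on the $l+2$ unknowns $\Gamma_1^{ij},\dots,\Gamma_{l+2}^{ij}$ for each fixed pair $(i,j)$; to isolate all components you must invert either $(g^{sm})$ or the Jacobian matrix $(\p y^r/\p x_a)$, both of which degenerate exactly on $\Sigma$, and you give no mechanism for why the resulting poles cancel. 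Weighted-degree arguments do kill individual components in special cases (this is how Lemma \ref{lem3.6} works for the $\gamma$'s), but for generic $(i,j,m)$ the degree $d_i+d_j-d_m$ is positive and the induction has nothing to bite on.

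The paper's mechanism, which is the missing idea, is anti-invariance: starting from \eqref{df3.13} one solves the linear system whose coefficient matrix is the Jacobian $(\p y^r/\p x_a)$, whose determinant factors by \eqref{df3.6} as a unit times $J({\bf x})=e^{-\sum_{\al\in\Phi^+}\pi i(\al,{\bf x})}\prod_{\beta\in\Phi^+}(e^{2\pi i(\beta,{\bf x})}-1)$. The resulting numerator $P_m^{ij}({\bf x})$ is a Fourier polynomial that is \emph{anti-invariant} under $W$ (because both sides of \eqref{df3.13} are built from $W$-invariants and the Jacobian transforms by the sign character), and the classical Weyl-denominator fact is that every anti-invariant Fourier polynomial is divisible by $J({\bf x})$ in the ring of Fourier polynomials. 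The quotient is then a $W$-invariant Fourier polynomial, membership in $\mathcal{A}$ follows, and Theorem \ref{thm2.4} plus the homogeneity (which your Euler-flow argument handles correctly) gives the statement. Without the anti-invariance/divisibility step, or a genuine substitute for it, your proof has a gap precisely at the point you yourself identify as ``where the real work lies.''
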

\begin{proof} By using \eqref{df3.13} and \eqref{df3.6}, we can represent
$$(\Gamma_1^{ij}(y),\cdots, \Gamma_{l+2}^{ij}(y))\left(\dfrac{\p y^r}{\p x_a}\right)
=\left(\dfrac{\p y^i}{\p x_a}\dfrac{\p^2 y^j}{\p x_b \p x_1}(dx_a, dx_b)^\thicksim,
\cdots,\dfrac{\p y^i}{\p x_a}\dfrac{\p^2 y^j}{\p x_b \p x_{l+2}}(dx_a, dx_b)^\thicksim\right),$$
and
\beq \Gamma_m^{ij}(y)=e^{2\pi i \sum_{\nu=0}^1(d_{i,k+\nu}+d_{j,k+\nu}-d_{m,k+\nu})x_{l+1+\nu}} \dfrac{P_m^{ij}({\bf x})}{J({\bf x})},\label{df3.16}\eeq
where $d_{l+1+\nu,k+\nu}=0$ for $\nu=0,1$ and $P_m^{ij}({\bf x})$ is certain Fourier polynomial in $x_1,\cdots,x_l$.
 As discussed the Lemma 2.2 in \cite{DZ1998}, $P_m^{ij}({\bf x})$ is
anti-invariant with respect to Weyl group $W$ and divisible by $J({\bf x})$. We thus knows $\Gamma_{m}^{ij}(y)\in \mathcal{A}$,
whose homogeneity property is obvious.
\end{proof}

\begin{lem}\label{lem3.6}
For $1\leq i,j\leq l+2$, we have
\beq \mathcal{L}_e(\mathcal{L}_e \Gamma_m^{ij}(y))=0.\eeq
 Equivalently, $\Gamma_m^{ij}(\cdots, y^{k}+\varsigma_1\lambda,y^{k+1}+\varsigma_2\lambda,\cdots)$
are linear in the parameter $\lm$.
\end{lem}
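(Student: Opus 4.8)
The plan is to run the same weighted-homogeneity argument that proves Lemma~\ref{alem3.3}, now applied to the connection coefficients. First I observe that the Euler vector field $E=\sum_{\alpha=1}^{l}d_\alpha y^\alpha\partial_{y^\alpha}+\frac1k\partial_{y^{l+1}}+\frac1{l-k}\partial_{y^{l+2}}$ satisfies $[E,e]=-e$, because $d_k=d_{k+1}=1$ by \eqref{df2.12}; hence $\mathcal{L}_e$ lowers the weighted degree by exactly $1$, and since $k,k+1\le l$ the operator $\mathcal{L}_e=\varsigma_1\partial_{y^k}+\varsigma_2\partial_{y^{k+1}}$ preserves $\mathcal{A}$. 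By Lemma~\ref{lem3.5} each $\Gamma_m^{ij}(y)$ lies in $\mathcal{A}$ and is weighted homogeneous of degree $d_i+d_j-d_m$, so $\mathcal{L}_e^2\Gamma_m^{ij}$ is again a homogeneous member of $\mathcal{A}$, of degree $d_i+d_j-d_m-2$. All generators $y^1,\dots,y^l,e^{y^{l+1}},e^{y^{l+2}}$ of $\mathcal{A}$ have strictly positive degree, so any homogeneous element of negative degree vanishes and the only homogeneous elements of degree $0$ are constants. Using \eqref{df2.12} together with $d_{l+1}=d_{l+2}=0$ one has $d_i+d_j-d_m\le 2$, with equality precisely when $d_i=d_j=1$ and $d_m=0$, i.e. $i,j\in\{k,k+1\}$ and $m\in\{l+1,l+2\}$. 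Thus $\mathcal{L}_e^2\Gamma_m^{ij}=0$ automatically except in these finitely many top-degree cases, where it is a priori a constant.

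In the remaining cases write $\Gamma_m^{ij}=A\,(y^k)^2+B\,y^ky^{k+1}+C\,(y^{k+1})^2+(\text{terms carrying another generator})$; since $\mathcal{L}_e$ only differentiates $y^k,y^{k+1}$, applying $\mathcal{L}_e^2$ annihilates every monomial except the three pure quadratics and gives $\mathcal{L}_e^2\Gamma_m^{ij}=2(\varsigma_1^2A+\varsigma_1\varsigma_2B+\varsigma_2^2C)$. So the lemma reduces to showing $A=B=C=0$. The symmetric part is immediate: because $\partial_{y^m}$ commutes with $\mathcal{L}_e$, relation \eqref{df3.15} gives $\mathcal{L}_e^2(\Gamma_m^{ij}+\Gamma_m^{ji})=\partial_{y^m}\mathcal{L}_e^2 g^{ij}=0$ by Lemma~\ref{alem3.3}. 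Equivalently, $g^{ij}$ carries no polynomial dependence on $y^{l+1},y^{l+2}$, so every surviving term of $\partial_{y^m}g^{ij}$ with $m\in\{l+1,l+2\}$ retains a factor $e^{y^{l+1}}$ or $e^{y^{l+2}}$ and hence contributes no pure quadratic in $y^k,y^{k+1}$. This already settles the diagonal cases $i=j\in\{k,k+1\}$.

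The main obstacle is the off-diagonal case $\{i,j\}=\{k,k+1\}$, where the symmetric identity only yields $\mathcal{L}_e^2\Gamma_m^{k,k+1}=-\mathcal{L}_e^2\Gamma_m^{k+1,k}$ and the antisymmetric part must be controlled separately. Here I would return to the explicit expression \eqref{df3.16}, namely $\Gamma_m^{ij}=e^{2\pi i\sum_{\nu}(d_{i,k+\nu}+d_{j,k+\nu}-d_{m,k+\nu})x_{l+1+\nu}}P_m^{ij}(\mathbf{x})/J(\mathbf{x})$ with $P_m^{ij}$ anti-invariant and divisible by $J$, and carry out a leading-weight analysis of the Fourier polynomial $P_{l+1}^{k,k+1}/J$ with respect to the partial ordering of weights used in Lemma~\ref{lem3.2}, checking that the weights $2\omega_k$, $\omega_k+\omega_{k+1}$, $2\omega_{k+1}$ — the only ones that can produce a pure quadratic in $y^k,y^{k+1}$ after passing to the $y$-coordinates — appear with coefficient zero, so that no $(y^k)^2$, $y^ky^{k+1}$ or $(y^{k+1})^2$ survives. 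Alternatively one may extract the antisymmetric combination from \eqref{df3.14} in the form $g^{sm}(\Gamma_m^{ij}-\Gamma_m^{ji})=g^{im}\partial_{y^m}g^{js}-g^{jm}\partial_{y^m}g^{is}$ and track its degree-two pure part with the coefficient bookkeeping of Lemmas~\ref{lem3.2} and~\ref{alem3.3}. I expect this explicit leading-order computation for the off-diagonal piece to be the only genuinely laborious step, since, unlike the symmetric part, it is not directly governed by Lemma~\ref{alem3.3}.
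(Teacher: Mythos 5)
Your reduction of the lemma to finitely many top-degree cases is correct and matches the paper: since $\Gamma_m^{ij}(y)\in\mathcal{A}$ is weighted homogeneous of degree $d_i+d_j-d_m$ (Lemma \ref{lem3.5}), all generators have positive degree, and $\mathcal{L}_e$ lowers degree by $1$, the quantity $\mathcal{L}_e^2\Gamma_m^{ij}$ vanishes automatically unless $i,j\in\{k,k+1\}$ and $m\in\{l+1,l+2\}$, in which case it is a priori the constant $2(\varsigma_1^2A+\varsigma_1\varsigma_2B+\varsigma_2^2C)$ built from the pure-quadratic coefficients of $\Gamma_m^{ij}$. Your treatment of the diagonal cases via \eqref{df3.15}, i.e. $\Gamma_m^{ii}=\tfrac12\partial_{y^m}g^{ii}$ combined with Lemma \ref{alem3.3}, is also essentially the paper's computation of $\Gamma_{l+i}^{k+\nu,k+\nu}=\tfrac12\partial_{y^{l+i}}g^{k+\nu,k+\nu}$.

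However, there is a genuine gap: the off-diagonal case $\{i,j\}=\{k,k+1\}$, $m\in\{l+1,l+2\}$, which you yourself identify as carrying all the real content, is not proved — you only name two candidate strategies without executing either, and this is precisely the step where the lemma could fail. The paper closes this case not by a leading-weight analysis of $P_{l+1}^{k,k+1}({\bf x})/J({\bf x})$ (your option (a), which would be quite laborious) and not by the antisymmetrized combination (your option (b)), but by taking the full relation \eqref{df3.14} with $(s,i,j)=(k,k,k+1)$ and $(s,i,j)=(k+1,k,k+1)$. The point is that by \eqref{df3.4} one has $g^{k,l+1}=y^k$, $g^{k,l+2}=0$, $g^{k+1,l+1}=0$, $g^{k+1,l+2}=y^{k+1}$, so these two choices of $s$ isolate $y^k\,\Gamma_{l+1}^{k,k+1}$ and $y^{k+1}\,\Gamma_{l+2}^{k,k+1}$ respectively on the left, expressing them through $g^{ab}$, $\partial_m g^{ab}$ and the already-controlled $\Gamma_j^{k,k+1}$ with $1\le j\le l$; degree counting together with Lemmas \ref{lem3.2} and \ref{alem3.3} then shows the right-hand sides contain no pure cubic in $y^k,y^{k+1}$, whence $A=B=C=0$ for $\Gamma_{l+1}^{k,k+1}$ and $\Gamma_{l+2}^{k,k+1}$, and \eqref{df3.15} transfers the conclusion to $\Gamma_{l+1}^{k+1,k}$ and $\Gamma_{l+2}^{k+1,k}$. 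Your sketch (b) is in the right spirit, but until this computation is actually carried out the proof is incomplete.
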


\begin{proof}By the degrees, it suffices to show that
$$\mathcal{L}_e(\mathcal{L}_e \Gamma_{l+i}^{k+\nu,\ k+t}(y))=0,\quad i=1,2,\quad \nu,t=0,1.$$

Observe that
 \eqa \Gamma_{l+i}^{k+\nu,k+\nu}(y)&=&\dfrac{\p x_r}{\p y^{l+i}}\dfrac{\p y^{k+\nu}}{\p x_p}
 \dfrac{\p^2 y^{k+\nu}}{\p x_q \p x_r}(dx_p, dx_q)^\thicksim \nn \\
 &=&\dfrac{\p y^{k+\nu}}{\p x_p}\dfrac{\p}{\p y^{l+i}}(\dfrac{\p y^{k+\nu}}{\p x_q})(dx_p, dx_q)^\thicksim \nn\\
&=&\frac{1}{2} \frac{\p }{\p y^{l+i}}g^{k+\nu,k+\nu}(y),\nn\eeqa
 and using \eqref{ddf3.11}, then
 $$\mathcal{L}_e(\mathcal{L}_e \Gamma_{l+i}^{k+\nu,\ k+\nu}(y))=0$$
  for $\nu=0, 1$ and $i=1,2.$

By choosing  $s=k,i=k,j=k+1$ in \eqref{df3.14} and using \eqref{df3.4}, we get
\eqa
y^k\,\Gamma_{l+1}^{k,k+1}(y)&& =g^{k,l+1}(y)\Gamma_{l+1}^{k,k+1}(y)+g^{k,l+2}(y)\Gamma_{l+2}^{k,k+1}(y)\nn\\
&&=g^{km}(y)\dfrac{\p g^{k+1,k}(y)}{\p y^m}-\frac{1}{2}g^{k+1,m}(y)\dfrac{\p g^{kk}(y)}{\p y^m}-\dsum_{j=1}^l
g^{kj}(y)\Gamma_j^{k,k+1}(y).\nn \eeqa
Repeat using the degrees and Lemma \ref{lem3.2} and Lemma \ref{alem3.3}, we thus conclude
 $$\mathcal{L}_e(\mathcal{L}_e \Gamma_{l+1}^{k, k+1}(y))=0.$$
 Furthermore, with the help of \eqref{df3.15}, we have $\mathcal{L}_e(\mathcal{L}_e \Gamma_{l+1}^{k+1,\ k}(y))=0.$

Similarly, by choosing $s=k+1,i=k,j=k+1$ in \eqref{df3.14} and using \eqref{df3.4}, \eqref{df3.16} and \eqref{df3.15}, we have
\eqa
y^{k+1}\,\Gamma_{l+2}^{k,k+1}(y)&=&g^{k,l+1}(y)\,\Gamma_{l+1}^{k,k+1}(y)+g^{k+1,l+2}(y)\Gamma_{l+2}^{k,k+1}(y)\nn\\
&=&\frac{1}{2}g^{km}(y)\dfrac{\p g^{k+1,k+1}(y)}{\p y^m}-\dsum_{j=1}^l
g^{k+1,j}(y)\Gamma_j^{k,k+1}(y)\nn
\eeqa
 and
 \beq \Gamma_{l+2}^{k+1,k}(y)=\dfrac{\p g^{k+1,k}}{\p y^{l+2}}-\Gamma_{l+2}^{k,k+1}(y).\nn\eeq
So
$$\mathcal{L}_e(\mathcal{L}_e \Gamma_{l+2}^{k, k+1}(y))=\mathcal{L}_e(\mathcal{L}_e \Gamma_{l+2}^{k+1, k}(y))=0.$$
This completes the proof of the lemma.\end{proof}

\begin{lem}\label{lem3.3} Setting
\beq \eta^{ij}(y)=\mathcal{L}_e g^{ij}(y) \label{dfeta}\eeq
and denoting $\mathcal{R}_{k,k+1}={\mathcal R}_{1}\cup {\mathcal R}_{2}$, then we have

(1) If $\alpha_i$ and $\alpha_j$ belong to different components of $\mathcal{R}_{k,k+1}$, then $\eta^{ij}(y)=0$;

(2) The block $\eta_{(t)}=(\eta^{ij}(y))|_{\alpha_i,\alpha_j\in \mathcal{R}_t}$ of the matrix $(\eta^{ij}(y))$
 corresponding to any branch $\mathcal{R}_t$ has triangular form. The antidiagonal elements of $\eta_{(t)}$ consists
 of the constant numbers $\eta^{ii^*}$ for $\alpha_i\in \mathcal{R}_t$, where $t=1,2$;

 (3) $\eta^{a,l+1}(y)=\varsigma_1\delta_{a,k}, \quad \eta^{a,l+2}(y)=\varsigma_2\delta_{a,k+1}$ for $ a=1,\cdots, l+2$.
\end{lem}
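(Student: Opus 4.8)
The three assertions all describe the structure of $\eta^{ij}=\mathcal{L}_e g^{ij}$, so I would begin with a single reduction that feeds all of them. By Lemma \ref{lem3.1} each $g^{ij}$ is weighted homogeneous of degree $d_i+d_j$ in $\widetilde{y}_1,\dots,\widetilde{y}_l,e^{y^{l+1}},e^{y^{l+2}}$, and since $\deg y^k=\deg y^{k+1}=1$ the operator $\mathcal{L}_e$ lowers the weighted degree by one; hence $\eta^{ij}$ is weighted homogeneous of degree $d_i+d_j-1$. Moreover, by Lemma \ref{alem3.3} (i.e. \eqref{ddf3.12}) and Corollary \ref{cor3.4}, $g^{ij}$ is at most affine in the pair $(y^k,y^{k+1})$, so $\eta^{ij}=\varsigma_1\,\partial_{y^k}g^{ij}+\varsigma_2\,\partial_{y^{k+1}}g^{ij}$ contains neither $y^k$ nor $y^{k+1}$ and is a polynomial only in $\{\widetilde{y}_s:s\ne k,k+1\}\cup\{e^{y^{l+1}},e^{y^{l+2}}\}$. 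By Lemma \ref{lem3.1} the $x_{l+1},x_{l+2}$ dependence enters only through $e^{y^{l+1}},e^{y^{l+2}}$, so there is no bare degree-zero generator; all remaining generators have strictly positive degree ($d_s>0$ for $s\le l$, while $e^{y^{l+1}},e^{y^{l+2}}$ carry degrees $1/k$ and $1/(l-k)$ by Corollary \ref{cor2.6}). Consequently a homogeneous such polynomial of negative degree vanishes identically, and one of degree $0$ is a constant. This single observation disposes of most cases.

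For part (3) I would simply differentiate the closed formulas \eqref{df3.4}. Since $g^{j,l+1}=\zeta_j d_j y^j$ while $g^{l+1,l+1},g^{l+1,l+2},g^{l+2,l+2}$ are constants, applying $\mathcal{L}_e=\varsigma_1\partial_{y^k}+\varsigma_2\partial_{y^{k+1}}$ and using $d_k=d_{k+1}=1$ from \eqref{df2.12} together with the values of $\zeta_j$ gives $\eta^{a,l+1}=\varsigma_1\delta_{a,k}$ at once; the computation for $\eta^{a,l+2}$ is identical, starting from $g^{j,l+2}=(1-\zeta_j)d_j y^j$.

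For part (2), within a single branch $\mathcal{R}_t$ the involution $j\mapsto j^*$ of \eqref{df2.13} satisfies the duality $d_j+d_{j^*}=1$ of \eqref{df2.14}. Thus for $\alpha_i,\alpha_j\in\mathcal{R}_t$ the degree $d_i+d_j-1$ of $\eta^{ij}$ equals $0$ exactly on the antidiagonal $j=i^*$ (because $j\mapsto d_j$ is strictly monotone on each branch), and is strictly negative precisely when $j$ lies before $i^*$ in the branch, e.g. when $i+j<k$ for $\mathcal{R}_1$. By the positivity remark of the first paragraph, these strictly-negative-degree entries vanish while the antidiagonal entries $\eta^{ii^*}$ are constants; this is exactly the triangular form with constant antidiagonal.

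Part (1) is the genuinely hard step, and I expect it to be the \emph{main obstacle}: for $\alpha_i\in\mathcal{R}_1$ and $\alpha_j\in\mathcal{R}_2$ the degree $d_i+d_j-1$ can be nonnegative (e.g. $d_{k-1}+d_{k+2}-1=1-\tfrac1k-\tfrac1{l-k}\ge 0$), so homogeneity alone does not force $\eta^{ij}=0$. Here I would return to the explicit expansion \eqref{dff3.5}, namely $g^{ij}=e^{2\pi i[(d_{i,k}+d_{j,k})x_{l+1}+(d_{i,k+1}+d_{j,k+1})x_{l+2}]}\bigl(c_{ij}\,y_i({\bf x})y_j({\bf x})+\beta_{ij}({\bf x})\bigr)$, and argue as in Lemma \ref{lem3.2}: the top term $c_{ij}\widetilde{y}_i\widetilde{y}_j$ involves neither $\widetilde{y}_k$ nor $\widetilde{y}_{k+1}$ (since $i,j\ne k,k+1$) and so is killed by $\mathcal{L}_e$, while every other Fourier mode of $\beta_{ij}$ has $\mathbf{x}$-weight strictly below $\omega_i+\omega_j$ in the partial order of \cite{Hum}. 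Re-expanding $g^{ij}$ in the monomial basis $\prod_m\widetilde{y}_m^{c_m}$, any monomial surviving $\mathcal{L}_e$ must be linear in $\widetilde{y}_k$ or $\widetilde{y}_{k+1}$, and I would rule such a monomial out by imposing simultaneously (i) the weight bound $\omega_k+\sum_{s}a_s\omega_s\preceq\omega_i+\omega_j$ and (ii) the two exponent identities $\sum_m c_m d_{m,k}+b_1=d_{i,k}+d_{j,k}$ and $\sum_m c_m d_{m,k+1}+b_2=d_{i,k+1}+d_{j,k+1}$, read off from the $x_{l+1},x_{l+2}$ prefactor. The crux is precisely this bookkeeping with the explicit values \eqref{df2.8} of the $d_{m,k}$: one must check that the weight inequality and the two exponent identities admit no solution in nonnegative exponents once $i$ and $j$ lie in different branches, so that $\partial_{y^k}g^{ij}=\partial_{y^{k+1}}g^{ij}=0$ and hence $\eta^{ij}=0$.
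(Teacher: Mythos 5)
Your treatment of parts (2) and (3) is correct and is essentially the paper's own argument: (3) follows by applying $\mathcal{L}_e$ directly to the closed formulas \eqref{df3.4}, and (2) follows from the degree count $\deg\eta^{ij}=d_i+d_j-d_k=d_i+d_j-1$, the strict monotonicity of $j\mapsto d_j$ on each branch, the duality \eqref{df2.14}, and the fact that every generator ($y^1,\dots,y^l$, $e^{y^{l+1}}$, $e^{y^{l+2}}$) has strictly positive degree, so that negative degree forces vanishing and degree zero forces constancy. (Your write-up of (2) is in fact cleaner than the paper's, which contains a typo at exactly this point.) For part (1) you have also chosen the same route as the paper, namely reducing to the claim that no monomial of $g^{ij}$ with $\alpha_i\in\mathcal{R}_1$, $\alpha_j\in\mathcal{R}_2$ can be linear in $\widetilde{y}_k$ or $\widetilde{y}_{k+1}$, to be ruled out via the partial order on weights. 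But you stop precisely at the decisive step: you write that one \emph{must check} that the weight inequality and exponent identities admit no nonnegative solution, without performing that check. That verification is the entire content of part (1), so as written the argument is incomplete.

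The paper closes this gap concretely. If $\beta_{ij}(\mathbf{x})$, expanded in $y_1(\mathbf{x}),\dots,y_l(\mathbf{x})$, contained a monomial $y_1^{p_1}\cdots y_l^{p_l}$ with $p_k=1$, then $\omega_i+\omega_j=\sum_s p_s\omega_s+\sum_s q_s\alpha_s$ with all $p_s, q_s$ nonnegative integers. Pairing with $\omega_1$, using $(\omega_1,\omega_s)=\tfrac{l-s+1}{l+1}$ and $(\omega_1,\alpha_s)=\delta_{1s}$, gives $l+1+k-i-j=\sum_{s\ne k}p_s(l-s+1)+(l+1)q_1$; since $k-i-j<0$ the left-hand side is smaller than $l+1$, which forces $q_1=0$ and then $p_s=0$ for $s\le i$. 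Pairing the resulting relation successively with $\alpha_1,\dots,\alpha_i$ yields $q_2=\cdots=q_i=0$ and then $q_{i+1}=-1$, contradicting nonnegativity. Hence $\partial_{y^k}g^{ij}=0$, and the same computation with $k+1$ in place of $k$ gives $\partial_{y^{k+1}}g^{ij}=0$, so $\eta^{ij}=0$. Note also that the two exponent identities you propose to extract from the $e^{y^{l+1}},e^{y^{l+2}}$ prefactor are automatically satisfied and carry no information; the whole obstruction lives in the weight lattice of $V$, so only the single weight relation above needs to be analyzed.
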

\begin{proof} (1) Let $\alpha_i\in\mathcal{R}_1$ and $\alpha_j\in\mathcal{R}_2$, i.e. $1\leq i<k$ and $k+1<j\leq l$.
As discussed above, if $\beta_{ij}({\bf x})$ as a polynomial in $y_1({\bf x}),\cdots, y_l({\bf x})$
contains a monomial $y_1^{p_1}\cdots y_l^{p_l}$ with $p_k=1$, then
\beq\omega_i+\omega_j=p_1\omega_1+\cdots+p_l\omega_l+\dsum_{s=1}^lq_s\alpha_s \label{df3.10}\eeq
for some nonnegative integers $q_1,\cdots, q_l$.  We multiply $\eqref{df3.10}$ by $\omega_1$ and obtain
$$l+1+k-i-j-\dsum_{s\ne k} p_s (l-s-1)=(l+1) q_1.$$
Since $k-i-j<0$, then we have $q_1=0$ and
$$ l-i+1-(j-k)=\dsum_{s\ne k} p_s (l-s-1) $$
which yields that $p_s=0$ for $s=1,\cdots, i$. So \eqref{df3.10} becomes
 \beq\omega_i+\omega_j=p_{i+1}\omega_{i+1}+\cdots+p_l\omega_l+\dsum_{s=2}^lq_s\alpha_s. \label{df3.11}\eeq
We multiply \eqref{df3.11} by $\alpha_1,\cdots,\alpha_{i}$ and get $q_2=\cdots=q_i=0$, $q_{i+1}=-1$, which
contradicts nonnegativity of $q$'s. So in this case $\dfrac{\p}{\p y^k} g^{ij}(y)=0$.
Similarly, one can show that $\dfrac{\p}{\p y^{k+1}} g^{ij}(y)=0$.  We thus complete the proof of the first statement.

(2) Observe that in any component of $\mathcal{R}_{k,k+1}$, the numbers $d_i$ are distinct and ordered monotonically and
$\deg \eta^{ij}(y)=d_i+d_j-d_k$. We thus conclude that
$\eta^{ij}(y)=0$ when $d_i+d_j=d_k$, and $\eta^{ij}(y)=$ constant when $d_i+d_j=d_k$ which happens if the labels $i$ and
$j$ are dual to each other in the sense of \eqref{df2.14}.

(3) Obviously, the third statement follows from \eqref{df3.4}.
\end{proof}

\begin{prop}\label{prop3.4} If $\varsigma_1\varsigma_2\ne 0$, then the determinant of $(\eta^{ij}(y))$ is a nonzero constant.
\end{prop}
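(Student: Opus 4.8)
The plan is to exploit the triangular structure of $(\eta^{ij}(y))$ established in Lemma~\ref{lem3.3} and compute the determinant by reducing to the explicit antidiagonal constants. First I would partition the index set $\{1,\dots,l+2\}$ according to the duality \eqref{df2.13}: the pairs $(k,l+1)$ and $(k+1,l+2)$ together with the dual pairs $(i,i^*)$ coming from the two components $\mathcal{R}_1=\{\alpha_1,\dots,\alpha_{k-1}\}$ and $\mathcal{R}_2=\{\alpha_{k+2},\dots,\alpha_l\}$. By part~(1) of Lemma~\ref{lem3.3} the matrix decomposes into blocks indexed by these components, with no mixing between $\mathcal{R}_1$ and $\mathcal{R}_2$, so the determinant factors as a product over the blocks.

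Next I would handle the ``new'' directions $l+1,l+2$ and the central indices $k,k+1$. From part~(3) of Lemma~\ref{lem3.3} we have $\eta^{a,l+1}=\varsigma_1\delta_{a,k}$ and $\eta^{a,l+2}=\varsigma_2\delta_{a,k+1}$, so the rows/columns labelled $l+1$ and $l+2$ are supported only on the entries pairing them with $k$ and $k+1$ respectively. I would perform a Laplace expansion (or row/column reordering) along these two rows: the only nonzero entries $\eta^{k,l+1}=\varsigma_1$ and $\eta^{k+1,l+2}=\varsigma_2$ pick out the $2\times 2$ antidiagonal block on the dual pairs $\{k,l+1\}$ and $\{k+1,l+2\}$, contributing a factor $\pm\varsigma_1\varsigma_2$ (after accounting for the diagonal entries $\eta^{l+1,l+1}$, $\eta^{l+2,l+2}$, which vanish by degree since $\deg\eta^{ij}=d_i+d_j-1$ and $d_{l+1}=d_{l+2}=0$ forces these to be constants that must be zero once the cross terms are extracted). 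This is exactly where $\varsigma_1\varsigma_2\neq 0$ enters: if either vanished, one of these rows would be identically zero and the determinant would vanish.

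For the remaining indices in $\mathcal{R}_1$ and $\mathcal{R}_2$, part~(2) of Lemma~\ref{lem3.3} tells me each block $\eta_{(t)}$ is triangular with respect to the ordering by $d_i$, and its antidiagonal consists of the constants $\eta^{ii^*}$. The determinant of a triangular-antidiagonal matrix is $\pm$ the product of its antidiagonal entries, so each block contributes $\prod_{\alpha_i\in\mathcal{R}_t}\eta^{ii^*}$ up to sign. Thus the total determinant is, up to an overall sign and the factor $\varsigma_1\varsigma_2$, the product of all the constants $\eta^{ii^*}$, which is manifestly a nonzero constant provided I verify each $\eta^{ii^*}\neq 0$.

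The main obstacle will be the last point: showing that none of the antidiagonal constants $\eta^{ii^*}$ vanishes. I would compute these explicitly from \eqref{dff3.5} and Lemma~\ref{lem3.2}, using that the ``top'' monomial $e^{2\pi i(\omega_i+\omega_{i^*},\mathbf{x})}$ in $\beta_{ii^*}(\mathbf{x})$ has coefficient $-d_{i,i^*}=-(\omega_i,\omega_{i^*})$, which is strictly negative, so the surviving constant $\eta^{ii^*}$ is a nonzero multiple of $\varsigma$-weighted data and cannot cancel. Care is needed here to track the correct monomial (the one of top weight $\omega_i+\omega_{i^*}$ whose exponent cancels against $d_{i,k},d_{i,k+1}$ after multiplying by the exponential prefactor) and to confirm that applying $\mathcal{L}_e$ produces a genuinely nonzero constant rather than killing it by degree; once that positivity/nonvanishing is in hand the result follows immediately.
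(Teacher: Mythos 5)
Your structural reduction --- block-decomposing $(\eta^{ij}(y))$ via Lemma~\ref{lem3.3}, expanding along the rows $l+1$, $l+2$ to extract the factor $\varsigma_1\varsigma_2$, and reading off the determinant as $\pm\varsigma_1\varsigma_2\prod\eta^{ii^*}$ from the triangular blocks --- is exactly what the paper does (it simply writes the resulting formula $\det(\eta^{ij}(y))=(-1)^{\frac{k(k+1)+(l-k)(l-k+1)}{2}}\prod_{i=1}^l\eta^{ii^*}$ directly from Lemma~\ref{lem3.3}). The real content of the proposition is the final step, the nonvanishing of the antidiagonal constants $\eta^{ii^*}$ for $\alpha_i\in\mathcal{R}_1\cup\mathcal{R}_2$, and this is where your argument has a genuine gap.

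You propose to get $\eta^{ii^*}\ne 0$ from the fact that the top-weight monomial $e^{2\pi i(\omega_i+\omega_{i^*},{\bf x})}$ in $\beta_{ii^*}({\bf x})$ carries the coefficient $-d_{i,i^*}=-(\omega_i,\omega_{i^*})<0$. But that coefficient is attached to the monomial $y_iy_{i^*}$ (this is precisely the content of Lemma~\ref{lem3.2}), and for $\alpha_i\in\mathcal{R}_1\cup\mathcal{R}_2$ one has $i,i^*\notin\{k,k+1\}$, so the operator $\mathcal{L}_e=\varsigma_1\,\p/\p y^k+\varsigma_2\,\p/\p y^{k+1}$ annihilates $y_iy_{i^*}$ outright: the top-weight term contributes nothing to $\eta^{ii^*}=\mathcal{L}_e g^{ii^*}(y)$. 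What actually survives is the coefficient of the strictly lower-weight monomial $y_k$ (for $\alpha_i\in\mathcal{R}_1$), respectively $y_{k+1}$ (for $\alpha_i\in\mathcal{R}_2$), in the expansion of $\beta_{ii^*}({\bf x})$ as a polynomial in $y_1({\bf x}),\dots,y_l({\bf x})$, i.e.\ the quantity $\frac{1}{4\pi^2}\,\p_{y_k({\bf x})}\beta_{i(k-i)}({\bf x})$ coming from \eqref{dff3.5}. Showing that this lower-order coefficient is nonzero is not a formality; the paper does not compute it here but identifies it, after dividing by $\varsigma_1$ (resp.\ $\varsigma_2$), with the corresponding constant $\eta^{i(k-i)}$ (resp.\ $\eta^{i(l+k+2-i)}$) arising in the Dubrovin--Zhang construction for $\widetilde{W}^{(k)}(A_l)$ (resp.\ $\widetilde{W}^{(k+1)}(A_l)$), whose nonvanishing is Corollary~2.3 of \cite{DZ1998}. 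To close your argument you would need either that citation or an explicit computation of $\p_{y_k({\bf x})}\beta_{i(k-i)}({\bf x})$; the sign of the top-weight coefficient $-d_{i,i^*}$ does not by itself control it.
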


\begin{proof}By using Lemma \ref{lem3.3}, we know that
\beq \det(\eta^{ij}(y)=(-1)^{\frac{k(k+1)+(l-k)(l-k+1)}{2}}\displaystyle\prod_{i=1}^l \eta^{ii^*}.\eeq
It suffices to show that $\eta^{ii^*}$ are nonzero constants for $i=1,\cdots l$.

 For a fixed $1\leq i <k$, with the use of \eqref{df3.5} we obtain
\eqa \eta^{ii*}&=&\eta^{i(k-i)}(y)=\mathcal{L}_eg^{i(k-i)}(y)\nn\\
&=&
 e^{2\pi i (d_{k,k}x_{l+1}+d_{k,k+1}x_{l+2})}\mathcal{L}_e(c_{i(k-i)}y_i({\bf x})y_{k-i}({\bf x})+\beta_{i(k-i)}({\bf x}))
 \nn\\
 &=& e^{2\pi i (d_{k,k}x_{l+1}+d_{k,k+1}x_{l+2})}\mathcal{L}_e \beta_{i(k-i)}({\bf x}) \nn\\
 &=& \dfrac{\p}{\p y_k({\bf x})}\beta_{i(k-i)}({\bf x})+e^{2\pi i \left(\frac{k}{l+1}x_{l+1}-\frac{l-k}{l+1}x_{l+2}\right)}
 \dfrac{\p}{\p y_{k+1}({\bf x})}\beta_{i(k-i)}({\bf x}).\nn
  \eeqa
 Since $\eta^{ii^*}$ is a constant, we thus have
\eqa \frac{1}{\varsigma_1}\eta^{ii*}&=& \dfrac{\p}{\p y_k({\bf x})}\beta_{i(k-i)}({\bf x})=\frac{1}{4\pi^2}
 \dfrac{\p}{\p y_k({\bf x})}\dsum_{p,q=1}^{l}\dfrac{\p y_i({\bf x})}{\p x_p}\dfrac{\p
y_{k-i}({\bf x})}{\p x_q}(\omega_p,\omega_q)\nn
\eeqa
which coincides with the nonzero constant $\eta^{i(k-i)}$ used in the case $\widetilde{W}^{(k)}(A_l)$
(e.g., please see the Corollary 2.3 in \cite{DZ1998}).

Similarly, for a fixed $k+1<i\leq l$, we have
\eqa \frac{1}{\varsigma_2}\eta^{ii*}&=&\eta^{i(l+k+2-i)}(y)=\mathcal{L}_eg^{i(l+k+2-i)}(y)\nn\\
&=&
 e^{2\pi i (d_{k+1,k}x_{l+1}+d_{k+1,k+1}x_{l+2})}\mathcal{L}_e(c_{i(l+k+2-i)}y_i({\bf x})y_{k-i}({\bf x})+\beta_{i(k-i)}({\bf x}))
 \nn\\
 &=& e^{2\pi i (d_{k+1,k}x_{l+1}+d_{k+1,k+1}x_{l+2})}\mathcal{L}_e \beta_{i(l+k+2-i)}({\bf x}) \nn\\
 &=& e^{2\pi i \left(\frac{l-k}{l+1}x_{l+2}-\frac{k}{l+1}x_{l+1}\right)}
 \dfrac{\p}{\p y_{k}({\bf x})}\beta_{i(k-i)}({\bf x})+\dfrac{\p}{\p y_{k+1}({\bf x})}\beta_{i(l+k+2-i)}({\bf x})\nn\\
 &=& \dfrac{\p}{\p y_{k+1}({\bf x})}\beta_{i(l+k+2-i)}({\bf x})=\frac{1}{4\pi^2}
 \dfrac{\p}{\p y_{k+1}({\bf x})}\dsum_{p,q=1}^{l}\dfrac{\p y_i({\bf x})}{\p x_p}\dfrac{\p
y_{l+k+2-i}({\bf x})}{\p x_q}(\omega_p,\omega_q)\nn
  \eeqa
which is exactly the nonzero constant $\eta^{i(l+k+2-i)}$ used in the case $\widetilde{W}^{(k+1)}(A_l)$
(\cite{DZ1998}).

Observe that $\eta^{kk^*}=\varsigma_1$ and $\eta^{(k+1)(k+1)^*}=\varsigma_2$ in Lemma \ref{lem3.3}, we thus complete the proof of this proposition.
\end{proof}

According to Lemma D.1 in \cite{Du1996} (or see Lemma 3.3 in \cite{DSZZ2019})
and using Lemma \ref{lem3.3}, Propsition \ref{prop3.4} and Lemma \ref{lem3.6}, we have

\begin{thm}\label{thm3.7}$g^{ij}(y)$ and $\eta^{ij}(y)$ form a flat pencil of metrics, i.e.,
the metric
\beq g^{ij}(y) + \lambda \eta^{ij}(y)\nn\eeq
is flat for arbitrary $\lambda$ and
the Levi-Civit\`a connection for this metric has the form
$\Gamma_{m}^{ij}(y) + \lambda \gamma_{m}^{ij}(y).$ Here
$\gamma_{m}^{ij}(y):=\mathcal{L}_e\Gamma_{m}^{ij}(y)$
 are the contravariant components of the Levi-Civita connection for the
  metric $(\eta^{ij}(y))$.
 \end{thm}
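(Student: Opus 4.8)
The plan is to check that the pair $\bigl(g^{ij}(y),\eta^{ij}(y)\bigr)$ meets the hypotheses of Dubrovin's flat-pencil criterion (Lemma D.1 in \cite{Du1996}, reproduced as Lemma 3.3 in \cite{DSZZ2019}) and then to read off its conclusion. In the form I would use it, that criterion says: if $g^{ij}$ is a flat metric with contravariant Levi-Civita connection $\Gamma_m^{ij}$, if $\eta^{ij}:=\mathcal{L}_e g^{ij}$ is nondegenerate, and if both $g^{ij}$ and $\Gamma_m^{ij}$ are affine along the flow of $e$, i.e. $\mathcal{L}_e\mathcal{L}_e g^{ij}=0$ and $\mathcal{L}_e\mathcal{L}_e\Gamma_m^{ij}=0$, then $g^{ij}$ and $\eta^{ij}$ form a flat pencil whose connection is $\Gamma_m^{ij}+\lambda\,\gamma_m^{ij}$ with $\gamma_m^{ij}=\mathcal{L}_e\Gamma_m^{ij}$. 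Thus the proof reduces to matching each hypothesis against a result already established.

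First I would note that $g^{ij}(y)$ is flat: by construction it is the constant, hence curvature-free, metric $(dx_a,dx_b)^{\thicksim}$ of \eqref{df3.01}--\eqref{df3.02} rewritten in the coordinates $y^1,\dots,y^{l+2}$, and by Lemma \ref{lem3.1} the projection \eqref{df3.2} is a local diffeomorphism away from the reflection hyperplanes, so $g^{ij}(y)$ is just that flat metric in new coordinates, with contravariant connection $\Gamma_m^{ij}(y)$ defined through \eqref{df3.13}. Nondegeneracy of $\eta^{ij}(y)=\mathcal{L}_e g^{ij}(y)$ (for $\varsigma_1\varsigma_2\ne 0$) is exactly Proposition \ref{prop3.4}, which itself rests on the triangular shape and the constant antidiagonal of $(\eta^{ij})$ furnished by Lemma \ref{lem3.3}. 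Finally, the two affine-linearity conditions are precisely Corollary \ref{cor3.4} (equivalently Lemma \ref{alem3.3}), giving $\mathcal{L}_e\mathcal{L}_e g^{ij}=0$, and Lemma \ref{lem3.6}, giving $\mathcal{L}_e\mathcal{L}_e\Gamma_m^{ij}=0$.

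To make the mechanism transparent rather than invoke the criterion as a black box, I would spell out the underlying translation argument. Write $e=\varsigma_1\partial_{y^k}+\varsigma_2\partial_{y^{k+1}}$ and let $\psi_\lambda\colon y\mapsto y+\lambda e$ denote the shift $y^k\mapsto y^k+\varsigma_1\lambda$, $y^{k+1}\mapsto y^{k+1}+\varsigma_2\lambda$. Then Corollary \ref{cor3.4} reads $g^{ij}(\psi_\lambda(y))=g^{ij}(y)+\lambda\,\eta^{ij}(y)$ and Lemma \ref{lem3.6} reads $\Gamma_m^{ij}(\psi_\lambda(y))=\Gamma_m^{ij}(y)+\lambda\,\gamma_m^{ij}(y)$, so the pencil $g^{ij}+\lambda\eta^{ij}$ has the same components at $y$ as the flat metric $g$ has at $\psi_\lambda(y)$. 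Since $\psi_\lambda$ is affine with identity linear part, these are again the components of a flat metric, whose flat coordinates are obtained by precomposing those of $g$ with $\psi_\lambda$; hence $g^{ij}+\lambda\eta^{ij}$ is flat for every $\lambda$, and the same shift identity fed into \eqref{df3.14} shows that its connection is $\Gamma_m^{ij}+\lambda\gamma_m^{ij}$. Letting $\lambda\to\infty$ then exhibits $\eta^{ij}$ itself as a flat metric with Levi-Civita connection $\gamma_m^{ij}=\mathcal{L}_e\Gamma_m^{ij}$, which is the identification required by Theorem \ref{thm3.7}.

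I expect essentially all the difficulty to sit in the preparatory lemmas rather than in Theorem \ref{thm3.7} itself. The delicate hypothesis is the affine-linearity of the connection, $\mathcal{L}_e\mathcal{L}_e\Gamma_m^{ij}=0$ of Lemma \ref{lem3.6}: flatness of a single metric and nondegeneracy of $\eta$ are comparatively soft, but the defining feature of a \emph{pencil} is that the connection, and not merely the curvature, be linear in $\lambda$, and ``flat for all $\lambda$'' does not by itself force this. That linearity is precisely where the privileged position of the two roots $\alpha_k,\alpha_{k+1}$ enters, through $d_k=d_{k+1}=1$ in \eqref{df2.12} and the vanishing of the quadratic terms recorded in Lemma \ref{lem3.2}. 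Once Lemma \ref{lem3.6} is in hand, assembling the remaining ingredients via Dubrovin's Lemma D.1 is routine, and I would present the argument in exactly that order.
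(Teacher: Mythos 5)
Your proposal is correct and follows essentially the same route as the paper: the paper's proof is precisely the invocation of Dubrovin's Lemma D.1 (Lemma 3.3 of \cite{DSZZ2019}) with the hypotheses supplied by Lemma \ref{lem3.3}, Proposition \ref{prop3.4} and Lemma \ref{lem3.6}, exactly as you assemble them. Your additional unpacking of the translation argument behind the criterion is a sound expository supplement but not a different method.
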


 Without loss of generality,  in what follows we take
$\varsigma_1=\varsigma_2=1$ unless otherwise stated.

\section{Frobenius manifold structures on the orbit space $\mathcal{M}$}
\label{sec-4}

In this section we want to describe Frobenius manifold structures on the orbit space $\mathcal{M}$ of
${\widetilde W}^{(k,k+1)}(A_l)$ for $1\leq k < l$.

\subsection{The change of coordinates} In order to do this, we firstly make the change
of coordinates
\eqa
&& z^j=y^j, \quad j=1,\cdots, k-1,\, k+2,\cdots, l,  \nn \\
&& z^k=y^k, \quad z^{k+1}=y^{k+1}-y^k,   \label{df4.1}\\
&& z^{l+1}=y^{l+2}, \quad z^{l+2}=y^{l+1}+y^{l+2},\nn \eeqa
such that
\beq e=\dfrac{\p}{\p y^k}+\dfrac{\p}{\p y^{k+1}}=\dfrac{\p}{\p z^k}\,.\eeq

Let us denote
\beq g^{ij}(z)=\dsum_{a,b=1}^{l+2}\dfrac{\p z^i}{\p y^a}\dfrac{\p z^j}{\p
y^b}(dy^a,dy^b)^\thicksim,\quad \eta^{ij}(z)=\dsum_{a,b=1}^{l+2}\dfrac{\p z^i}{\p y^a}\dfrac{\p z^j}{\p
y^b}\eta^{ab}(y) \label{df4.3}\eeq
and $\Gamma_{m}^{ij}(z)$ and $\gamma_{m}^{ij}(z)$ are the contravariant components of the Levi-Civita connection for the
  metric $g^{ij}(z)$ and $\eta^{ij}(z)$.  Under the simple change of coordinates \eqref{df4.1}, it is easy to know that

\begin{enumerate}
  \item $g^{ij}(z)$ and $\Gamma_{m}^{ij}(z)$ are weighted homogeneous polynomials in
 ${z}^1$, $\cdots$, $z^{l+2}$, $e^{z^{l+1}}$, $e^{z^{l+2}-z^{l+1}}$ of the degrees $\deg g^{ij}(z)=\deg z^i+\deg z^j$ and
\beq \deg \Gamma_{m}^{ij}(z)=\deg z^i+\deg z^j-\deg z^m,
\label{df4.4}\eeq
where $\deg z^i=d_i$.
Moreover, $g^{ij}(z)$  and $\Gamma_{m}^{ij}(z)$ are at most linear in $z^k$ and
\eqa  && g^{k ,l+1}(z)=0 ,~\qquad g^{k+1 ,l+1}(z)=z^{k+1}+z^k, \nn \\
&&g^{k,l+2}(z)=z^k,\qquad g^{k+1,l+2}(z)=z^{k+1},  \label{df4.5}\\
&&g^{l+1 ,l+1}(z)=\frac{l-k+1}{l-k},\qquad g^{l+1,l+2}(z)=\frac{1}{l-k}, \nn \\
&&g^{l+2 ,l+2}(z)=\frac{l}{k(l-k)}.\nn
\eeqa

\item $\eta^{ij}(z)=\dfrac{\p g^{ij}(z)}{\p z^k}$ and $\gamma_{m}^{ij}(z)=\dfrac{\p \Gamma_{m}^{ij}(z)}{\p z^k}$. Especially,
\beq \eta^{i,l+1}(z)=\delta_i^{k+1},\quad  \eta^{i,l+2}(z)=\delta_i^{k},\quad i=1,
\cdots, l+2.\label{df4.6}\eeq
So we rename $k^*=l+2$ and $(k+1)^*=l+1$, and also have
$$\eta^{ii^*}(z)=\eta^{ii^*},\quad  i\ne k,k+1,\quad \eta^{kk^*}(z)=\eta^{(k+1)(k+1)^*}(z)=1.$$

\item $g^{ij}(z)$ and $\eta^{ij}(z)$ form a flat pencil of metrics.

\item $\eta_{ij}(z)$ are weighted homogeneous polynomials in
 ${z}^1,\cdots$,~$z^l$, $e^{z^{l+2}-z^{l+1}}$, $e^{z^{l+1}}$, where $(\eta^{ij}(z))$ is the inverse matrix of $(\eta^{ij}(z))$.
     \end{enumerate}


\subsection{Flat coordinates of the metric $\eta^{ij}(z)$}
In this subsection, we want to describe flat coordinates of the metric $\eta^{ij}(z)$.
\begin{lem} For $\nu,s,t=0,1$, we have
 \beq \frac{\p}{\p  z^{k+\nu}}\eta^{k+s,\ k+t}(z)=0.  \label{df4.7}\eeq\end{lem}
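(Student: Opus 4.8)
The plan is to reduce the statement to the vanishing of the $(y^k,y^{k+1})$-Hessian of the blocks $g^{ab}(y)$ with $a,b\in\{k,k+1\}$, which is already recorded as \eqref{ddf3.12} in the proof of Lemma \ref{alem3.3}. First I would recall the two inputs I am entitled to use: that $\eta^{ij}(z)=\partial g^{ij}(z)/\partial z^k$, and that under the linear change \eqref{df4.1} (whose inverse is $y^k=z^k$, $y^{k+1}=z^k+z^{k+1}$) the coordinate vector fields satisfy $\partial/\partial z^k=\partial/\partial y^k+\partial/\partial y^{k+1}$ and $\partial/\partial z^{k+1}=\partial/\partial y^{k+1}$. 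Since $\eta^{k+s,k+t}(z)=\partial g^{k+s,k+t}(z)/\partial z^k$, the desired identity \eqref{df4.7} is equivalent to the vanishing of the mixed second derivatives $\partial^2 g^{k+s,k+t}(z)/\partial z^k\,\partial z^{k+\nu}$ for $\nu=0,1$.

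Next I would exploit that both $z^{k}$ and $z^{k+1}$ are functions of $y^k,y^{k+1}$ alone, so that the contravariant transformation law collapses to a constant-coefficient combination
\[
g^{k+s,k+t}(z)=\sum_{a,b\in\{k,k+1\}}\frac{\partial z^{k+s}}{\partial y^a}\frac{\partial z^{k+t}}{\partial y^b}\,g^{ab}(y),
\]
the coefficients $\partial z^{k+s}/\partial y^a$ being constant. Applying the operator identities above, each second $z$-derivative of $g^{k+s,k+t}(z)$ becomes a fixed linear combination of $\partial^2_{y^ky^k}g^{ab}$, $\partial^2_{y^ky^{k+1}}g^{ab}$ and $\partial^2_{y^{k+1}y^{k+1}}g^{ab}$ with $a,b\in\{k,k+1\}$; for instance $\partial^2_{z^kz^{k+1}}$ expands as $(\partial_{y^k}+\partial_{y^{k+1}})\partial_{y^{k+1}}$. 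By \eqref{ddf3.12} all three of these second derivatives vanish, whence $\partial^2 g^{k+s,k+t}(z)/\partial z^k\,\partial z^{k+\nu}=0$ and therefore $\partial \eta^{k+s,k+t}(z)/\partial z^{k+\nu}=0$, as claimed. I note that the case $\nu=0$ is in any event immediate from Lemma \ref{alem3.3} alone, since $\mathcal{L}_e\mathcal{L}_e g^{ij}=0$ with $e=\partial/\partial z^k$ forces $g^{ij}(z)$ to be linear in $z^k$; the genuine content is the case $\nu=1$.

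The only real care needed, and the step I expect to be the main obstacle, is the bookkeeping distinction between the component $g^{k+s,k+t}$ regarded as a function of the $z$-coordinates and the components $g^{ab}(y)$ in the $y$-coordinates: these are related through the constant Jacobian of \eqref{df4.1}, not equal, so one must push the $z$-derivatives correctly through this linear relation before invoking \eqref{ddf3.12}. Once the operator identities $\partial/\partial z^k=\partial/\partial y^k+\partial/\partial y^{k+1}$ and $\partial/\partial z^{k+1}=\partial/\partial y^{k+1}$ are in place and the sum above is seen to range only over $a,b\in\{k,k+1\}$, the remaining computation is routine.
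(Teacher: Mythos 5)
Your proposal is correct and follows essentially the same route as the paper: the paper likewise writes $g^{k,k+1}(z)=g^{k,k+1}(y)-g^{kk}(y)$ via the constant Jacobian of \eqref{df4.1}, converts $\partial/\partial z^{k}$ and $\partial/\partial z^{k+1}$ into $\partial/\partial y^{k}+\partial/\partial y^{k+1}$ and $\partial/\partial y^{k+1}$, and invokes \eqref{ddf3.12}, treating the remaining $(s,t,\nu)$ cases as similar. Your extra remark that the $\nu=0$ case already follows from Lemma \ref{alem3.3} is a harmless simplification, not a different method.
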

 \begin{proof} Observe that
 \eqa \dfrac{\p}{\p  z^{k+1}}\eta^{k,\ k+1}(z)&=&\dfrac{\p}{\p  z^{k+1}} \dfrac{\p }{\p z^k}\dsum_{a,b=1}^{l+2}\dfrac{\p z^k}{\p y^a}\dfrac{\p z^{k+1}}{\p
y^b}g^{ab}(y)\nn\\
 &=&\dfrac{\p}{\p  y^{k+1}}\left(\dfrac{\p}{\p  y^{k}}+\dfrac{\p}{\p  y^{k+1}}\right)\left(g^{k,k+1}(y)-g^{kk}(y)\right)=0 \nn\eeqa
 which follows from \eqref{ddf3.12}. The other cases are similar.
 \end{proof}

 \begin{lem}\label{EZZlem4.6} For $\nu=0,1$, we have
 \beq \gamma_{j}^{i,\ l+1+\nu}(z)=0,\quad i,j=1,\cdots, l+2.\label{df4.8} \eeq\end{lem}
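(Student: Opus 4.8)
The plan is to read off the lemma from the contravariant analogue of the Christoffel identity \eqref{df3.14}, now applied to the metric $\eta^{ij}(z)$ and its Levi-Civita connection $\gamma_m^{ij}(z)$ (which is legitimate by Theorem \ref{thm3.7} transported through the linear change of coordinates \eqref{df4.1}). First I would write
\[
2\eta^{sm}(z)\gamma_m^{ij}(z)=\eta^{im}(z)\frac{\partial \eta^{js}(z)}{\partial z^m}+\eta^{sm}(z)\frac{\partial \eta^{ji}(z)}{\partial z^m}-\eta^{jm}(z)\frac{\partial \eta^{is}(z)}{\partial z^m}
\]
and specialize $j=l+1+\nu$. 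By \eqref{df4.6} the entries $\eta^{l+1+\nu,s}(z)$ and $\eta^{l+1+\nu,i}(z)$ are constants, so the first two terms die and there remains
\[
2\eta^{sm}(z)\gamma_m^{i,l+1+\nu}(z)=-\eta^{l+1+\nu,m}(z)\frac{\partial \eta^{is}(z)}{\partial z^m}.
\]
Again by \eqref{df4.6}, $\eta^{l+1,m}(z)=\delta_m^{k+1}$ and $\eta^{l+2,m}(z)=\delta_m^{k}$, so the right-hand side is $-\partial \eta^{is}(z)/\partial z^{k+1}$ when $\nu=0$ and $-\partial \eta^{is}(z)/\partial z^{k}$ when $\nu=1$.

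Since $(\eta^{sm}(z))$ is nondegenerate (its determinant is a nonzero constant, inherited from Proposition \ref{prop3.4} through the constant-Jacobian change \eqref{df4.1}), it then suffices to show that $\partial \eta^{is}(z)/\partial z^{k}$ and $\partial \eta^{is}(z)/\partial z^{k+1}$ vanish for all $i,s$. The first of these is immediate from item (2) after \eqref{df4.5}: $\eta^{is}(z)=\partial g^{is}(z)/\partial z^k$ and $g^{is}(z)$ is at most linear in $z^k$, so $\eta^{is}(z)$ is independent of $z^k$. This already settles the case $\nu=1$, giving $\gamma_m^{i,l+2}(z)=0$.

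The harder part, and the step I expect to be the main obstacle, is the vanishing of $\partial \eta^{is}(z)/\partial z^{k+1}$ for \emph{all} $i,s$, since it is not supplied entry-by-entry by a single earlier identity. My plan is to combine a weighted-degree count with \eqref{df4.7}. The entry $\eta^{is}(z)$ is weighted homogeneous of degree $d_i+d_s-d_k=d_i+d_s-1$ in the variables $z^1,\dots,z^l,e^{z^{l+1}},e^{z^{l+2}-z^{l+1}}$, all of which have strictly positive degree, so $\partial \eta^{is}(z)/\partial z^{k+1}$ is weighted homogeneous of degree $d_i+d_s-2$. By \eqref{df2.12} one has $d_j\le 1$ with equality only for $j\in\{k,k+1\}$; hence this degree is negative, forcing $\partial \eta^{is}(z)/\partial z^{k+1}\equiv 0$, unless both $i,s\in\{k,k+1\}$. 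For the surviving indices $i,s\in\{k,k+1\}$ the required vanishing is exactly \eqref{df4.7} with the derivative $\partial/\partial z^{k+1}$. Thus $\partial \eta^{is}(z)/\partial z^{k+1}=0$ for all $i,s$, the right-hand side above is identically zero for every $s$, and nondegeneracy of $(\eta^{sm}(z))$ yields $\gamma_m^{i,l+1}(z)=0$, completing the proof.
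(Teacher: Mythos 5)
Your proof is correct and follows essentially the same route as the paper: both rest on the contravariant Christoffel identity \eqref{df4.11} specialized to $j=l+1+\nu$, together with the constancy of $\eta^{i,l+1+\nu}(z)$ from \eqref{df4.6}, the vanishing \eqref{df4.7}, and a weighted-degree count. The only (harmless) difference is organizational — you apply the degree count to $\partial\eta^{is}/\partial z^{k+1}$ and invoke nondegeneracy of $(\eta^{sm}(z))$ at the end, whereas the paper first kills all but the entries $\gamma_{l+1+\mu}^{k+\sigma,\,l+1+\nu}$ by a degree count on $\gamma$ itself and then extracts those by choosing $s=k$ and $s=k+1$ in \eqref{df4.12}.
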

 \begin{proof} With the use of \eqref{df4.4} and
 $\gamma_{j}^{i,\ l+1+\nu}(z)=\dfrac{\p}{\p z^k}\Gamma_{j}^{i,\ l+1+\nu}(z)$, we obtain
 \beq \deg \gamma_{j}^{i,\ l+1+\nu}(z)=d_i-d_j-d_k<0, \quad i.e.,\quad \gamma_{j}^{i,\ l+1+\nu}(z)=0\label{df4.9} \eeq
 except the cases $\gamma_{l+1+\mu}^{k+\sigma,\ l+1+\nu}(z)$ for $\nu,\mu, \sigma=0,1.$
So it suffices to show that
\beq
 \gamma_{l+1+\mu}^{k+\sigma,\ l+1+\nu}(z)=0, \quad \nu,\mu, \sigma=0,1. \label{df4.10}\eeq

Since $\gamma_{m}^{ij}(z)$ are the contravariant components of the Levi-Civita connection for the
metric $g^{ij}(z)$, then
  \beq
2\eta^{sm}(z)\gamma_m^{ij}(z)=\eta^{im}(z)\dfrac{\p \eta^{js}(z)}{\p
z^m}+\eta^{sm}(z)\dfrac{\p \eta^{ji}(z)}{\p z^m} -\eta^{jm}(z)\dfrac{\p
\eta^{is}(z)}{\p z^m}.\label{df4.11}\eeq
By choosing $i=k+\sigma$ and $j=l+1+\nu$ in \eqref{df4.11}, it follows from \eqref{df4.9} that
\eqa &&2\eta^{s,\ l+1}(z)\gamma_{l+1}^{k+\sigma,\ l+1+\nu}(z)+2\eta^{s,\ l+2}(z)\gamma_{l+2}^{k+\sigma,\ l+1+\nu}(z)
=\eta^{k+\sigma,\ m}(z)\dfrac{\p \eta^{l+1+\nu,\ s}(z)}{\p z^m}
\nn\\&&+~~\eta^{sm}(z)\dfrac{\p \eta^{l+1+\nu,\ k+\sigma}(z)}{\p z^m} -\eta^{l+1+\nu,\ m}(z)\dfrac{\p
\eta^{k+\sigma,\ s}(z)}{\p z^m}.\label{df4.12}\eeqa
Taking $s=k$ and $s=k+1$ respectively in \eqref{df4.12} and with the help of \eqref{df4.6} and \eqref{df4.7},
we get the desired identities \eqref{df4.10}. \end{proof}

\begin{thm}\label{thm-4.6}There exist flat coordinates of the metric $(\eta^{ij}(z))$ in the form
\beq \begin{array}{l} t^\alpha=z^\alpha+h^\alpha({z}^1, \cdots,\widehat{z^\alpha},\cdots, z^l, e^{z^{l+1}}, e^{z^{l+2}-z^{l+1}}),
 \\
t^{l+1}=z^{l+1},\quad t^{l+2}=z^{l+2}, \quad \alpha=1,\cdots, l,
\end{array} \label{EZZ4.22}\eeq
where $h^\alpha$ are weighted homogeneous polynomials in ${z}^1, \cdots,z^{\alpha-1},z^{\alpha+1},\cdots, z^l$, $e^{z^{l+1}}$,
$e^{z^{l+2}-z^{l+1}}$ of degree ${d}_\alpha$ defined in
\eqref{df2.11}.
\end{thm}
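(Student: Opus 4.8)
The plan is to construct the flat coordinates $t^\alpha$ by solving, order by order in weighted degree, the condition that the one-forms $dt^\alpha$ be covariantly constant for the connection $\gamma_m^{ij}(z)$ of the metric $(\eta^{ij}(z))$. Recall from Theorem \ref{thm3.7} and the discussion in \S4.1 that $\eta^{ij}(z)=\partial_{z^k}g^{ij}(z)$ is a flat metric whose contravariant Christoffel symbols $\gamma_m^{ij}(z)=\partial_{z^k}\Gamma_m^{ij}(z)$ are weighted homogeneous polynomials of degree $d_i+d_j-d_m$. A function $t=t(z)$ is flat precisely when its gradient $\xi^i:=\eta^{im}(z)\,\partial t/\partial z^m$ satisfies the linear system
\beq
\frac{\p \xi^i}{\p z^m}=\gamma_m^{ij}(z)\,\xi_j,\qquad \xi_j:=\eta_{jr}(z)\,\xi^r,\nn
\eeq
equivalently that $\p^2 t/\p z^a\p z^b=\sum_m\widetilde\Gamma^m_{ab}(z)\,\p t/\p z^m$ for the covariant Christoffel symbols; I would work with whichever of \eqref{df4.11} or its covariant form is more convenient. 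The goal is to show this system admits $l$ independent solutions of the stated polynomial-exponential shape, together with the two obvious linear solutions $t^{l+1}=z^{l+1}$ and $t^{l+2}=z^{l+2}$.

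First I would verify directly that $z^{l+1}$ and $z^{l+2}$ are flat: by \eqref{df4.6} the gradients of $z^{l+1}$, $z^{l+2}$ are the constant covectors dual to $\eta^{i,l+2}=\delta_i^k$ and $\eta^{i,l+1}=\delta_i^{k+1}$, and Lemma \ref{EZZlem4.6} gives $\gamma_j^{i,l+1+\nu}(z)=0$, so these gradients are covariantly constant. This is the crucial input: it says the flat structure in the two exponential directions is already trivialized, so I may seek the remaining coordinates in the form $t^\alpha=z^\alpha+h^\alpha$ with $\p t^\alpha/\p z^{l+1}$, $\p t^\alpha/\p z^{l+2}$ determined by $h^\alpha$ alone and with no correction to the $z^{l+1},z^{l+2}$ rows. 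Next I would set up the recursion: writing $t^\alpha=z^\alpha+h^\alpha$ and plugging into the flatness equations, the leading (linear) term is flat because $\eta^{ij}$ restricted to the $z^{l+1},z^{l+2}$ block is constant, and each homogeneous piece of $h^\alpha$ of degree $d_\alpha$ is then determined by the lower-degree pieces through an inhomogeneous linear equation whose right-hand side involves the polynomial symbols $\gamma_m^{ij}(z)$. Since $\gamma_m^{ij}(z)$ is a weighted homogeneous polynomial in $z^1,\dots,z^l,e^{z^{l+1}},e^{z^{l+2}-z^{l+1}}$, each step preserves this function class, and the degree bookkeeping $\deg\gamma_m^{ij}=d_i+d_j-d_m$ forces $h^\alpha$ to have exactly degree $d_\alpha$ and to be independent of $z^\alpha$ itself (the $z^\alpha$-linear part is already accounted for).

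The main obstacle is the consistency and solvability of the recursion, i.e.\ checking that the inhomogeneous linear system for each homogeneous component of $h^\alpha$ is actually integrable (its mixed partials agree) and that a polynomial solution exists at every degree. Integrability is guaranteed in principle by the flatness of $(\eta^{ij}(z))$ established in Theorem \ref{thm3.7} (a flat metric always admits a complete set of flat coordinates locally), so the real content is the \emph{polynomiality}: one must rule out that solving the equations introduces genuine logarithmic or negative-power terms, and must confirm that the solution can be normalized to start with $z^\alpha$. Here I would lean on the grading: because every $d_\alpha>0$ and the symbols $\gamma_m^{ij}$ are honest weighted-homogeneous polynomials, integrating the gradient produces only terms of positive degree built from $z^1,\dots,z^l,e^{z^{l+1}},e^{z^{l+2}-z^{l+1}}$, and the degree-$d_\alpha$ truncation is finite-dimensional, so at each stage one solves a finite linear system over this polynomial ring. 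The independence of the resulting $t^1,\dots,t^{l+2}$ follows since their Jacobian is unipotent (the $h^\alpha$ are lower-order corrections), and the fact that $\p t^\alpha/\p z^k$ supplies the unity direction $e=\p/\p z^k$ will let me fix the normalization. I expect the detailed verification that no term in $h^\alpha$ depends on $z^\alpha$ or acquires degree different from $d_\alpha$—using \eqref{df4.7}, Lemma \ref{EZZlem4.6}, and the triangular/duality structure of $\eta^{ij}$ from Lemma \ref{lem3.3}—to be the step requiring the most care.
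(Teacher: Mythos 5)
Your proposal is correct and follows essentially the same route as the paper: the holonomic linear system for the gradients of the flat coordinates, Lemma \ref{EZZlem4.6} to exhibit $z^{l+1},z^{l+2}$ as two of the solutions, normalization of the remaining $l$ solutions at the origin, and weighted homogeneity plus positivity of all the degrees (including $\deg e^{z^{l+1}}=\frac{1}{l-k}$, $\deg e^{z^{l+2}-z^{l+1}}=\frac{1}{k}$) to force the $h^\alpha$ to be polynomials of degree $d_\alpha$. The only place the paper is more explicit than you are is in how homogeneity of the normalized solutions is actually established: it observes that the whole system is invariant under $z^\alpha\mapsto c_0^{d_\alpha}z^\alpha$, $z^{l+1}\mapsto z^{l+1}+\frac{1}{l-k}\log c_0$, $z^{l+2}\mapsto z^{l+2}+\frac{l}{k(l-k)}\log c_0$, which is cleaner than the degree-by-degree recursion you sketch (and note that, since $h^\alpha$ is homogeneous of the single degree $d_\alpha$, it has no ``lower-degree pieces'' to recurse on).
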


\begin{proof}Local existence of the coordinates $t^1,\cdots,t^{l+2}$ follows from
flatness of the metric $(\eta^{ij}(z))$. The flat coordinates $t=t(z)$ are to be found
from the following system
\beq
 \frac{\pal^2 t}{\pal z^i\pal z^j}+\eta_{is}(z)\gamma_j^{sm}(z)
   \frac{\pal t}{\pal z^m}=0, \quad i,j=1,\cdots, l+2. \label{ZZ3.19}
\eeq
The system \eqref{ZZ3.19} can be written as linear differential equations
\beq
 \dfrac{\pal w_i}{\pal z^j}-\gamma_{ij}^{m}(z)w_m=0,\quad
w_i=\dfrac{\p t}{\p z^i}.\nn\eeq
This is an overdetermined holonomic system. So the space of solutions has dimension $l+2$.
Observe that  those coefficients in \eqref{ZZ3.19} are
weighted homogeneous polynomials in
 ${z}^1$, $\cdots$, $z^l$, $e^{z^{l+1}}$, $e^{z^{l+2}-z^{l+1}}$.
From \eqref{df4.8}, it follows that
\beq t^{l+1}=z^{l+1},\quad t^{l+2}=z^{l+2}\nn\eeq
are two solutions of \eqref{ZZ3.19}. We choose remaining solutions
\beq t^\alpha=z^\alpha+h^\alpha({z}^1, \cdots,\widehat{z^\alpha},\cdots, z^l, e^{z^{l+1}}, e^{z^{l+2}-z^{l+1}}),
\quad \alpha=1,\cdots,l\nn\eeq
in such a way that
\beq \dfrac{\p t^\alpha}{\p z^\beta}(0,\cdots,0,0,0)=\delta_\beta^\alpha,\quad \alpha,
\beta=1,\cdots, l.\label{ZZZ1}\eeq
These solutions $t^\alpha$ are power series in ${z}^1, \cdots, z^l$, $e^{z^{l+1}}$,
$e^{z^{l+2}-z^{l+1}}$. The system \eqref{ZZ3.19} is invariant with respect
to the transformation
\beq z^\alpha \mapsto  c_0^{d_\alpha} z^\alpha, z^{l+1}  \mapsto z^{l+1}+\frac{1}{l-k} \log(c_0),
z^{l+2}  \mapsto z^{l+2}+ \frac{l}{k(l-k)}\log(c_0)\nn \eeq
for any positive constant $c_0$. This yields that  $t^\alpha$ are weighted
homogeneous in ${z}^1, \cdots, z^l$, $e^{z^{l+1}}$,
$e^{z^{l+2}-z^{l+1}}$ of the same degree ${d}_\alpha>0$. Thus $h^\alpha$ are
 weighted homogeneous polynomials in  ${z}^1, \cdots,z^{\alpha-1},z^{\alpha+1},\cdots, z^l$, $e^{z^{l+1}}$,
$e^{z^{l+2}-z^{l+1}}$ of degree  ${d}_\alpha$. \end{proof}

\begin{cor}\label{cor3.13}In the flat coordinates $t^1,\cdots, t^{l+2}$, the entries
of the metric $(\eta^{ij}(t))$ have the form
\beq
\eta^{ij}:=\eta^{ij}(t)=\left\{\begin{array}{cll}
\eta^{ii^*}(y),\quad & j=i^*,\\
0, \quad& j\ne i^*,\\
\end{array}\right.\nn\eeq
for $i,j=1,\cdots,l+2$. Especially,
\beq \eta^{i,l+1}(t)=\delta_i^{k+1},\quad  \eta^{i,l+2}(t)=\delta_i^{k},\quad i=1,
\cdots, l+2.\nn\eeq
The entries of the matrix $(g^{ij}(t))$ and the
Christoffel symbols $\Gamma^{ij}_m(t)$ are weighted homogeneous
polynomials of ${t}^1, \cdots, t^l$, $e^{t^{l+1}}$,
$e^{t^{l+2}-t^{l+1}}$  of
degrees $d_i+d_j$ and $d_i+d_j-d_m$ respectively.
In particular,
\eqa
&& g^{\al,\,l+2}(t)=d_\al t^\al,~~ 1\leq \al\leq l,\quad g^{l+1 ,l+1}(t)=\frac{l-k+1}{l-k},\nn \\
&& g^{l+1,l+2}(t)=\frac{1}{l-k},\quad g^{l+2 ,l+2}(t)=\frac{l}{k(l-k)},\label{zh8}
\eeqa
and
\beq \Gamma_{j}^{l+2,\,i}(t)= d_j\,\delta_{j}^i,\quad  1\leq i,j \leq l+2 \label{EZZ4.30} \eeq
and
\beq  g^{k+1,l+1}(t)=t^{k}+t^{k+1}+g_0(t)\label{ZZZ99}\eeq
for certain weighted homogenous polynomial $g_0(t)$ in  $t^1,\cdots$,${t}^{k-1}$,
${t}^{k+2}$, $\cdots$,
 $t^l$, $e^{t^{l+1}}$, $e^{t^{l+2}-t^{l+1}}$ of degree 1.
\end{cor}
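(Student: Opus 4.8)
The plan is to transfer the already-established structural facts about $g^{ij}$, $\eta^{ij}$ and $\Gamma^{ij}_m$ from the $z$-coordinates to the flat coordinates $t$ of Theorem \ref{thm-4.6}, exploiting that the change $z\mapsto t$ is upper-triangular and preserves weighted degrees. First I would record the transformation law: since $t^{l+1}=z^{l+1}$, $t^{l+2}=z^{l+2}$ and $t^\alpha=z^\alpha+h^\alpha$ with $h^\alpha$ weighted homogeneous of degree $d_\alpha$ and independent of $z^\alpha$, the Jacobian $\tfrac{\p t^i}{\p z^j}$ is unipotent and its entries are again weighted homogeneous polynomials in $z^1,\dots,z^l,e^{z^{l+1}},e^{z^{l+2}-z^{l+1}}$. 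Applying the tensor transformation $\eta^{ij}(t)=\sum_{a,b}\tfrac{\p t^i}{\p z^a}\tfrac{\p t^j}{\p z^b}\eta^{ab}(z)$ and using item (2) of the $z$-coordinate list together with Lemma \ref{lem3.3}, the constancy of the antidiagonal entries $\eta^{ii^*}$ is preserved, and the defining property of flat coordinates forces $\eta^{ij}(t)$ to be constant with the stated block-antidiagonal shape. The identities $\eta^{i,l+1}(t)=\delta_i^{k+1}$ and $\eta^{i,l+2}(t)=\delta_i^k$ follow from \eqref{df4.6} since $t^{l+1}=z^{l+1}$, $t^{l+2}=z^{l+2}$ and the triangular form kills any mixing into the last two coordinates.

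Next I would handle the homogeneity of $g^{ij}(t)$ and $\Gamma^{ij}_m(t)$. Because the Jacobian entries are weighted homogeneous and the change of coordinates respects the grading (each $t^\alpha$ has degree $d_\alpha$, matching $z^\alpha$, while $t^{l+1},t^{l+2}$ have degree $0$), the transformation formulae \eqref{df4.3} and the connection transformation rule propagate the degree count from item (1): $\deg g^{ij}(t)=d_i+d_j$ and $\deg\Gamma^{ij}_m(t)=d_i+d_j-d_m$, with the entries being weighted homogeneous polynomials in $t^1,\dots,t^l,e^{t^{l+1}},e^{t^{l+2}-t^{l+1}}$ (note $e^{z^{l+1}}=e^{t^{l+1}}$ and $e^{z^{l+2}-z^{l+1}}=e^{t^{l+2}-t^{l+1}}$). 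For the explicit values \eqref{zh8}, I would start from \eqref{df4.5}: the entries $g^{l+1,l+1}$, $g^{l+1,l+2}$, $g^{l+2,l+2}$ involve only the last two coordinates, which are unchanged, so they transfer verbatim. For $g^{\alpha,l+2}(t)$ I would use that $\tfrac{\p t^\alpha}{\p z^a}$ contracted against $\eta^{a,l+2}$-type data, combined with the Euler-vector structure (the degree-$d_\alpha$ homogeneity of $t^\alpha$ and $g^{\alpha,l+2}(z)=z^\alpha$-type behavior from \eqref{df4.5}), pins down $g^{\alpha,l+2}(t)=d_\alpha t^\alpha$; this is essentially the statement that $t^\alpha$ is an eigenfunction of the Euler field with eigenvalue $d_\alpha$.

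For \eqref{EZZ4.30} I would argue that $\Gamma^{l+2,i}_j(t)$ is governed by the fact that $z^{l+2}=t^{l+2}$ is a linear flat coordinate and $g^{\cdot,l+2}$ encodes the Euler scaling: using \eqref{df3.15} in $t$-coordinates, $\tfrac{\p g^{l+2,i}(t)}{\p t^j}=\Gamma^{l+2,i}_j(t)+\Gamma^{i,l+2}_j(t)$, and together with $g^{l+2,i}(t)=d_i t^i$ (for $1\le i\le l$, extended by the explicit last-two-coordinate values) one reads off $\Gamma^{l+2,i}_j(t)=d_j\delta^i_j$ after checking the complementary symmetry via the Levi--Civita formula \eqref{df3.14}. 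Finally, for \eqref{ZZZ99} I would transform $g^{k+1,l+1}(z)=z^{k+1}+z^k$ from \eqref{df4.5}: substituting $z^k=t^k-h^k$ and $z^{k+1}=t^{k+1}-h^{k+1}$ yields $g^{k+1,l+1}(t)=t^k+t^{k+1}+g_0(t)$ with $g_0=-(h^k+h^{k+1})$ plus any corrections from Jacobian mixing, all of weighted degree $1$ and free of $t^k,t^{k+1}$ by the structure of $h^\alpha$ in Theorem \ref{thm-4.6}. \emph{The main obstacle} I anticipate is the explicit normalization in \eqref{zh8} and \eqref{EZZ4.30}: the homogeneity and triangularity arguments are routine, but verifying the precise numerical coefficients $d_\alpha$ in $g^{\alpha,l+2}(t)=d_\alpha t^\alpha$ and $\Gamma^{l+2,i}_j(t)=d_j\delta^i_j$ requires carefully matching the Euler-field action against the flat-coordinate normalization \eqref{ZZZ1}, ensuring no lower-degree polynomial corrections survive — this is where the interplay between the grading, the constancy of $\eta$, and the defining ODE \eqref{ZZ3.19} for the $t^\alpha$ must be used in concert rather than separately.
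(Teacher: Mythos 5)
Your overall strategy is sound and, for most of the corollary, coincides with the paper's: the block--antidiagonal constancy of $\eta^{ij}(t)$ comes from the unipotent, degree-preserving change $z\mapsto t$ (the paper phrases this as $\p/\p t^k=\p/\p z^k$, $\eta^{ij}(t)=\p g^{ij}(t)/\p t^k$, plus the fact that the linear part of $t^\alpha$ is $z^\alpha$), the homogeneity statements propagate through the graded Jacobian, and \eqref{ZZZ99} is obtained exactly as you say, by substituting \eqref{EZZ4.22} into \eqref{df4.5}. Where you genuinely diverge is \eqref{EZZ4.30}: the paper proves it by a direct computation in the $x$-coordinates, using \eqref{df3.13} together with the facts that $t^{l+2}=2\pi i(x_{l+1}+x_{l+2})$ is \emph{linear} in $x$ and that the flat metric $(dx_a,dx_b)^{\sptilde}$ is block diagonal, so the whole sum collapses to $d_i\,dt^i$. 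You instead argue intrinsically in the $t$-coordinates from $g^{l+2,\alpha}(t)=d_\alpha t^\alpha$ (i.e.\ $g^{l+2,m}\p_m=\mathcal{L}_E$) and the symmetry \eqref{df3.15}. That route does work, but the step you compress into ``checking the complementary symmetry via \eqref{df3.14}'' is precisely the crux and should be made explicit: from $d_i\delta^i_j=\Gamma^{l+2,i}_j(t)+\Gamma^{i,l+2}_j(t)$ you still need $\Gamma^{i,l+2}_j(t)=0$, which follows by putting $j=l+2$ in \eqref{df3.14} and using $g^{l+2,m}\p_m g^{is}=\mathcal{L}_E g^{is}=(d_i+d_s)g^{is}$ so that the right-hand side cancels identically (or, more cheaply, from the paper's observation that $t^{l+2}$ is linear in the flat coordinates $x$ of $g$, so its second derivatives in \eqref{df3.13} vanish). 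With that one line supplied, your argument is complete; it is slightly longer than the paper's but has the merit of never leaving the $t$-coordinates. A similar remark applies to your treatment of $g_0$ in \eqref{ZZZ99}: the absence of $t^k$ in $g_0$ is most cleanly seen from $\eta^{k+1,l+1}(t)=\p_{t^k}g^{k+1,l+1}(t)=1$, rather than from the structure of $h^\alpha$ alone, since $h^{k+1}$ may a priori contain a $z^k$-term.
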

\begin{proof} In the flat coordinates $t^1,\cdots, t^{l+2}$,  using \eqref{ZZZ1} and Theorem \ref{thm-4.6} we have
$$\dfrac{\p}{\p t^k}=\dfrac{\p}{\p z^k},\quad \eta^{ij}(t)=\dfrac{\p g^{ij}(t)}{\p t^k}.$$ Thus the first statement
of this corollary follows from the fact that the linear part of $t^\al$ is $z^\al$.

By definition, we easily obtain \eqref{zh8}. The identity \eqref{ZZZ99} follows from \eqref{df4.5} and \eqref{EZZ4.22}. It remains
to prove \eqref{EZZ4.30}. Notice that $t^{l+2}=2\pi i(x_{l+1}+x_{l+2})$ and
\eqa \dsum_{j=1}^{l+2}\Gamma_{j}^{l+2,i}(t)dt^j
&& =\dsum_{p,q,r=1}^{l+2}\frac{\p t^{l+2}}{\p x_p}
   \frac{\p^2 t^i}{\p x_q \p x_r}(dx_p,dx_q)^\thicksim dx_r \nn \\
&&=\dsum_{r=1}^{l+2}\dsum_{p,q=l+1}^{l+2}\frac{\p t^{l+2}}{\p x_p}
   \frac{\p^2 t^i}{\p x_q \p x_r}(dx_p,dx_q)^\thicksim dx_r\nn\\
&& =-4\pi^2\dsum_{p=l+1}^{l+2}
    [d_{i,k}(dx_p,dx_{l+1})^\thicksim+d_{i,k+1}(dx_p,dx_{l+2})^\thicksim]dt^i\nn\\
&&=(\frac{d_{i,k}}{k}+  \frac{d_{i,k+1}}{l-k})dt^i=d_i dt^i.\nn
\eeqa
This completes the proof of the corollary.
\end{proof}

\subsection{Frobenius manifold structures on the orbit space $\mathcal{M}$}
Now we are ready to describe the Frobenius manifold structures on the orbit space of
the extended affine Weyl group ${\widetilde W}^{(k,k+1)}(A_l)$.

 \begin{thm}\label{thm4.7}
 For any fixed integer $1\le k<l$, there exists a unique Frobenius manifold structure
 of charge $d=1$ on the orbit
space $\mathcal{M}$ of $\widetilde{W}^{(k,k+1)}(A_l)$ such that
the potential $F(t)=\widehat{F}(t)+\frac{1}{2}(t^{k+1})^2\log(t^{k+1})$, where $\widehat{F}(t)$ is
a weighted homogeneous polynomial in $t^1,t^2$, $\cdots, t^{l+2}$, $e^{t^{l+1}}$, $e^{t^{l+2}-t^{l+1}}$, satisfying
\begin{enumerate}
\item the unity vector field $e$ coincides with $\dfrac{\p}{\p t^k}$;
 \item the Euler vector field has the form
\begin{equation}E=\dsum_{\alpha=1}^{l}d_\alpha t^\alpha \dfrac{\p}{\p t^\alpha}
+\dfrac{1}{l-k}\dfrac{\p}{\p t^{l+1}}+\dfrac{l}{k(l-k)}\dfrac{\p}{\p t^{l+2}}~,\label{AM00}
\end{equation}
where $d_1,\dots,d_{l}$ are defined in \eqref{df2.11};
\item the invariant flat metric and the intersection form of the Frobenius manifold structure coincide respectively with the
metrics $\eta^{ij}$ and $g^{ij}(t)$ on $\mathcal{M}$.
\end{enumerate}
\end{thm}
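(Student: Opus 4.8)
The plan is to follow the now-standard Dubrovin recipe for reconstructing a Frobenius manifold from a flat pencil of metrics, exactly as in \cite{Du1996,DZ1998,DSZZ2019}, and to verify that the resulting structure satisfies the three stated conditions. By Theorem \ref{thm3.7} we already possess a flat pencil $g^{ij}(t)+\lambda\,\eta^{ij}(t)$ with $\eta$ a flat (in fact constant-coefficient, by Corollary \ref{cor3.13}) metric. The first step is to produce the potential of the pencil, i.e. a function $F(t)$ whose third derivatives give the structure constants. In the flat coordinates $t^1,\dots,t^{l+2}$ the constant metric $\eta^{ij}$ raises and lowers indices, and one seeks $F$ such that the contravariant Christoffel symbols and the intersection form are recovered via the quasihomogeneity condition. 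Concretely, I would exploit the relation $\Gamma_j^{l+2,i}(t)=d_j\delta_j^i$ from \eqref{EZZ4.30}: contracting the defining relation \eqref{df3.15} and using that $t^{l+2}$ is dual (in the $\eta$-pairing) to $t^k$, one can integrate the connection coefficients to obtain a function $F$ with $\partial_i\partial_j\partial_{l+2}F$ encoding the $g^{ij}$. This is the heart of Dubrovin's ``potentiality of a flat pencil'' lemma; I would invoke it to guarantee the existence of $F$ with
\beq
g^{ij}(t)=\Bigl(d_i+d_j-1\Bigr)\,\eta^{ii'}\eta^{jj'}\,\partial_{i'}\partial_{j'}F(t)+\text{const},\nn
\eeq
where the constant is fixed by \eqref{zh8}.

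The second step is to pin down the analytic form of $F$. From Lemma \ref{lem3.1} and Corollary \ref{cor3.13} every $g^{ij}(t)$ is a weighted homogeneous polynomial in $t^1,\dots,t^l$, $e^{t^{l+1}}$, $e^{t^{l+2}-t^{l+1}}$ and at most linear in $t^k$; integrating twice in the directions dual to $g$ therefore yields a function that is polynomial in the same generators \emph{except} for the single anomalous direction. The logarithmic term arises precisely because of \eqref{ZZZ99}: the entry $g^{k+1,l+1}(t)=t^k+t^{k+1}+g_0(t)$ contains the bare monomial $t^{k+1}$, and since the index $k+1$ is $\eta$-dual to $l+1=(k+1)^*$, integrating this contribution in $t^{k+1}$ twice produces the term $\tfrac12(t^{k+1})^2\log(t^{k+1})$, while all remaining contributions integrate to the polynomial $\widehat F(t)$. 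I would verify the charge $d=1$ and the degrees directly: by \eqref{df2.14} the exponents satisfy $d_i+d_{i^*}=1$, so $\deg g^{ij}=d_i+d_j$ forces $\deg F=3-d=2$ with $d=1$, matching \eqref{AM00} where the Euler field has the stated coefficients $1/(l-k)$ and $l/(k(l-k))$ along $t^{l+1},t^{l+2}$ (these being exactly $g^{l+1,l+1}$-type data transported through the coordinate change \eqref{df4.1}).

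The third step is to check the remaining Frobenius axioms. The unity $e=\partial/\partial t^k$ follows because, after \eqref{df4.1}, $e=\partial/\partial z^k$ and by construction $\partial_k$ is the $\eta$-dual of the $t^{l+2}$ direction that generates the pencil; flatness of $e$ and the normalization $\eta(e,\cdot)=dt^{l+2}$ follow from \eqref{df4.6} and Corollary \ref{cor3.13}. Associativity (the WDVV equations) is not checked by hand but is a \emph{consequence} of the flat-pencil structure via Dubrovin's theorem: a flat pencil of metrics that is quasihomogeneous and regular, with the potentiality property, is equivalent to a Frobenius structure. Uniqueness follows from the rigidity of the construction once $e$, $E$, $\eta$ and $g$ are fixed, since these data determine $F$ up to the quadratic and lower-degree ambiguities that are themselves constrained by homogeneity.

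I expect the main obstacle to be the careful bookkeeping that isolates the logarithmic term and confirms it is the \emph{only} non-polynomial contribution. One must show that no other diagonal or near-diagonal entry of $g^{ij}$ contributes a bare square of a single flat coordinate upon integration, and that the cross terms coming from $g_0(t)$ in \eqref{ZZZ99} integrate to genuine polynomials in the exponential generators. This requires tracking the weighted degrees through the coordinate change \eqref{df4.1} and invoking Lemma \ref{lem3.2} and Lemma \ref{alem3.3} to control exactly which $y^iy^j$ monomials survive; the reduction $\mathcal{L}_e^2 g^{ij}=0$ guarantees linearity in $t^k$, which is what ultimately confines the logarithm to the $(t^{k+1})^2\log t^{k+1}$ term. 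Verifying consistency of the overdetermined system for $F$ at the intersection of the two divisors $\ty_{l+1}=0$ and $\ty_{l+2}=0$ (excised from $\mathcal{M}$) is the delicate point where one must confirm regularity of the pencil and hence applicability of Dubrovin's reconstruction.
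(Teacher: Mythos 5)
Your proposal follows essentially the same route as the paper: Dubrovin's flat-pencil machinery from Theorem \ref{thm3.7}, integration of the contravariant Christoffel symbols to functions $f^\beta$ and then to a potential $F$, homogeneity and polynomiality bookkeeping in the flat coordinates, and WDVV obtained as a consequence of the pencil rather than checked by hand. Two details in your sketch are off, though neither derails the strategy. First, the relation between the intersection form and the Hessian of $F$ is $g^{ij}=(d_i+d_j)\,\eta^{ii'}\eta^{jj'}\partial_{i'}\partial_{j'}F$ away from the resonant indices (this is \eqref{deq2.31} after setting $F^\gamma=f^\gamma/d_\gamma$), not $d_i+d_j-1$; the paper packages the resonant cases uniformly as $g^{\alpha\beta}=\mathcal{L}_EF^{\alpha\beta}$. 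Second, the logarithm does not arise from ``integrating the bare $t^{k+1}$ in $g^{k+1,l+1}$ twice in $t^{k+1}$'': since $\eta^{k+1,l+1}=1$, the entry \eqref{ZZZ99} encodes the \emph{mixed} derivative, $g^{k+1,l+1}=\mathcal{L}_E\,\partial_{k+1}\partial_{l+1}F$, and its bare $t^{k+1}$-term serves only to fix the coefficient $a_0=\tfrac12$ of the polynomial term $t^{l+1}(t^{k+1})^2$ in $G$. The logarithm itself is forced by the degree-$2$ resonance in the quasihomogeneity equation $\mathcal{L}_EG=2G+\varphi(t^{k+1})$ together with the constant entry $g^{l+1,l+1}=\tfrac{l-k+1}{l-k}=\mathcal{L}_E\,\partial_{k+1}^2F$, which pins down $\varphi(t^{k+1})=\tfrac{l-k+1}{2(l-k)}(t^{k+1})^2$; solving the resulting ODE $t^{k+1}\psi'=2\psi+C(t^{k+1})^2$ produces $(t^{k+1})^2\log t^{k+1}$ with coefficient $\tfrac{l-k+1}{2(l-k)}-\tfrac{a_0}{l-k}=\tfrac12$. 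If you rework your second step along these lines (and replace the appeal to ``rigidity'' for uniqueness by the explicit killing of the integration constants via admissible shifts, as the paper does), your argument matches the paper's proof.
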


\begin{proof}The idea of the proof is similar to that of \cite{DZ1998}, i.e., using the theory of
flat pencils of metrics (\cite{Du1996}).

 Let $\Gamma^{\alpha\beta}_{\gamma}(t)$\
be the coefficients
of the Levi-Civita connection for the metric $\ (\ ,\ )^{\sptilde}$\
in the coordinates $\ t^1,\dots,t^{l+2}$.\  According to Proposition D.1 of \cite{Du1996}
one can represent these functions as
\begin{equation}
\Gamma^{\alpha\beta}_{\gamma}(t)=\eta^{\alpha\ve}\pal_{\ve}\pal_{\gamma}
f^{\beta}(t)\label{deq2.28}
\end{equation}
for some functions $\ f^{\beta}(t)$.\ From the weighted homogeneity of
$\ \Gamma^{\alpha\beta}_{\gamma}(t)$\ and Corollary \ref{cor3.13}, one has
$$
\partial_\alpha \partial_\gamma \left( {\mathcal{L}}_E f^\beta - {(1+
d_\beta)} f^\beta \right) =0
$$
for any $\alpha, ~\beta$. So
\begin{equation}
{\mathcal{L}}_E f^\beta (t) = {(d_\beta +1)}f^\beta(t)
+A^\beta_\sigma t^\sigma +B^\beta\label{deq2.29}
\end{equation}
for some constants $A^\beta_\sigma$, $B^\beta$.
Doing a transformation
$$f^\beta(t)\mapsto \widetilde f^\beta (t)=f^\beta(t) +R^\beta_\lambda
t^\lambda+ Q^\beta
$$
all the coefficients $A^\beta_\sigma$, $B^\beta$ in \eqref{deq2.29} can be killed
except $A^{l+1+\tau}_{k+\nu}$, for $\nu,\,\tau=0,\,1$. Indeed, after the transformation,
\begin{equation*}
\begin{aligned}
{\mathcal{L}}_E \widetilde f^\beta (t)& = {(d_\beta+1)}
\widetilde f^\beta (t)+
\sum_{\gamma =1}^l \left[ R^\beta_\gamma ({d_\gamma -1
-d_\beta })+A^\beta_\gamma \right]t^\gamma \\
&+\left[ A_{l+1}^\beta - ({1 + d_\beta})
R^\beta _{l+1}\right] t^{l+1}+\left[ A_{l+2}^\beta - ({1 + d_\beta})
R^\beta _{l+2}\right] t^{l+2}\\
&+\dfrac{1}{l-k}\,R^\beta_{l+1}+\dfrac{l}{k(l-k)}\,R^\beta_{l+2}+ B^\beta -{(1+d_\beta)}Q^\beta
\end{aligned}
\end{equation*}
The function $\widetilde f^\beta (t)$ does still satisfy \eqref{deq2.28}. Choosing
\begin{equation*}
\begin{aligned}
&R^\beta_{l+1} ={1 \over 1 +d_\beta} A^\beta_{l+1}\, , \quad R^\beta_{l+2} ={1 \over 1 +d_\beta} A^\beta_{l+2}\, ,
\\ &
Q^\beta ={1 \over 1 +d_\beta} \left[ B^\beta + \dfrac{1}{l-k}R^\beta_{l+1}++ \dfrac{l}{k(l-k)}R^\beta_{l+2}
\right]\, ,\end{aligned}
\end{equation*}
one kills the constant term in the r.h.s. of \eqref{deq2.29} and the term linear in
$t^{l+1}$ and $t^{l+2}$. In order to kill other linear terms, putting
$$R^\beta_\gamma = {1 \over d_\beta +d_{\gamma^*}}A^\beta_\gamma,
$$
where $\gamma^*$ is the index dual to $\gamma$ in the sense of
duality defined in the above subsection. We can do this unless $d_\beta = d_{\gamma^* }=0$.
The last equation holds only for $$\beta =l+1+\tau,  \quad \gamma = k+\nu, \quad \nu,\, \tau =0,\, 1.$$
So all linear terms can be killed
except $A^{l+1+\tau}_{k+\nu}$ for $\nu,\,\tau=0,\,1$ in \eqref{deq2.29}.
Thus for $\beta \neq l+1, \, l+2$ the polynomials $f^\beta (t)$ can be
assumed to be homogeneous of the degree $d_\beta +1$.

Next we want to show that {\it for $1\leq \beta \leq l$ the functions $f^\beta(t)$
are polynomials in $t^1, \dots, t^l,$ $e^{t^{l+1}}$,
 $e^{t^{l+2}-t^{l+1}}$.}
We already know
that this is true for the Christoffel coefficients
$\Gamma_\gamma^{\alpha\beta}(t)$.
Denoting
$$
\eta_{\alpha\epsilon} \Gamma^{\epsilon\beta}_{l+i} (t) = \sum_{m=0}^{M_i} \sum_{n=0}^{N_i}
C_{\alpha, m,n}^{\beta,i} e^{{m\,t^{l+1}+n\, ({t^{l+2}-t^{l+1}})}}\equiv
\partial_\alpha \partial_{l+i} f^\beta (t),~~i=1,\,2,
$$
where the coefficients $C_{\alpha, m,n}^{\beta,i}$ are polynomials in $t^1,
\dots, t^l$ and $M_i,\, N_i$ are certain positive integers. From the compatibility condition
$$
\partial_{l+i}\left( \partial _\alpha \partial_{l+j} f^\beta(t) \right)
= \partial_\alpha \left( \partial_{l+i}\p_{l+j} f^\beta(t) \right),\quad i,\,j=1,2,
$$
one obtains
$$
\partial_\alpha C_{l+i,\, 0,\,0}^{\beta,j} =0, \quad \alpha = 1, \dots, l, \quad i, j=1,2.
$$
So $C_{l+i, \,0,\,0}^{\beta,j}$ are constants. But $\partial_{l+i}\p_{l+j} f^\beta(t)$
must be
a weighted homogeneous polynomial in $t^1, \dots, t^l$, $e^{t^{l+1}}$, $e^{{t^{l+2}-t^{l+1}}}$
 of the positive degree $1+d_\beta$. Thus
 \beq C_{l+i, \,0,\,0}^{\beta,j}=0,\quad i,~j=1,~2.\nn\eeq
 and
\beq\partial_\alpha \partial_{l+i} f^\beta (t)=\sum_{m=1}^{M_i}
\sum_{n=1}^{N_i} C_{\alpha, m,n}^{\beta,i} e^{{m\,t^{l+1}+n\, ({t^{l+2}-t^{l+1}})}},
~~ i=1,2.\label{ZZZ4.13}\eeq
From the compatibility condition
\eqa
\partial_{l+i}\left( \partial _{l+j} \partial_{l+j} f^\beta(t) \right)
=\partial_{l+j} \left( \partial_{l+i}\p_{l+j} f^\beta(t) \right) =\partial_{l+j} \left( \partial_{l+j}\p_{l+i} f^\beta(t) \right),
\nn
\eeqa
for $i,\,j=1,2$, one gets $M:=M_1=M_2$ and $N:=N_1=N_2$ and
\beq  C_{l+1,m,n}^{\beta,\, 1}=\frac{m-n}{m}C_{l+2,m,n}^{\beta,\,1}
=\frac{m-n}{m}C_{l+1,m,n}^{\beta,\,2}=\frac{(m-n)^2}{m^2}C_{l+2,m,n}^{\beta,\,2}.\label{ZZZ4.14}\eeq
It follows from \eqref{ZZZ4.13} and \eqref{ZZZ4.14} that
$$
f^\beta(t) = \sum_{m=1}^M\sum_{n=1}^N {1 \over m^2} C_{l+2, m,n}^{\beta,2} e^{{m\,t^{l+1}+n\, ({t^{l+2}-t^{l+1}})}}
+ t^{l+1} D_1^\beta +t^{l+2} D_2^\beta + D_0^\beta
$$
for some new polynomials $D_i^\beta = D_i^\beta (t^1, \dots, t^l),~ i=0,1,2$. Since
$\partial_\alpha \partial_\gamma f^\beta(t)$ must not contain terms
linear in $t^{l+1}$ and $t^{l+2}$, these polynomials $D_i^\beta$ are at most linear
in $t^1, \dots, t^l$. Using the homogeneity of $f^\beta(t)$, so $D_i^\beta =0,~~i=1,\,2$ and
\beq f^\beta(t) = \sum_{m=1}^M\sum_{n=1}^N {1 \over m^2} C_{l+2, m,n}^{\beta,2}
e^{{m\,t^{l+1}+n\, ({t^{l+2}-t^{l+1}})}}+ D_0^\beta,\quad \beta=1,\cdots,l\,.\nn\eeq

The coefficients $ \Gamma^{\al\beta}_{\gamma}(t)$\ must also
satisfy the conditions
\begin{equation}
g^{\al\sigma}\Gamma^{\beta\gamma}_{\sg}=g^{\beta\sg}\Gamma^{\al
\gamma}_{\sg}.\label{deq2.30}
\end{equation}
For $\ \al=l+2$,\ it follows from \eqref{deq2.28}, \eqref{deq2.30}
and \eqref{EZZ4.30} that
\beq
\mathcal{L}_E(\eta^{\beta\ve}\pal_{\ve}f^{\gamma})=d_{\gamma}
g^{\beta\gamma}.
\label{ZZZ4.15}\eeq
Notice that the right side has no summation w.r.t the index $\gamma$.
Because of $\ \deg f^{\gamma}=d_{\gamma}+1$,\ then
$$\ \deg(\eta^{\beta\ve}\pal_{\ve} f^{\gamma})=d_{\beta}+d_{\gamma},\quad \gamma\ne l+1,~~l+2.$$
Thus,
\begin{equation}
(d_{\gamma}+d_{\beta})\eta^{\beta\ve}\pal_{\ve}
f^{\gamma}=d_{\gamma} g^{\beta\gamma},\quad \gamma\ne l+1,~~l+2.\label{deq2.31}
\end{equation}
Putting
$$
F^{\gamma}=\frac{1}{d_{\gamma}} f^{\gamma},\quad \gamma\ne l+1,~l+2
$$
and using \eqref{deq2.31}, one has
\begin{equation}
\eta^{\beta\ve}\pal_{\ve} F^{\gamma}=\eta^{\gamma\ve}\pal_{\ve}F^{\beta},
\quad 1\le \gamma,\beta\le l.\label{deq2.32}
\end{equation}
From \eqref{deq2.32} it follows that a function $F=F(t)$ exists such that
\begin{equation}
F^\gamma = \eta^{\gamma\epsilon} \partial_\epsilon F,
\quad 1\leq\gamma\leq l.\label{deq2.33}
\end{equation}
The dependence of $F$ on $t^k$ is not determined from \eqref{deq2.33}. However,
 putting $\beta = l+2$ in \eqref{deq2.31}, one obtains
\begin{equation}
\partial_k F^\gamma = t^\gamma , ~~1\leq \gamma \leq l.\label{deq2.34}
\end{equation}
By using \eqref{deq2.33} and  \eqref{deq2.34}, one gets
\begin{equation*}
\begin{aligned}
&\partial_{l+2} \left( \partial_k F\right) = t^k,\quad
\partial_{l+1} \left( \partial_k F\right) =t^{k+1}=\sum_{\al=1}^l \eta_{l+1,\,\al} t^{\al},\\
&\partial_\gamma \left( \partial_k F \right) =
\sum_{\al=1}^l \eta_{\gamma\al} t^{\al},\quad \gamma\neq k, k+1, l+1, l+2.
\end{aligned}
\end{equation*}
Notice that $(l+1)^*=k+1$, hence
$$\partial_k F = t^k t^{l+2}+ {1\over 2} \sum_{\alpha,\ \beta \neq k,\ l+2}
\eta_{\alpha\beta} t^\alpha t^\beta + \mu(t^k)
$$
for some function $\mu(t^k)$. Shifting $F(t) \mapsto F(t)+\int \mu(t^k) \, dt^k$
one can kill this function, and the equations in \eqref{deq2.33} still hold
true due to $\ \eta^{ik}=\delta_{i,l+2}$.\
So one has the representation
\begin{equation}
F(t) = {1\over 2} \left(t^k\right)^2 t^{l+2} +{1\over 2} t^k
\sum_{\alpha, \beta \neq k,l+2} \eta_{\alpha\beta} t^\alpha t^\beta + G(t)
\label{deq2.35}
\end{equation}
with some $G(t)$ independent on $t^k$.

From the definition \eqref{deq2.33} of $F$ and the weighted
homogeneity of $f^\gamma,\
\gamma\ne l+1, \, l+2$, it follows that
\beq
{\mathcal{L}}_E F(t) = 2 F(t) + \rho(t^k,t^{k+1}) \label{ZZZ4.21}
\eeq
for certain unknown function $\rho(t^k, t^{k+1})$. By using  \eqref{deq2.35} and the duality condition of the degrees,
one has
\eqa
{\mathcal{L}}_E F(t)&&=\left( t^k \right)^2 t^{l+2} + t^k \sum_{\alpha, \beta \neq k,l+2}
\eta_{\alpha\beta} t^\alpha t^\beta \nn \\
 && \quad+\, {\mathcal{L}}_E G(t)+\frac{l}{2k(l-k)} \left( t^k\right)^2+\frac{1}{l-k}t^k t^{k+1}. \label{ZZZ4.22}
\eeqa
From \eqref{ZZZ4.21} and \eqref{ZZZ4.22}, one gets
$$
\mathcal{L}_E G(t) = 2 G(t)+\rho(t^k,t^{k+1})-\frac{l}{2k(l-k)} \left( t^k\right)^2-\frac{1}{l-k}t^k t^{k+1} .
$$
But $\mathcal{L}_E G(t)$ does not depend on $t^k$, thus there exists an unknown function $\varphi(t^{k+1})$ with
$\deg \varphi(t^{k+1})=2$,
 which does not depend on $t^k$, such that
$$\varphi(t^{k+1})=s\rho(t^k,t^{k+1})-\frac{l}{2k(l-k)} \left( t^k\right)^2-\frac{1}{l-k}t^k t^{k+1}$$
and
$$
\mathcal{L}_E G(t) = 2 G(t)+\varphi(t^{k+1})+c_0,
$$
for a constant $c_0$. Killing the constant by a shift, it follows
that
\beq \mathcal{L}_E G(t) = 2 G(t)+\varphi(t^{k+1}),\label{ZZZ4.23}\eeq
and
 $G(t)=G( t^1, \dots,t^{k-1},t^{k+1},\dots, t^{l+1}, e^{t^{l+1}},e^{t^{l+2}-t^{l+1}})$ is a
weighted homogeneous function of the degree $2$. The above
conditions determine this function uniquely. By integrating polynomials, it yields that $G$ is
a polynomial in $t^1, \dots, t^{k-1}$,\,  $t^{k+2}$, $\dots, t^{l+1}$, $e^{t^{l+1}}$,\
$e^{t^{l+2}-t^{l+1}}$. But because of the existence of $\varphi(t^{k+1})$,
$G$ may have some terms which are not polynomials in $t^{k+1}$.

Substituting $F(t)$ into \eqref{ZZZ4.15}, one gets
\beq g^{\alpha\beta}
 =\mathcal{L}_E F^{\alpha\beta}, \quad F^{\alpha\beta}=\eta^{\alpha\epsilon}\eta^{\beta\delta}\p_\epsilon\p_\delta F.\label{deq3.33} \eeq
For $\alpha =l+1, \beta = l+2$,  the
equation \eqref{deq3.33} reads
\beq \mathcal{L}_E {\partial^2 F(t)\over \partial t^{k}
\partial t^{k+1}}=\dfrac{1}{l-k}. \label{ZZZ4.25}
\eeq
For $\alpha =l+2, \beta = l+2$ the
equation, \eqref{deq3.33} reads
\beq \mathcal{L}_E {\partial^2 F(t)\over \partial t^{k}
\partial t^{k}}=\dfrac{l}{k(l-k)}. \label{ZZZ4.26}
\eeq
For $\alpha = \beta = l+1$, the
equation \eqref{deq3.33} reads
\beq \mathcal{L}_E {\partial^2 F(t)\over \partial t^{k+1}
\partial t^{k+1}}=\dfrac{l-k+1}{l-k}.
\label{ZZZ4.27}\eeq
By using \eqref{ZZZ4.22} and \eqref{ZZZ4.23}, we get
\beq \mathcal{L}_E {\partial^2 G(t)\over \partial t^{k+1}
\partial t^{k+1}}=\dfrac{l-k+1}{l-k} \nn\eeq
and \beq \p^2_{k+1}\varphi(t^{k+1})= \p^2_{k+1} \mathcal{L}_E
G(t)-2  \p^2_{k+1} G(t) =\dfrac{l-k+1}{l-k}.\label{ZZZ4.28}\eeq
 Notice that $\deg\varphi=2$ and integrating \eqref{ZZZ4.28} to obtain
\beq \varphi(t^{k+1})=\dfrac{l-k+1}{2(l-k)}
(t^{k+1})^2.\label{ZZZ4.29} \eeq By using \eqref{deq2.33} and
\eqref{deq2.35}, one can write \beq G(t)=H_0(t)+\psi(t^{k+1})+a_0
t^{l+1}(t^{k+1})^2,\quad \deg \psi(t^{k+1})=2. \label{ZZZ4.30}\eeq
 Here $a_0$ is a constant and $H_0(t)$ is a weighted homogeneous polynomial in
 $t^1, \cdots, t^{k-1}$, $t^{k+1}$,\, $\dots, t^{l+1}$, $e^{t^{l+1}}$,\
$e^{t^{l+2}-t^{l+1}}$ of degree $2$ with $\mathcal{L}_E H_0(t)=2H_0(t)$. In particular, $H_0(t)$ is at
most linear in $t^{k+1}$. By substituting \eqref{ZZZ4.29} and
\eqref{ZZZ4.30} into \eqref{ZZZ4.23}, one has
 \beq t^{k+1}\dfrac{\p \psi(t^{k+1})}{\p
{t^{k+1}}}=2\psi(t^{k+1})+\left(\dfrac{k+1}{2k}-\frac{a_0}{k}\right)(t^{k+1})^2.
\label{ZZZ4.31}\eeq The solutions of \eqref{ZZZ4.31} are \beq
\psi(t^{k+1})=c_1\ (t^{k+1})^2+
\left(\dfrac{l-k+1}{2(l-k)}-\frac{a_0}{l-k}\right)\,
(t^{k+1})^2\log(t^{k+1}),\label{ZZZ4.32} \eeq where $c_1$ is
 an integral constant.

For $\alpha =k+1, \beta = l+1$,  the equation \eqref{deq3.33} reads
\beq g^{k+1,l+1}(t)=\mathcal{L}_E {\partial^2 F(t)\over \partial t^{k+1}
\partial t^{l+1}}=t^k+{2a_0}t^{k+1}+\mathcal{L}_E {\partial^2 H_0(t)\over \partial t^{k+1}
\partial t^{l+1}}.  \label{ZZZ100}
\eeq
By comparing \eqref{ZZZ99} with \eqref{ZZZ100}, one gets
$$ a_0=\frac{1}{2},\quad \mathcal{L}_E {\partial^2 H_0(t)\over \partial t^{k+1}
\partial t^{l+1}}=g_0(t). $$
 Thus,
$$G(t)=H(t)+\dfrac{1}{2}\, (t^{k+1})^2\log(t^{k+1}),$$
where $H(t)=H_0(t)+c_1\ (t^{k+1})^2+\dfrac{1}{2}(t^{k+1})^2t^{l+1}$.

 In a word, we show that the existence of a unique weighted homogeneous
polynomial
$$H(t):=H(t^1,\dots,t^{k-1},t^{k+1},\dots,t^{l+1},e^{t^{l+1}}, e^{t^{l+2}-t^{l+1}})$$
of degree $2$ such that the function \begin{equation} F = {1\over
2} \left(t^k\right)^2 t^{l+2} +{1\over 2} t^k \sum_{\alpha, \beta
\neq k,l+2} \eta_{\alpha\beta} t^\alpha t^\beta
+\dfrac{1}{2}(t^{k+1})^2\log(t^{k+1})+H(t)
\label{zz2}
\end{equation}
 satisfies the equations
 \beq \label{zz3} g^{\alpha\beta}
 =\mathcal{L}_E F^{\alpha\beta},\quad \Gamma^{\alpha\beta}_\gamma=d_\beta\,
c^{\alpha\beta}_\gamma,\quad \alpha,\beta,\gamma=1,\dots,l+2, \eeq
where $c^{\alpha\beta}_{\gamma}=\dfrac{\pal F^{\alpha\beta}}{\pal t^\gamma}$.

Obviously, the function $F$ satisfies the equations \beq
\frac{\pal^3 F}{\pal t^k \pal t^i \pal t^j}=\eta_{ij},\quad
i,j=1,\dots, l+2 \eeq and the quasi-homogeneity condition \beq
{\mathcal L}_E F=2 F+\frac{l}{2k(l-k)} \left(
t^k\right)^2+\frac{1}{l-k}t^k t^{k+1}+\dfrac{l-k+1}{2(l-k)}
(t^{k+1})^2. \eeq From the properties of a flat pencil of metrics
\cite{Du1996}, it follows that $F$ also satisfies associativity
equations of WDVV \beq\label{zz7} c_{m}^{ij}\,c_{q}^{m
p}=c_{m}^{ip}\,c_{q}^{m j} \eeq for any set of fixed indices
$i,j,p,q$. Now the theorem follows from above properties of the
function $F$ and the simple identity $\mathcal{L}_E e=-e$.  This
completes the proof of the theorem. \end{proof}

\subsection{Examples}
We end this section by giving some examples to illustrate the above
construction.  For the brevity,
instead of $t^1,\dots,t^{l+2}$ we will
denote the flat coordinates of the metric $\eta^{ij}$ by
$t_1,\dots, t_{l+2}$, and also denote $\p_i=\frac{\p}{\p
t_i}$ in this subsection.

\begin{ex} \label{ex5.1} Let $\widetilde{W}$ be the extended affine Weyl group
$\widetilde{W}^{(1,2)}(A_2)$, then
\beq d_{1,1}=\frac{2}{3},\quad  d_{1,2}=d_{2,1}=\frac{1}{3}, \quad d_{2,2}=\frac{2}{3}\nn  \eeq
 and
\begin{eqnarray*}
&&y^1={e^{\frac{2\pi\,i}{3}\,(2x_3+x_4)}}(e^{2\pi\,i x_1}+e^{2\pi\,i x_2}+e^{-2\pi\,i (x_1+x_2)}), \\
&&y^2={e^{\frac{2\pi\,i}{3}\,(x_3+2 x_4)}}(e^{-2\pi\,i x_1}+e^{-2\pi\,i x_2}+e^{2\pi\,i (x_1+x_2)}), \\
&&y^3=2\,i\pi\,x_{{3}}, \quad y^4=2\,i\pi\,x_{{4}}.
\end{eqnarray*}
The metric $(~,~)^{\sptilde}$ has the form
\begin{equation*}
((dx_a,dx_b)^{\sptilde})=\frac{1}{4\pi^2}\left( \begin
{array}{cccc} ~~\frac{2}{3}&~~\frac{1}{3}\,&~~0&~~0\\ ~~\frac{1}{3}& ~~\frac{2}{3}&~~0&~~0\\
~~0&~~0&~~-2&~~1\\ ~~0&~~0&~~1&-2\end {array}
 \right).
 \end{equation*}
 We thus have
$$ (g^{ij}(y))=\left(\dsum_{a,b=1}^{4}\dfrac{\p y^i}{\p x_a}\dfrac{\p
y^j}{\p x_b}(dx_a,dx_b)^{\sptilde}\right)=\left( \begin {array}{cccc} 2\,y^{{2}}{e^{y^{{3}}}}&3\,{e^{y^{{3}}+y^{{4}}}}&y^{{1}}&0\\
\noalign{\medskip}3\,{e^{y^{{3}}+y^{{4}}}}&2\,y^{{1}}{e^{y
^{{4}}}}&0&y^{{2}}\\
\noalign{\medskip}y^{{1}}&0&2&-1
\\
\noalign{\medskip}0&y^{{2}}&-1&2\end {array} \right)$$
and
$$(\eta^{ij}(y))=\left(\mathcal{L}_eg^{ij}(y)\right)\left( \begin {array}{cccc} 2\,{e^{y^{{3}}}}&0&1&0\\
\noalign{\medskip}0&2\,{e^{y^{{4}}}}&0&1\\
\noalign{\medskip}1&0&0&0
\\
\noalign{\medskip}0&1&0&0\end {array} \right),\quad e=\frac{\p}{\p y^1}+\frac{\p}{\p y^2}.$$
The flat coordinates for the metric $(\eta^{ij}(y))$ are
\eqa && t_1=y^1-e^{y^3},\quad t_2=-y^1+y^2+e^{y^3}-e^{y^4},\quad t_3=y^4,\quad  t_4=y^3+y^{{4}}.\nn\eeqa
The potential has the expression
\eqa
F=\frac{1}{2}\,{t_{{1}}}^{2}t_{{4}}+t_1t_2t_3+\frac{1}{2}\,t_{{2}}^2t_{{3}}
+\,e^{t_4}-t_2e^{t_3}+t_2e^{t_4-t_3}+ {\bf \frac{1}{2}t_2^2\log t_2}\nn
\eeqa
and the unit vector field is
 $$e=\p_1$$
and the Euler vector field is given by
$$E=t_1{\p_1}+t_2{\p_2}+{\p_3}+2{\p_4}.$$
\end{ex}

\begin{ex}Let $\widetilde{W}$ be the extended affine Weyl group $\widetilde{W}^{(1,2)}(A_3)$,
then
\beq d_{1,1}=\frac{3}{4}, \quad d_{1,2}=\frac{1}{2}, \quad d_{1,3}=\frac{1}{4}, \quad d_{2,2}=1,
\quad d_{2,3}=\frac{1}{2},\quad d_{3,3}=\frac{3}{4}\nn\eeq
 and
 \begin{eqnarray*}
&&y^1={e^{\frac{\pi\,i}{2}\,(3x_4+2x_5)}}\dsum_{j=1}^4{\xi_a}, \quad
y^2={e^{{\pi\,i}\,(x_4+2x_5)}}\dsum_{1\leq a<b\leq 4} {\xi_a\xi_b}, \\
&&y^3={e^{\frac{\pi\,i}{2}\,(x_4+2x_5)}}\dsum_{1\leq a<b<c\leq 4} {\xi_a\xi_b\xi_c}, \quad
y^4=2\,i\pi\,x_{{4}},\quad  y^5=2\,i\pi\,x_{{5}},
\end{eqnarray*}
where $\xi_j=e^{2\pi\,i (x_j-x_{j-1})}$ for $j=1,2,3$ and $x_0=0$, $\xi_4=e^{-2\pi\,i x_3}$.
The metric $(~,~)^{\sptilde}$ has the form
\begin{equation*}
((dx_i,dx_j)^{\sptilde})=\frac{1}{4\pi^2}\left( \begin
{array}{ccccc} \frac{3}{4}&~~\frac{1}{2}&~~\frac{1}{4}\,&~~0&~~0\\
\frac{1}{2}&~~1& ~~\frac{1}{2}&~~0&~~0\\
\frac{1}{4}&~~\frac{1}{2}&~~\frac{3}{4}&~~0&~~0\\
 0&~~0&~~0&~~-2&~~\frac{1}{2}\\ 0&~~0&~~0&~~\frac{1}{2}&-\frac{3}{2}\end {array}
 \right).
 \end{equation*}
 We thus have
 $$(g^{ij}(y))=\left( \begin {array}{ccccc} 2\,y^{{2}}{e^{y^{{4}}}}&3\,y^{{3}}{e^{y^{{4}}+y^{{5}}}}&4\,{e^{y^{{4}}+y^{{5}}}}&y^{{1}}&0\\
 \noalign{\medskip}
3\,y^{{3}}{e^{y^{{4}}+y^{{5}}}}&2\,y^{{1}}y^{{3}}{e^{y^{{5}}}}+4\,{e^{
2\,y^{{5}}+y^{{4}}}}&3\,y^{{1}}{e^{y^{{5}}}}&0&y^{{2}}
\\
\noalign{\medskip}4\,{e^{y^{{4}}+y^{{5}}}}&3\,y^{{1}}{e^{y^{{5}}}}&-
\frac{1}{2}\,({y^{{3}}})^{2}+2\,y^{{2}}&0&\frac{1}{2}\,y^{{3}}\\\noalign{\medskip}y^{{1}
}&0&0&2&-1\\\noalign{\medskip}0&y^{{2}}&\frac{1}{2}\,y^{{3}}&-1&\frac{3}{2}
\end {array} \right)
$$
and
 $$(\eta^{ij}(y))=\left(\mathcal{L}_eg^{ij}(y)\right)=\left( \begin {array}{ccccc} 2\,{e^{y^{{4}}}}&0&0&1&0\\
 \noalign{\medskip}
0&2\,y^{{3}}{e^{y^{{5}}}}&3\,{e^{y^{{5}}}}&0&1
\\
\noalign{\medskip}0&3\,{e^{y^{{5}}}}&2&0&0\\
\noalign{\medskip}1&0&0&0&0\\
\noalign{\medskip}0&1&0&0&0
\end {array} \right),\quad e=\frac{\p}{\p y^1}+\frac{\p}{\p y^2}.
$$
To write down the flat coordinates, we first introduce the following variables
\eqa z^1=y^1,~ z^2=y^{{2}}-y^{{1}},~ z^3=y^3, ~ z^4=y^5,~z^5=y^4+y^5,\nn \eeqa
and obtain
$$(\eta^{ij}(z))=\left( \begin {array}{ccccc} 2\,{e^{z^{{5}}-z^{{4}}}}&-2\,{e^{z^{{5}}-z^{{4}}}}&0&0&1\\
\noalign{\medskip}-2\,{e^{z^{{5}}-z^{{4}}}}&2\,{e^{z^{{5}}-z^{{4}}}}+2\,z^{{3}}{e^{z^{{4}}}}&3\,{e^{z^{{4}}}}&1&0
\\
\noalign{\medskip}0&3\,{e^{z^{{4}}}}&2&0&0\\
\noalign{\medskip}0&1&0&0&0\\
\noalign{\medskip}1&0&0&0&0\end {array} \right),
$$
then the flat coordinates for the metric $(\eta^{ij}(z)$ are given by
\eqa
&&t_1=z^{{1}}-e^{z^5-z^4},\quad t_2=z^{{2}}-z^3e^{z^4}+e^{z^5-z^4}+e^{2z^4},\nn\\
&&t_3=z^3-e^{z^4}, \quad t_4=z^4,\quad t_5=z^5.\nn
\eeqa
The potential has the expression
\eqa F&=& \frac{1}{2}\,{t_{{1}}}^{2}t_{{5}}+t_{{1}}t_{{2}}t_{{4}}+\frac{1}{2}\,{t_{{2}}}^{2}t_{{
4}}+\frac{1}{4}\,{t_{{3}}}^{2}t_{{2}}+\frac{1}{4}\,{t_{{3}}}^{2}t_{{1}}-{\frac {1}{96}
}\,{t_{{3}}}^{4}\nn\\&+&t_{{3}}{e^{t_{{5}}}}-t_{{2}}t_{{3}}{e^{t_{{4}}}}+t_{{2}}{e^{
t_{{5}}-t_{{4}}}}+\frac{1}{2}\,t_{{2}}{e^{2\,t_{{4}}}}+{\bf \frac{1}{2}\,{t_{{2}}}^{
2}\log  \left( t_{{2}} \right)}
\nn
\eeqa
and the unit vector field is
$$e=\p_1$$
and the Euler vector field is given by
$$E=t_1{\p_1 }+t_2{\p_2}+\frac{1}{2}t_3{\p_3}
+\frac{1}{2}{\p_4}+\frac{3}{2}{\p_5}.$$

\end{ex}

\begin{ex}Let $\widetilde{W}$ be the extended affine Weyl group $\widetilde{W}^{(2,3)}(A_3)$,
then
\beq d_{1,1}=\frac{3}{4}, \quad d_{1,2}=\frac{1}{2}, \quad d_{1,3}=\frac{1}{4}, \quad d_{2,2}=1,
\quad d_{2,3}=\frac{1}{2},\quad d_{3,3}=\frac{3}{4}\nn\eeq
 and
 \begin{eqnarray*}
&&y^1={e^{\frac{\pi\,i}{2}\,(2x_4+x_5)}}\dsum_{j=1}^4{\xi_a}, \quad
y^2={e^{{\pi\,i}\,(2x_4+x_5)}}\dsum_{1\leq a<b\leq 4} {\xi_a\xi_b}, \\
&&y^3={e^{\frac{\pi\,i}{2}\,(2x_4+3x_5)}}\dsum_{1\leq a<b<c\leq 4} {\xi_a\xi_b\xi_c}, \quad
y^4=2\,i\pi\,x_{{4}},\quad  y^5=2\,i\pi\,x_{{5}},
\end{eqnarray*}
where $\xi_j=e^{2\pi\,i (x_j-x_{j-1})}$ for $j=1,2,3$ and $x_0=0$, $\xi_4=e^{-2\pi\,i x_3}$.
The metric $(~,~)^{\sptilde}$ has the form
\begin{equation*}
((dx_i,dx_j)^{\sptilde})=\frac{1}{4\pi^2}\left( \begin
{array}{ccccc} \frac{3}{4}&~~\frac{1}{2}&~~\frac{1}{4}\,&~~0&~~0\\
\frac{1}{2}&~~1& ~~\frac{1}{2}&~~0&~~0\\
\frac{1}{4}&~~\frac{1}{2}&~~\frac{3}{4}&~~0&~~0\\
 0&~~0&~~0&~~-\frac{3}{2}&~~1\\ 0&~~0&~~0&~~1&-2\end {array}
 \right).
 \end{equation*}
We thus have
$$ (g^{ij}(y))=\left( \begin {array}{ccccc} - \frac{1}{2}\,({y^{{1}}})^{2}+2\,y^{{2}}&
3\,y^{{3}}{e^{y^{{4}}}}&4\,{e^{y^{{4}}+y^{{5}}}}&\frac{1}{2}\,y^{{1}}&0
\\
\noalign{\medskip}3\,y^{{3}}{e^{y^{{4}}}}&2\,y^{{1}}y^{
{3}}{e^{y^{{4}}}}+4\,{e^{y^{{5}}+2\,y^{{4}}}}&3\,y^{{1}}{e^{y^{{4}}+y^{{5}}}}&y^{{2
}}&0\\\noalign{\medskip}4\,{e^{y^{{4}}+y^{{5}}}}&3\,y^{{1}}{e^{y^{{4}}+y^{{5}
}}}&2\,y^{{2}}{e^{y^{{5}}}}&0&y^{{3}}\\\noalign{\medskip}\frac{1}{2}\,y
^{{1}}&y^{{2}}&0&\frac{3}{2}&-1\\\noalign{\medskip}0&0&y^{{3}}&-1&2
\end {array} \right)
$$
and
$$(\eta^{ij}(y))=\left(\mathcal{L}_eg^{ij}(y)\right)=\left( \begin {array}{ccccc} 2&3\,{e^{y^{{4}}}}&0&0&0\\\noalign{\medskip}3\,{e^{y^{{4}}}}&2\,y^{{1}}{e^{y^{{4}}}}&0&1&0
\\\noalign{\medskip}0&0&2\,{e^{y^{{5}}}}&0&1\\\noalign{\medskip}0&1&0&0
&0\\\noalign{\medskip}0&0&1&0&0\end {array} \right),\quad e=\frac{\p}{\p y^2}+\frac{\p}{\p y^3}.$$
To write down the flat coordinates, we first introduce the following variables
\eqa z^1=y^1,~ z^2=y^{{2}},~ z^3=y^3-y^2,~z^4=y^5,~z^5=y^4+y^5,\nn \eeqa
and obtain
$$(\eta^{ij}(z))=\left( \begin {array}{ccccc} 2&3\,{e^{z^{{5}}-z^{{4}}}}&-3\,{e^{z^{{5}}-z^{{4}}}}&0&0\\\noalign{\medskip}3\,{e^{z^{{5}}-z^{{4}}}}&2\,{e^{z^
{{5}}-z^{{4}}}}z^{{1}}&-2\,z^{{1}}{e^{z^{{5}}-z^{{4}}}}&0&1
\\\noalign{\medskip}-3\,{e^{z^{{5}}-z^{{4}}}}&-2\,z^{{1}}{e^{z^{{5}}-z^{{4}}}
}&2\,z^{{1}}{e^{z^{{5}}-z^{{4}}}}+2\,{e^{z^{{4}}}}&1&0
\\\noalign{\medskip}0&0&1&0&0\\\noalign{\medskip}0&1&0&0&0\end {array}
 \right),
$$
then the flat coordinates for the metric $(\eta^{ij}(z)$ are given by
\eqa
&&t_1=z^{{1}}-e^{z^5-z^4},\quad t_2=z^{{2}}-z^1e^{z^5-z^4}+e^{2z^5-2z^4},\nn\\
&&t_3=z^3-e^{z^4}+z^1e^{z^5-z^4}-e^{2(z^5-z^4)}, \quad t_4=z^4,\quad t_5=z^5.\nn
\eeqa
The potential has the expression
\eqa F &=& \frac{1}{2}\,{t_{{2}}}^{2}t_{{5}}+t_{{2}}t_{{3}}t_{{4}}+\frac{1}{2}\,{t_{{3}}}^{2}t_{{
4}}+\frac{1}{4}\,{t_{{1}}}^{2}t_{{2}}-{\frac {1}{96}}\,{t_{{1}}}^{4}\nn\\
&+&t_{{1}}{e^{t_{{5}}}}-t_{{3}}{e^{t_{{4}}}}+t_{{1}}t_{{3}}{e^{t_{{5}}-t_{{4}}}}-\frac{1}{2}\,{e^{2
\,t_{{5}}-2\,t_{{4}}}}t_{{3}}+{\bf \frac{1}{2}\,{t_{{3}}}^{2}\log  \left(
t_{{3}} \right)}
\nn
\eeqa
and the unit vector field is
$$e=\p_1$$
and the Euler vector field is given by
$$E=\frac{1}{2}t_1{\p_1 }+t_2{\p_2}+t_3{\p_3}
+{\p_4}+\frac{3}{2}{\p_5}.$$

\end{ex}

\section{The group ${\widetilde{W}^{(k,k+1)}}(A_l)$ and the Hurwitz space $\mathbb{M}_{k,l-k+1,1}$}\label{sec-6}
In this section we want to show that the space $\mathbb{M}_{k,l-k+1,1}$  as a Frobenius
manifold is isomorphic to the orbit space $\mathcal{M}$ of the extended affine
Weyl group $\widetilde{W}^{(k,k+1)}(A_l)$, where
$\mathcal{M}={\mathcal M}^{(k,k+1)}\setminus\{\ty_{l+1}=0\}\cup\{\ty_{l+2}=0\}$ for $1\leq k<l$.

 Let $\mathbb{M}_{k,l-k+1,1}$ be the space of a particular class of LG superpotentials consisting of trigonometric-Laurent
 series of one variable with tri-degree $(k+1,l-k,1)$,
these being functions of the form
\beq \lambda(\varphi)=(e^{{\bf i}\varphi}-a_{l+2})^{-1}(e^{{\bf i}(k+1)\varphi}+a_1e^{{\bf i}k\varphi}+\cdots+a_{l+1} e^{{\bf i}(k-l)\varphi}), \quad
 a_{l+1}a_{l+2}\ne 0,\label{df5.1} \eeq
where $a_j \in \mathbb{C}$ for $j=1,\cdots, l+2$. For brevity, we denote $m=l-k$ in this section.
 According to \cite{Du1996}, the space $\mathbb{M}_{k,m+1,1}$ is a simple Hurwitz space and
carries a natural structure of Frobenius manifold. The invariant inner product
$\eta$ and the intersection form $g$ of two vectors $\p'$, $\p''$
tangent to $M_{k,m+1,1}$ at a point $\lambda(\varphi)$ can be defined by
the following formulae
\beq \widetilde{\eta}(\p',\p'')=(-1)^{k+1}\dsum_{|\lambda|<\infty}
\res_{d\lambda=0}
\dfrac{\p'(\lambda(\varphi)d\varphi)\p''(\lambda(\varphi)d\varphi)}{d\lambda(\varphi)},
\label{AM2}\eeq
and
\beq \widetilde{g}(\p',\p'')=-\dsum_{|\lambda|<\infty}
\res_{d\lambda=0}
\dfrac{\p'(\log\lambda(\varphi)d\varphi)\p''(\log\lambda(\varphi)d\varphi)}{d\log\lambda(\varphi)}.
\label{AM3}\eeq
In these formulae, the derivatives
$\p'(\lambda(\varphi)d\varphi)$ $etc.$ are to be calculated keeping $\varphi$ fixed.
The formulae \eqref{AM2} and \eqref{AM3} uniquely determine
multiplication of tangent vectors on $\mathbb{M}_{k,m+1,1}$ assuming that the
Euler vector field $E$  has the form
\beq
E=\dsum_{j=1}^{l+1}\frac{j}{k}a_j\frac{\p}{\p
a_j}+\frac{1}{k}a_{l+2}\frac{\p}{\p
a_{l+2}}.\label{AM4}\eeq
For any tangent vectors $\p'$, $\p''$ and $\p'''$ to $M_{k,m+1,1}$ , one has \beq
c(\p',\p'',\p''')=\widetilde{\eta}(\p'\cdot \p'',\p''')=-\dsum_{|\lambda|<\infty} \res_{d\lambda=0}
\dfrac{\p'(\lambda(\varphi)d\varphi)\p''(\lambda(\varphi)d\varphi)\p''(\lambda(\varphi)d\varphi)}{d\lambda(\varphi)d\varphi}.
\label{AM6}\eeq
The canonical coordinates $u_1,\cdots, u_{l+2}$ for this multiplication are
the critical values of $\lambda(\varphi)$ and
\beq \p_{u_\alpha}\cdot \p_{u_\beta}=\delta_{\alpha\beta}\p_{u_\alpha}, \quad
\mbox{where}\quad \p_{u_\alpha}=\frac{\p}{\p {u_\alpha}}.\label{AM7}\eeq

We start with factorizing
\beq \lambda(\varphi)=(e^{i\varphi}-e^{i\varphi_{l+2}})^{-1}
e^{-im\varphi}\prod_{b=1}^{l+1}(e^{i\varphi}-e^{i\varphi_b}).\label{AM8}\eeq



 \begin{thm}\label{Main2}
Let $\mathfrak{f}: \mathcal{M} \to \mathbb{M}_{k,m+1,1}$ be induced by the map
\beq (x_1,\cdots, x_{l+2})\mapsto (\varphi_1,\cdots,\varphi_{l+2})\label{AM9}\eeq
with
\beq\begin{array}{l}
\varphi_1=2\pi(\rho+x_1),\quad \varphi_j=2\pi(\rho+x_j-x_{j+1}), ~~j=2,\cdots, l,\\
\varphi_{l+1}=2\pi(\rho-x_{l}),\quad \varphi_{l+2}=2\pi x_{l+1}, \end{array}
\label{AM10}
\eeq
where $\rho=\frac{m+1}{l+1}x_{l+1}+\frac{m}{l+1}x_{l+2}$.
Then $\mathfrak{f}$ is an $m$-fold covering map,  which is also a local isomorphism
between the Frobenius manifolds
$\mathcal{M}$ and $\mathbb{M}_{k,m+1,1}$.
\end{thm}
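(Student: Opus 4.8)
The strategy is to make the change of variables induced by \eqref{AM10} completely explicit, read off the covering degree from it, and then match the intersection form, the flat metric, the unity and the Euler field of the two structures so that the uniqueness part of Theorem \ref{thm4.7} forces $\mathfrak f$ to be an isomorphism. First I would expand the factorization \eqref{AM8}. Writing $e^{{\bf i}\varphi_b}=e^{2\pi{\bf i}\rho}\,e^{2\pi{\bf i}v_b}$ for $b=1,\dots,l+1$ (where the $v_b$ are the $A_l$-coordinates, so $\sum_b v_b=0$) one gets $a_j=(-1)^j\sigma_j\big(e^{{\bf i}\varphi_1},\dots,e^{{\bf i}\varphi_{l+1}}\big)=(-1)^je^{2\pi{\bf i}j\rho}y_j(\bx)$ for $1\le j\le l+1$, together with $a_{l+2}=e^{{\bf i}\varphi_{l+2}}=\ty_{l+1}$. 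Comparing the exponent $j\rho$ with $d_{j,k}x_{l+1}+d_{j,k+1}x_{l+2}$ and using $m=l-k$ turns this into the monomial dictionary
\begin{equation*}
a_j=(-1)^j\ty_j\ \ (1\le j\le k),\qquad a_j=(-1)^j\,\ty_{l+1}^{\,j-k}\,\ty_{l+2}^{\,j-k-1}\,\ty_j\ \ (k<j\le l),
\end{equation*}
\begin{equation*}
a_{l+1}=(-1)^{l+1}\ty_{l+1}^{\,m+1}\ty_{l+2}^{\,m},\qquad a_{l+2}=\ty_{l+1}.
\end{equation*}
Each $a_j$ is thus a Laurent monomial in the generators of $\mathcal A$ and hence $\tw$-invariant, so $\mathfrak f$ descends to $\mathcal M$; moreover $a_{l+1}a_{l+2}=(-1)^{l+1}\ty_{l+1}^{\,m+2}\ty_{l+2}^{\,m}\ne0$ on $\mathcal M$, so $\mathfrak f$ does land in $\mathbb M_{k,m+1,1}$.

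Inverting the dictionary then gives the degree. For generic target data one recovers $\ty_{l+1}=a_{l+2}$ and $\ty_j\ (1\le j\le k)$ uniquely, whereas $\ty_{l+2}$ is pinned only through $\ty_{l+2}^{\,m}=(-1)^{l+1}a_{l+1}a_{l+2}^{-(m+1)}$, after which the remaining $\ty_j\ (k<j\le l)$ are determined. Hence each fiber is exactly the set of $m$ distinct $m$-th roots for $\ty_{l+2}$, all lying in $\mathcal M$ and separated there because $\ty_{l+2}$ is a genuine coordinate by Theorem \ref{thm2.4}. Since the defining relations are monomial and nonzero on $\mathcal M$, $\mathfrak f$ is a local biholomorphism, so it is an unramified $m$-fold covering.

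It remains to identify the Frobenius data. The Euler fields agree: the dictionary shows $a_j$ is homogeneous of degree $j/k$ and $a_{l+2}$ of degree $1/k$ for the grading $\deg$ of \eqref{df2.11}, which is precisely the grading generated by $E$ in \eqref{AM4}, so $\mathfrak f^*E=E$. The unity also matches: the shift $\lambda\mapsto\lambda+\epsilon$ is the flat field $\p/\p a_k-a_{l+2}\,\p/\p a_{k+1}$, and under the dictionary $a_k=(-1)^k\ty_k$, $a_{k+1}=(-1)^{k+1}\ty_{l+1}\ty_{k+1}$ this pulls back, up to the sign fixed by the normalization $(-1)^{k+1}$ in \eqref{AM2}, to $\p/\p y^k+\p/\p y^{k+1}=e$. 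For the two metrics it suffices to prove $\mathfrak f^{*}\widetilde g=g$, since $\widetilde\eta=\mathcal L_{e}\widetilde g$ and $\eta=\mathcal L_e g$ by \eqref{dfeta}, and $\mathcal L_e$ commutes with $\mathfrak f^{*}$. To compute $\widetilde g$ from \eqref{AM3} in the pulled-back coordinates $x_a$, I would differentiate $\log\lambda$ at fixed $\varphi$: by \eqref{AM10} this is a rational function of $e^{{\bf i}\varphi}$ with simple poles only at $e^{{\bf i}\varphi}=e^{{\bf i}\varphi_b}$ and $e^{{\bf i}\varphi}=a_{l+2}$, and I would move the residues in \eqref{AM3} off the critical points of $\lambda$ and onto these poles and onto $\varphi\to\pm{\bf i}\infty$ by the residue theorem on the $e^{{\bf i}\varphi}$-sphere, collapsing the sum to elementary residues of $\log\lambda$; the result is the constant form \eqref{df3.01}--\eqref{df3.02}, i.e. $(dx_a,dx_b)^{\thicksim}$, which is $g$ by definition.

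With $g$, $\eta$, $E$ and $e$ all matched, the pullback of the $\mathbb M_{k,m+1,1}$ Frobenius structure is a Frobenius structure on $\mathcal M$ carrying exactly the data listed in Theorem \ref{thm4.7}; its uniqueness clause identifies it with the orbit-space structure, so $\mathfrak f$ is a local isomorphism of Frobenius manifolds. The one genuinely non-formal step is the residue evaluation of $\widetilde g$: one must keep track of the contributions at the simple poles $e^{{\bf i}\varphi}=e^{{\bf i}\varphi_b}$, at the pole $e^{{\bf i}\varphi}=a_{l+2}$, and at $\varphi\to\pm{\bf i}\infty$ produced by the $e^{-{\bf i}m\varphi}$ prefactor, and verify that both the antidiagonal constants and the $\mathbb R^2$-block reproduce \eqref{df3.01}--\eqref{df3.02} exactly. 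This parallels the analogous computation in \cite{DZ1998}; everything else reduces to bookkeeping with the monomial dictionary.
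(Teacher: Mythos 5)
Your proposal is correct and follows essentially the same route as the paper: the same monomial dictionary \eqref{AM11} for the $a_j$, the same matching of $E$ and $e$, and the reduction of everything else to checking that the pulled-back intersection form equals $(dx_a,dx_b)^{\thicksim}$, with the flat metric then recovered as $\mathcal{L}_e$ of the intersection form. The residue evaluation you defer is precisely the computation the paper carries out via the canonical coordinates $u_\alpha$, the Lagrange interpolation formula \eqref{AM14} and the identity \eqref{AM26}, and your fiber-counting via $m$-th roots of $\ty_{l+2}$ is an equivalent (slightly more explicit) justification of the $m$-fold covering claim than the paper's Jacobian remark.
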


\begin{proof} With the use of \eqref{df5.1} and \eqref{AM8}, one obtains
$$a_j=(-1)^j\sigma_j(e^{i\varphi_1},\cdots,e^{i\varphi_{l+1}}), \quad a_{l+2}=e^{i\varphi_{l+2}},\quad j=1,\cdots,l+1.$$
From the formulae \eqref{df1.2}, \eqref{df2.9} and \eqref{AM10}, it follows
\beq
a_j=\left\{
\begin{array}{ll}
(-1)^j y^j,& j=1,\cdots, k,\\
 (-1)^{j}y^{j}e^{(j-k)y^{l+1}+(j-k-1)y^{l+2}},& j=k+1,\cdots,l, \\
 (-1)^{l+1}e^{(m+1)y^{l+1}+my^{l+2}},& j=l+1, \\
e^{y^{l+1}}, & j=l+2.
\end{array}\right.\label{df5.10}
\eeq
Then the map $\mathfrak{f}: (\tilde{y}_1,\cdots, \tilde{y}_{l+2})\mapsto
(a_1,\cdots, a_{l+2})$ is given by
\eqa  \label{AM11}
&& a_j=(-1)^j \tilde{y}_{j},\quad j=1,\cdots, k,\nn\\
&& a_{k+s}=(-1)^{k+s} \tilde{y}_{k+s} \tilde{y}_{l+1}^{s} \tilde{y}_{l+2}^{s-1},\quad s=1,\cdots,l-k,\\
&&a_{l+1}=(-1)^{l+1}\tilde{y}_{l+1}^{m+1}\tilde{y}_{l+2}^m,\quad a_{l+2}=\tilde{y}_{l+1}\nn
\eeqa
and the Jacobian of $\mathfrak{f}$ is proportional to $\tilde{y}_{l+1}^{m+1}\tilde{y}_{l+2}^{m-1}$. So $\mathfrak{f}$
is an $m$-fold covering map.

Now let us proceed to prove that $\mathfrak{f}$ is a local isomorphism
between the two Frobenius manifolds.  By using \eqref{AM11}, it is easy to check that the Euler vector fields
\eqref{AM4} and \eqref{AM00} coincide. So it suffices to prove that the
intersection form \eqref{AM3} coincides with the intersection form
of the orbit space, and the metric \eqref{AM2} coincides with the metric \eqref{dfeta}.

Let us denote the roots of $\lambda'(\varphi)$ by $\psi_\gamma,~ \gamma=1,\dots,
l+2$, then
\beq\lambda'(\varphi)=(k+1)i(e^{i\varphi}-e^{i\varphi_{l+2}})^{-2}
e^{-im\varphi}\prod_{\gamma=1}^{l+2}(e^{i\varphi}-e^{i\psi_\gamma}). \label{AM12}\eeq
We define   $u_\alpha=\lambda(\psi_\alpha)$, $\alpha=1,\dots,
l+2$, then
\beq
  \p_{u_\alpha}\lambda(\varphi)|_{\varphi=\psi_\beta}=\delta_{\alpha\beta}.
\label{AM13}\eeq
Using \eqref{AM8}, \eqref{AM13} and the Lagrange interpolation formula one obtains
\beq
 \p_{u_\alpha}\lambda(\varphi)=\frac{ie^{i\psi_\alpha}\lambda'(\varphi)}{(e^{i\varphi}
 -e^{i\psi_\alpha})\lambda''(\psi_\alpha)}.
\label{AM14}\eeq
It follows from \eqref{AM8} and \eqref{AM14} that
\beq
\frac{ie^{i\psi_\alpha}\lambda'(\varphi)}{(e^{i\varphi}
 -e^{i\psi_\alpha})\lambda''(\psi_\alpha)}
 =\lambda(\varphi) \frac{i\p_{u_\alpha}\varphi_{l+2} e^{i\varphi_{l+2}}}{e^{i\varphi}-e^{i\varphi_{l+2}}}
 -\sum_{s=1}^{l+1} \frac{i\p_{u_\alpha}\varphi_{s} e^{i\varphi_{s}}\lambda(\varphi)}{e^{i\varphi}-e^{i\varphi_s}}.
\label{AM15}\eeq
Putting $\varphi=\varphi_b$ in \eqref{AM15} for $b=1,\cdots,l+1$, then
\beq \p_{u_{\alpha}} \varphi_b=- \frac{ie^{i\psi_\alpha}}{(e^{i\varphi_b}
 -e^{i\psi_\alpha})\lambda''(\psi_\alpha)},\quad b=1,\cdots,l+1. \label{AM16}\eeq
Let us  rewrite \eqref{AM15} as
\beq
\frac{ie^{i\psi_\alpha}\lambda'(\varphi)}{(e^{i\varphi}
 -e^{i\psi_\alpha})\lambda''(\psi_\alpha)\lambda(\varphi)^2}
 = \frac{i\p_{u_\alpha}\varphi_{l+2} e^{i\varphi_{l+2}}}{(e^{i\varphi}-e^{i\varphi_{l+2}})\lambda(\varphi)}
 -\sum_{b=1}^{l+1} \frac{i\p_{u_\alpha}\varphi_{b} e^{i\varphi_{b}}}{(e^{i\varphi}-e^{i\varphi_b})\lambda(\varphi)}.
\label{AM17}\eeq
Also, putting $\varphi=\varphi_{l+2}$ in \eqref{AM17} one gets
\beq \p_{u_{\alpha}} \varphi_{l+2}=- \frac{ie^{i\psi_\alpha}}{(e^{i\varphi_{l+2}}
 -e^{i\psi_\alpha})\lambda''(\psi_\alpha)}. \label{AM18}\eeq
We denote
\beq\begin{array}{l}
 \varpi_1=x_1,\quad \varpi_j=x_j-x_{j-1},\quad
  \varpi_{l+1}=x_{l+1}, \quad \varpi_{l+2}=x_{l+2}
\end{array}\label{AM19}\eeq
for $j=2,\cdots,l$.
With the help of \eqref{AM10}, \eqref{AM16} and \eqref{AM18}, one has
\eqa
\p_{u_{\alpha}} \varpi_{\beta}&& =\frac{e^{i\psi_\alpha}}{2\pi i(e^{i\varphi_{\beta}}
-e^{i\psi_\alpha})\lambda''(\psi_\alpha)}\nn\\
&&
\quad -\frac{m+1}{l+1}\p_{u_{\alpha}} \varpi_{l+1}
 -\frac{m}{l+1}\p_{u_{\alpha}} \varpi_{l+2},\nn \\
 \p_{u_{\alpha}} \varpi_{l+1}&&=\frac{e^{i\psi_\alpha}}{2\pi i(e^{i\varphi_{l+2}}
 -e^{i\psi_\alpha})\lambda''(\psi_\alpha)}, \label{AM21}\\
\p_{u_{\alpha}} \varpi_{l+2}&& =\sum_{b=1}^{l+1}\frac{e^{i\psi_\alpha}}{2\pi m i(e^{i\varphi_{b}}
 -e^{i\psi_\alpha})\lambda''(\psi_\alpha)}
 \nn \\&&\qquad -\frac{(m+1)e^{i\psi_\alpha}}{2\pi m i(e^{i\varphi_{l+2}}
 -e^{i\psi_\alpha})\lambda''(\psi_\alpha)}. \nn
\eeqa
From \eqref{AM2},  \eqref{AM3} and \eqref{AM13} one gets
\beq  \widetilde{\eta}_{\alpha\beta}=
\widetilde{\eta}(\p_{u_\alpha}\p_{u_\beta})=(-1)^{k+1}\frac{\delta_{\alpha\beta}}{\lambda''(\psi_\alpha)}
\label{AM23}\eeq
and
\beq \widetilde{g}_{\alpha\beta}=
\widetilde{g}(\p_{u_\alpha}\p_{u_\beta})=-\frac{\delta_{\alpha\beta}}{u_\alpha\lambda''(\psi_\alpha)}.
\label{AM24}\eeq
It follows from \eqref{df5.10} that  the vector field $e=\frac{\p}{\p y^k}+\frac{\p}{\p y^{k+1}}$ in the coordinates
$a_1,\cdots, a_{l+2}$ coincides with
$e=(-1)^k(\frac{\p}{\p a_{k}}-a_{l+2}\frac{\p}{\p a_{k+1}}).$
We shift
\beq a_k\longmapsto a_k+c,\quad a_{k+1}\longmapsto a_{k+1}-ca_{l+2}\nn\eeq
which produces the corresponding shift
\beq u_i\longmapsto u_i+c, \quad i=1,\cdots,l+2 \nn\eeq
of the critical values. This shift does not change the critical points $\psi_\alpha$
neither the values of the second derivative $\lambda''(\psi_\alpha)$. So
\beq \mathcal{L}_e\widetilde{g}^{\alpha\beta}= \mathcal{L}_e(-u_\alpha \lambda''(\psi_\alpha)\delta_{\alpha\beta})
=(-1)^{k+1}\lambda''(\psi_\alpha)\delta_{\alpha\beta}=\widetilde{\eta}^{\alpha\beta}.\nn\eeq

Finally we want to show that the bilinear form \eqref{AM3} in the coordinates $x_1$, $\cdots$, $x_{l+2}$
coincides with the form defined in \eqref{df3.10} and \eqref{df3.02}. We shall use the following identity
\eqa && \sum_{\alpha=1}^{l+2}
\frac{u_\alpha e^{2i\psi_\alpha}}{(e^{i\varphi_{a}}
 -e^{i\psi_\alpha})(e^{i\varphi_{b}}-e^{i\psi_\alpha})\lambda''(\psi_\alpha)}
 \nn\\
 &=&\sum_{\alpha=1}^{l+2} \res_{\varphi=\psi_\alpha} \frac{\lambda(\varphi)u_\alpha e^{2i\varphi}}
{(e^{i\varphi}-e^{i\varphi_{a}}
)(e^{i\varphi}-e^{i\varphi_{b}})\lambda'(\varphi)} \nn \\
&=& \left\{
\begin{array}{l}
\delta_{ab}-\frac{1}{k}, \quad 1\leq a,b\leq l+1,\\
-\frac{1}{k}, \quad 1\leq a \leq l+1,~~ b=l+2,\\
-1-\frac{1}{k}, \quad a=b=l+2,
\end{array}\right.\label{AM26}
\eeqa
which follows from the explicit form of $\lambda(\varphi)$. With the use of \eqref{AM21},
\eqref{AM24} and \eqref{AM26}, then
\eqa
&&(d\varpi_{l+1},d\varpi_{l+1})
=\sum_{\alpha=1}^{l+2} \frac{1}{\widetilde{g}_{\alpha\alpha}(u)}
\p_{u_{\alpha}}\varpi_{l+1}\p_{u_{\alpha}}\varpi_{l+1} \nn \\
&&\qquad =\frac{1}{4\pi^2}\sum_{\alpha=1}^{l+2}\frac{u_\alpha e^{2i\psi_\alpha}}{(e^{i\varphi_{l+2}}
 -e^{i\psi_\alpha})^2\lambda''(\psi_\alpha)}=-\frac{1}{4\pi^2}\frac{k+1}{k} \nn
\eeqa
and
\eqa
&&(d\varpi_{l+1},d\varpi_{l+2})
=\sum_{\alpha=1}^{l+2} \frac{1}{\widetilde{g}_{\alpha\alpha}(u)}
\p_{u_{\alpha}}\varpi_{l+1}\p_{u_{\alpha}}\varpi_{l+2} \nn \\
&&\qquad =\frac{1}{4\pi^2m}\sum_{\alpha=1}^{l+2}\sum_{b=1}^{l+1}\frac{u_\alpha e^{2i\psi_\alpha}}{(e^{i\varphi_{b}}
 -e^{i\psi_\alpha})(e^{i\varphi_{l+2}}
 -e^{i\psi_\alpha})\lambda''(\psi_\alpha)}\nn \\
 && \qquad \qquad-\frac{m+1}{4\pi^2m}\sum_{\alpha=1}^{l+2}\frac{u_\alpha e^{2i\psi_\alpha}}{(e^{i\varphi_{l+2}}
 -e^{i\psi_\alpha})^2\lambda''(\psi_\alpha)}\nn \\
 &&\qquad = \frac{1}{4\pi^2m}\sum_{b=1}^{l+1}({-\frac{1}{k}})+ \frac{(m+1)(k+1)}{4\pi^2mk}=\frac{1}{4\pi^2}\nn
\eeqa
and
\eqa &&(d\varpi_{l+2},d\varpi_{l+2})=\sum_{\alpha=1}^{l+2} \frac{1}{\widetilde{g}_{\alpha\alpha}(u)}
\p_{u_{\alpha}}\varpi_{l+2}\p_{u_{\alpha}}\varpi_{l+2} \nn \\
&&\qquad =\frac{1}{4\pi^2m^2}\sum_{a,b=1}^{l+1}({\delta_{ab}-\frac{1}{k}})
-\frac{2(m+1)}{4\pi^2m^2}\sum_{a=1}^{l+1}({-\frac{1}{k}})+\frac{(m+1)^2}{4\pi^2m^2}(-1-\frac{1}{k})\nn\\
&&\qquad=-\frac{1}{4\pi^2} \frac{m+1}{m}
\nn\eeqa
and for $b=1,\cdots, l$,
\beq (d\varpi_{b},d\varpi_{l+1})= (d\varpi_{b},d\varpi_{l+2})=0
\nn\eeq
and for $1\leq s,b\leq l$,
\eqa
(d\varpi_{a},d\varpi_{b})
=\sum_{\alpha=1}^{l+2} \frac{1}{\widetilde{g}_{\alpha\alpha}(u)}
\p_{u_{\alpha}}\varpi_{a}\p_{u_{\alpha}}\varpi_{b}=\frac{1}{4\pi^2}(\delta_{ab}-\frac{1}{l+1}). \nn\eeqa
By using \eqref{AM19} and the above explicit forms, it is easy to verify that the
intersection form \eqref{AM3} coincides with \eqref{df3.01} and \eqref{df3.02}. This completes the proof of the theorem.
\end{proof}


\section{Conclusions}\label{sec-7}

We have presented a new class of extended affine Weyl groups $\widetilde{W}^{(k,k+1)}(A_l)$ for $1\leq k <l$.
On the orbit spaces $\mathcal{M}^{(k,k+1)}(A_l)\setminus\{\tilde{y}_{l+1}=0\}\cup\{\tilde{y}_{l+2}=0\}$, we have
shown the existence of Frobenius manifold structures and constructed LG superpotentials for these Frobenius manifold
structures. Besides these, there are still some open problems deserved further study.
\begin{itemize}
  \item Is it possible to obtain an explicit realization of the integrable hierarchies associated with the  Frobenius
manifolds on the orbit space of $\widetilde{W}^{(k,k+1)}(A_l)$?  Perhaps this problem is related to the works in \cite{MST2014} or \cite{BCRR} about
rational reductions of the 2D-Toda hierarchy, or in \cite{CHL2018,L2019} about the finite Toda lattice of
CKP type when $k=1$ and $l=2$.

  \item How about the almost dual structure of the resulting Frobenius manifold structures? (\cite{Du2004,SS2016})
  \item Whether the resulting Frobenius manifolds could be regarded as Frobenius submanifolds in
Strachan'sense (\cite{S2001}) of certain infinite-dimensional Frobenius manifolds (\cite{CDM2011, WD2012, WZ2014}) or not?
\end{itemize}
~\\
\noindent {\bf Acknowledgments.}The author is grateful to Professor Youjin Zhang
for bringing me the attention of this project and helpful discussions. This work is partially
supported by NSFC (No.11671371, No.11871446) and Wu Wen-Tsun Key Laboratory of Mathematics, USTC, CAS.


\end{document}